\newtheorem{theorem}{Theorem}[section]
\newtheorem{corollary}[theorem]{Corollary}
\newtheorem{lemma}[theorem]{Lemma}
\newtheorem*{lma}{Lemma}
\newtheorem*{claim*}{Claim}
\newtheorem{proposition}[theorem]{Proposition}
\theoremstyle{definition}
\newtheorem{definition}[theorem]{Definition}
\newtheorem*{note}{Note}
\newtheorem{remark}{Remark}
\newcommand{\Z}{\mathbb{Z}}
\newcommand{\F}{\mathbb{F}}
\newcommand{\GL}{\operatorname{GL}}
\newcommand{\ed}{\operatorname{ed}}
\newcommand{\syl}{\operatorname{Syl}}
\newcommand{\trdeg}{\operatorname{trdeg}}
\newcommand{\mbf}{\mathbf}
\newcommand{\Id}{\text{Id}}
\newcommand{\divides}{\bigm|}
\newenvironment{theorem*}[2][Theorem]{\begin{trivlist}
\item[\hskip \labelsep {\bfseries #1}\hskip \labelsep {\bfseries #2}]}{\end{trivlist}}
\newenvironment{lemma*}[2][Lemma]{\begin{trivlist}
\item[\hskip \labelsep {\bfseries #1}\hskip \labelsep {\bfseries #2}]}{\end{trivlist}}
\newenvironment{corollary*}[2][Corollary]{\begin{trivlist}
\item[\hskip \labelsep {\bfseries #1}\hskip \labelsep {\bfseries #2}]}{\end{trivlist}}
\begin{document}
\title{\texorpdfstring{The essential $l$-dimension of finite groups of Lie type, $l \neq 2$}{The essential l-dimension of finite groups of Lie type, l neq 2}}
\author{Hannah Knight\thanks{This work was supported in part by NSF Grant Nos. DMS-1811846 and DMS-1944862 and NSF Award No. 2302822.}\\ Mathematics, UCLA, Los Angeles, CA, USA\\ {\color{blue}\href{mailto:hknight1@uci.edu}{hknight1@uci.edu}}} 
\date{}

\maketitle

\vspace{-1cm}

\begin{abstract}

In this paper, we compute the essential $l$-dimension of the finite groups of classical Lie type for odd primes $l$ not equal to the defining prime, specifically the linear groups, the symplectic groups, the orthogonal groups, and the unitary groups, and the simple factors in their Jordan-H\"{o}lder series. 

\end{abstract}



\section{Introduction}

The goal of this paper is to compute the essential $l$-dimension of the finite groups of classical Lie type, for odd primes $l$ not equal to the defining prime. Fix a field $k$. The essential dimension of a finite group $G$, denoted $\ed_k(G)$, is the smallest number of algebraically independent parameters needed to define a Galois $G$-algebra over any field extension $F/k$ (or equivalently $G\text{-torsors over }\text{Spec}F$). In other words, the essential dimension of a finite group $G$ is the supremum taken over all field extensions $F/k$ of the smallest number of algebraically independent parameters needed to define a Galois $G$-algebra over $F$.  The essential $l$-dimension of a finite group, denoted $\ed_k(G,l)$, is similar except that before taking the supremum, we allow finite extensions of $F$ of degree prime to $l$ and take the minimum of the number of parameters needed.  In other words, the essential $l$-dimension of a finite group is the supremum taken over all fields $F/k$ of the smallest number of algebraically independent parameters needed to define a Galois $G$-algebra over a field extension of $F$ of degree prime to $l$.  See Section \ref{edbackground} for more formal definitions. See also \cite{BR} and \cite{KM} for more detailed discussions. For a discussion of some interesting applications of essential dimension and essential $p$-dimension, see \cite{Reich}.

What is the essential dimension of the finite simple groups? This question is quite difficult to answer. A few results for small groups (not necessarily simple) have been proven. For example, it is known that $\ed_k(S_5) = 2$, $\ed_k(S_6) = 3$ for $k$ of characteristic not $2$ \cite{BF}, and $\ed_k(A_7) = \ed_k(S_7) = 4$ in characteristic $0$ \cite{Dun}.  It is also known that for $k$ a field of characteristic $0$ containing all roots of unity, $\ed_k(G) = 1$ if and only if $G$ is isomorphic to a cyclic group $\Z/n\Z$ or a dihedral group $D_m$ where $m$ is odd (\cite{BR}, Theorem 6.2). Various bounds have also been proven. See \cite{BR}, \cite{Mer}, \cite{Reich},\cite{Mor}, among others. For a nice summary of the results known in 2010, see \cite{Reich}.

We can find a lower bound to this question by considering the corresponding question for essential $p$-dimension. In my previous article (\cite{Kni}), I calculated the essential $p$-dimension of the split finite quasi-simple groups of classical Lie type at the defining prime $p$. In this article, we calculate the essential $l$-dimension of the groups at a prime $l$, where $l \neq 2$ and $l \neq p$ (where $p$ is the defining prime).

\begin{definition}
Let $v_l(n)$ denote the largest integer $i$ such that $l^i \divides n$. Let $\zeta_n$ denote a primitive $n$th root of unity. 
\end{definition}

\begin{theorem}\label{mainthm} 

\begin{enumerate}[(1)] Let $p$ be a prime, $q = p^r$, and $l$ a prime with $l \neq 2,p$.  Let $k$ be a field with $\text{char } k \neq l$. Let $d$ be the smallest positive integer such that $l \divides q^d - 1$, $s = v_l(q^d-1)$, and $n_0 = \lfloor \frac{n}{d} \rfloor$. Then 

\item (Theorem \ref{GLn})
$$\ed_k(GL_n(\F_q),l) = n_0[k(\zeta_{l^s}):k(\zeta_l)]$$

\item (Theorem \ref{SLn})  
$$\ed_k(SL_n(\F_q),l) = \begin{cases}
 \ed_k(GL_{n-1}(\F_q),l), &l \divides q - 1, \text{ } l \nmid n \\
 \ed_k(GL_n(\F_q),l), &l \nmid q - 1 \text{ or } l \divides n
 \end{cases}$$

\item (Theorem \ref{PGLn})
$$\ed_k(PGL_n(\F_q),l) = \begin{cases} \ed_k(GL_n(\F_q),l)&l \nmid q - 1\\
\ed_k(GL_{n-1}(\F_q),l), &l \divides q - 1 \text{ and } l \nmid n\\
l^{v_l(n)}(n-l^{v_l(n)})[k(\zeta_{l^s}):k(\zeta_l)], &l \divides q - 1 \text{ and } l \divides n \text{ and } n \neq l^t\\
l^{2t-1}[k(\zeta_{l^s}):k(\zeta_l)], &l \divides q - 1 \text{ and } n = l^t\\
 \end{cases}$$

\item (Theorem \ref{PSLn}) Let $n' \divides n$ and let $G = SL_n(\F_q)/\{x\text{Id} : x \in \F_q, \text{ } x^{n'} = 1\}$. Let $v = \min(v_l(n'),s)$. Then if $l \nmid q-1$ or $l \nmid n'$, then $\ed_k(G,l) = \ed_k(SL_n(\F_q),l)$. And if $l \divides q-1$, $l \divides n'$, then 
$$\ed_k(G,l) = \begin{cases} 
2, &l = n' = n = 3, \text{ }s = 1\\
\ed_k(PGL_n(\F_q),l), &\text{otherwise }
\end{cases}.$$
Note that for $n' = n,$ $G = PSL_n(\F_q).$

\item (Theorem \ref{PSp})
\begin{align*} 
\ed_k(PSp(2n,q),l) = \ed_k(Sp(2n,q),l) &= \begin{cases} \ed_k(GL_{2n}(\F_q),l), &d \text{ even} \\ 
\ed_k(GL_n(\F_q),l), &d \text{ odd} \end{cases}
\end{align*}

\item (Theorem \ref{On})
{\small \begin{align*}
&\ed_k(P\Omega^\epsilon(n,q),l)\\
&= \ed_k(O^\epsilon(n,q),l)\\
&= \begin{cases} \ed_k(GL_m(\F_q),l), &n = 2m+1, \text{ d odd}\\
&\text{or } n = 2m, d \text{ odd}, \epsilon = + \\
\ed_k(GL_{m-1}(\F_q),l), &n = 2m, d \text{ odd}, \epsilon = - \\
\ed_k(GL_{2m}(\F_q),l), &n = 2m+1, \text{ d even}\\
&\text{or } n = 2m, \text{ d even}, \epsilon = +, n_0 \text{ even}\\
&\text{or } n = 2m, d \text{ even}, \epsilon = -, n_0 \text{ odd}\\
ed_k(GL_{2m-2}(\F_q),l), &n = 2m, d \text{ even}, \epsilon = +, n_0 \text{ odd}\\
&\text{or } n = 2m, d \text{ even}, \epsilon = -, n_0 \text{ even}\\
\end{cases}
\end{align*}}

\item (Theorem \ref{Un})
$$\ed_k(U(n,q^2),l) = \begin{cases}\ed_k(GL_n(\F_{q^2}),l), &d = 2 \pmod{4}\\
\ed_k(GL_{\lfloor \frac{n}{2} \rfloor}(\F_{q^2}),l), &d \neq 2 \pmod{4}\end{cases}
$$

\item (Theorem \ref{SUn})
$$\ed_k(SU(n,q^2),l) = \begin{cases} \ed_k(U(n,q^2),l), &l \nmid q+1\\
\ed_k(SL_n(\F_{q^2}),l), &l \divides q + 1\end{cases}$$

\item (Theorem \ref{PSUn}) 
$$\ed_k(PSU(n,q^2),l) = \begin{cases} \ed_k(SU(n,q^2),l), &l \nmid n \text{ or } l \nmid q+1\\
\ed_k(PSL_n(\F_{q^2}),l), &l \divides n, \text{ } l \divides q + 1\end{cases}$$

\end{enumerate}
\end{theorem}

\begin{remark}\label{remark1} Duncan and Reichstein calculated the essential $p$-dimension of the pseudo-reflection groups. These groups overlap with the groups above in a few small cases. See the appendix (\ref{App1}) for the overlapping cases .
\end{remark}

\begin{note} When calculating essential $l$-dimension we can assume without loss of generality that $k$ contains a primitive $l$-th root of unity since adjoining an $l$-th root of unity gives an extension of degree prime to $l$.  However, this is not the case for $l^s$. For example, the cyclotomic polynomial for adjoining a $9$-th root of unity is $x^6 + x^3 + 1$, which has degree divisible by $3$. \end{note}

\noindent \textbf{Acknowledgements:} I would like to thank Zinovy Reichstein and Jesse Wolfson for their kind mentorship and invaluable help. I am also grateful to Hadi Salmasian, Federico Scavia, and Jean-Pierre Serre for very helpful comments on a draft.

\bigskip
 
\section{Essential Dimension and Representation Theory Background}\label{edbackground}

For completeness, we recall the relevant background. Fix a field $k$.  Let $G$ be a finite group, $p$ a prime. 

\begin{definition} Let $T: \text{Fields}/k \to \text{Sets}$ be a functor. Let $F/k$ be a field extension, and $t \in T(F)$. The\emph{ essential dimension of} $\mathit{t}$ is 
$$\ed_k(t) = \min_{F' \subset F \text{ s.t. } t \in Im(T(F') \to T(F))} \trdeg_k(F').$$\end{definition}

\begin{definition} Let $T: \text{Fields}/k \to \text{Sets}$ be a functor. The \emph{essential dimension of} $\mathit{T}$ is
$$\ed_{k}(T) = \sup_{t \in T(F), F/k \in \text{Fields}/k} \ed_k(t).$$\end{definition} 

\begin{definition} For $G$ be a finite group, let $$H^1(-;G):\text{Fields}/k \to \text{Sets}$$ be defined by $$H^1(-;G)(F/k) = \{\text{the isomorphism classes of } G\text{-torsors over }\text{Spec}F \}.$$ 
\end{definition}

\begin{definition} The \emph{essential dimension of} $\mathit{G}$ is
$$\ed_k(G) = \ed_k(H^1(-;G)).$$ \end{definition}

\begin{definition} Let $T: \text{Fields}/k \to \text{Sets}$ be a functor. Let $F/k$ be a field extension, and $t \in T(F)$. The \emph{essential} $\mathit{p}$\emph{-dimension of} $\mathit{t}$ is 
$$\ed_k(t,p) = \min \trdeg_k(F'')$$
where the minimum is taken over all
\begin{align*}
F'' \subset F' \text{ a finite extension}, \text{ with } F \subset F'\\
[F':F] \text{ finite } \text{ s.t. } p \nmid [F':F] \text{ and }\\
\text{the image of } t \text{ in } T(F') \text{ is in } \text{Im}(T(F'') \to T(F'))
\end{align*}
\end{definition}

\begin{note} $\ed_k(t,p) = \min_{F \subset F', p \nmid [F':F]} \ed_k(t|_{F'}).$
\end{note}

\begin{definition} Let $T: \text{Fields}/k \to \text{Sets}$ be a functor. The \emph{essential} $\mathit{p}$\emph{-dimension of} $\mathit{T}$ is
$$\ed_{k}(T,p) = \sup_{t \in T(F), F/k \in \text{Fields}/k} \ed_k(t,p).$$\end{definition}

\begin{definition} The \emph{essential} $\mathit{p}$\emph{-dimension of} $\mathit{G}$ is 
$$\ed_k(G,p) = \ed_k(H^1(-;G),p).$$ \end{definition}

Let $\syl_p(G)$ denote the set of Sylow $p$-subgroups of $G$. 

\begin{lemma}\label{Lempsyl} Let $S \in \syl_p(G)$. Then $\ed_k(G,p) = \ed_k(S,p).$  \end{lemma}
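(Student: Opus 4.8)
The plan is to prove the two inequalities $\ed_k(G,p) \le \ed_k(S,p)$ and $\ed_k(G,p) \ge \ed_k(S,p)$ separately, where $S \in \syl_p(G)$. Both directions rest on a standard comparison of the torsor functors $H^1(-;S)$ and $H^1(-;G)$ induced by the inclusion $S \hookrightarrow G$, together with the fact that $[G:S]$ is prime to $p$.

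For the inequality $\ed_k(G,p) \ge \ed_k(S,p)$, I would use the restriction-corestriction principle. Given a field extension $F/k$ and a $G$-torsor $t$ over $\Spec F$, one knows that after passing to a finite extension $F'/F$ of degree $[G:S]$ (prime to $p$) the torsor $t|_{F'}$ admits a reduction of structure group to $S$: concretely, $t$ defines a class in $H^1(F;G)$, and the map $H^1(F';S) \to H^1(F';G)$ is surjective onto the classes that split over a separable closure, in particular it hits $t|_{F'}$; equivalently the quotient scheme $t/S$ is a finite $F$-scheme of degree $[G:S]$ prime to $p$, so it has a point over some such $F'$. Thus every $G$-torsor, after a prime-to-$p$ extension, lifts to an $S$-torsor, and tracking transcendence degrees of fields of definition shows $\ed_k(t,p)$ is bounded by the essential $p$-dimension of the corresponding $S$-torsor, hence by $\ed_k(S,p)$. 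Taking the supremum over all $t$ gives $\ed_k(G,p) \ge \ed_k(S,p)$. (One must be a little careful that the field of definition of the lifted $S$-torsor can be arranged to contain a field of definition witnessing $\ed$ of the original $G$-torsor, but this is routine.)

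For the reverse inequality $\ed_k(G,p) \le \ed_k(S,p)$, I would push an $S$-torsor forward to a $G$-torsor via $H^1(-;S) \to H^1(-;G)$ and use that this map has "degree prime to $p$" in the sense that composing with restriction back down multiplies by $[G:S]$. Precisely, given $s \in H^1(F;S)$ with image $t \in H^1(F;G)$, and a subfield $F_0 \subseteq F$ over which $t$ is defined, one restricts $t$ to a prime-to-$p$ extension over which it lifts back to an $S$-torsor $s'$; the class $s'$ and $s$ need not agree, but the multiplicity-$[G:S]$ relation forces $\ed_k(s,p) \le \ed_k(t,p) \le \ed_k(G,p)$ after the allowed prime-to-$p$ base changes. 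Taking the supremum over $s$ yields $\ed_k(S,p) \le \ed_k(G,p)$.

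The main obstacle is the bookkeeping in the restriction-corestriction step: one needs the field-of-definition arguments to be compatible with the prime-to-$p$ extensions allowed in the definition of essential $p$-dimension, i.e. to check that the minimization over $F'' \subseteq F'$ with $p \nmid [F':F]$ interacts correctly when one first enlarges $F$ by a prime-to-$p$ extension to get a reduction of structure group and then tries to descend. Since this lemma is quoted from the literature (it is the standard fact that $\ed(G,p) = \ed(S,p)$ for $S$ a Sylow $p$-subgroup, going back to the foundational work on essential $p$-dimension), I would in practice cite it — e.g. \cite{KM} or \cite{MR} — rather than reproduce the argument, but the sketch above indicates the route a self-contained proof would follow.
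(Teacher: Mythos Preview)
The paper does not actually prove this lemma: it is stated as a standard fact with no proof and no citation, and the immediately following Lemma~\ref{sylp} (a mild strengthening) is simply attributed to \cite{Kni}. So your instinct in the final paragraph --- that in practice one just cites the literature --- is exactly what the paper does, and there is no ``paper's own proof'' to compare against.

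That said, two comments on your sketch. First, you have the two arguments attached to the wrong inequalities: the lifting argument (every $G$-torsor reduces to $S$ after a prime-to-$p$ extension, since $t/S$ has degree $[G:S]$) yields $\ed_k(t,p)\le \ed_k(S,p)$ and hence $\ed_k(G,p)\le \ed_k(S,p)$, not $\ge$; and your second paragraph is really the argument for $\ed_k(S,p)\le \ed_k(G,p)$. Second, that second paragraph is too vague as written: the phrase ``the multiplicity-$[G:S]$ relation forces \ldots'' is not an argument. The clean way to finish is to note that if $t = s\times^S G$ descends to $t_0$ over a small field $F_0$, then the $S$-reduction $s$ corresponds to an $F$-point of the finite \'etale $F_0$-scheme $t_0/S$ of degree $[G:S]$; its residue field $F_0'$ is a prime-to-$p$ extension of $F_0$ contained in $F$, and $s$ is already defined over $F_0'$. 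This gives $\ed_k(s,p)\le \trdeg_k(F_0)$ directly, without any ``corestriction'' heuristic.
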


\begin{lemma}[\cite{Kni}, Corollary 2.11]\label{sylp} If $k_1/k$ a finite field extension of degree prime to $p$, $S \in \syl_p(G)$. Then $\ed_k(G,p) = \ed_k(S,p) = \ed_{k_1}(S,p).$ \end{lemma}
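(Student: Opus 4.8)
The plan is to reduce everything to the single equality $\ed_k(S,p)=\ed_{k_1}(S,p)$, since the first equality $\ed_k(G,p)=\ed_k(S,p)$ is exactly Lemma~\ref{Lempsyl}. It is cleanest to prove the a priori more general statement that $\ed_k(G',p)=\ed_{k_1}(G',p)$ for an arbitrary finite group $G'$ and an arbitrary finite extension $k_1/k$ of degree prime to $p$; the lemma then follows by taking $G'=S$, or alternatively by taking $G'=G$ and invoking Lemma~\ref{Lempsyl} over both $k$ and $k_1$ (being a Sylow $p$-subgroup is a group-theoretic condition, independent of the base field). So I would fix $G'$ and prove two inequalities.

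For ``$\le$'', i.e.\ $\ed_{k_1}(G',p)\le\ed_k(G',p)$, I would use the general principle that essential $p$-dimension does not grow under base field extension. Given $F/k_1$ and $t\in H^1(F;G')$, regard $F$ as an extension of $k$ and choose a prime-to-$p$ extension $F'/F$ and a subfield $E$ with $k\subseteq E\subseteq F'$, $t|_{F'}\in\operatorname{Im}(H^1(E;G')\to H^1(F';G'))$, and $\trdeg_k(E)=\ed_k(t,p)$. The compositum $E\cdot k_1$ formed inside $F'$ contains $k_1$, still carries $t|_{F'}$, and satisfies $\trdeg_{k_1}(E\cdot k_1)=\trdeg_k(E)$ because both $k_1/k$ and $(E\cdot k_1)/E$ are algebraic. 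Hence $\ed_{k_1}(t,p)\le\ed_k(t,p)$, and taking suprema gives the inequality.

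For ``$\ge$'', i.e.\ $\ed_k(G',p)\le\ed_{k_1}(G',p)$, I would use that $[k_1:k]$ is prime to $p$. We may assume $k_1/k$ is separable, the purely inseparable part being harmless because a purely inseparable extension induces an equivalence of small \'etale sites and hence changes neither $H^1(-;G')$ nor the relevant transcendence degrees. So assume $k_1/k$ is separable of degree $n$ prime to $p$. Given $F/k$ and $t\in H^1(F;G')$, the \'etale $F$-algebra $F\otimes_k k_1$ decomposes as a product of fields $\prod_i L_i$ with $\sum_i[L_i:F]=n$ prime to $p$, so some factor $L$ has $[L:F]$ prime to $p$. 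Then $L$ is a field extension of $k_1$; let $t_L\in H^1(L;G')$ be the image of $t$. Because a tower of prime-to-$p$ extensions is again prime to $p$, and because $\trdeg_k(E)=\trdeg_{k_1}(E)$ for any $E\supseteq k_1$ (as $k_1/k$ is algebraic), any witness $(E,L')$ for $\ed_{k_1}(t_L,p)$ — with $L'/L$ prime to $p$ and $k_1\subseteq E\subseteq L'$ — is also a valid witness for $\ed_k(t,p)$, since then $L'/F$ is prime to $p$ and $E\supseteq k$. Hence $\ed_k(t,p)\le\ed_{k_1}(t_L,p)\le\ed_{k_1}(G',p)$, and taking the supremum over all $F$ and $t$ finishes the argument.

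I expect the main obstacle to be the ``$\ge$'' direction, and specifically the two points where the prime-to-$p$ hypothesis is genuinely used: the existence of a field factor $L$ of $F\otimes_k k_1$ with $[L:F]$ prime to $p$, and the transitivity of ``prime-to-$p$'' along towers, so that a witness $L'/L$ upgrades to a witness $L'/F$. Everything else is formal bookkeeping with transcendence degrees. The separable/inseparable reduction should be stated but is routine, and it can be bypassed altogether in this paper since $\operatorname{char} k\neq p$ in all applications, whence $k_1/k$ is automatically separable and $F\otimes_k k_1$ is automatically \'etale.
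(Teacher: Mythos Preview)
The paper does not actually prove this lemma; it is stated with a citation to \cite{Kni}, Corollary~2.11, and no argument is given here. Your proposal supplies a correct direct proof of the general fact $\ed_k(G',p)=\ed_{k_1}(G',p)$ for $[k_1:k]$ prime to $p$, via the two standard inequalities: base change for ``$\le$'', and for ``$\ge$'' producing a prime-to-$p$ field factor $L$ of $F\otimes_k k_1$ and transporting a $k_1$-witness for $t_L$ back to a $k$-witness for $t$ using transitivity of prime-to-$p$ degrees and invariance of transcendence degree under the algebraic extension $k_1/k$.

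One small slip: your closing parenthetical asserts that $\operatorname{char} k\neq p$ forces $k_1/k$ to be separable. That is false in general---inseparability is governed by $\operatorname{char} k$, not by the auxiliary prime $p$, so if $\operatorname{char} k=\ell\neq p$ and $k$ is imperfect, $k_1/k$ can still be inseparable while having degree prime to $p$. This does not damage your proof, since you already handled the purely inseparable part correctly via the \'etale-site (equivalently, absolute Galois group) argument. In the paper's actual applications the relevant extension is $k(\zeta_l)/k$ with $\operatorname{char} k\neq l$, and that extension \emph{is} separable---but because cyclotomic extensions in characteristic $\neq l$ are separable, not for the reason you stated.
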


\begin{corollary}\label{rootofunity} Let $G$ be a finite group, $k$ a field of characteristic $\neq p$, $S \in \syl_p(G)$, $\zeta$ a primitive $p$-th root of unity, then
$$\ed_k(G,p) = \ed_{k(\zeta)}(S,p).$$ \end{corollary}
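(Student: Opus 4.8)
The plan is to chain together Lemma \ref{Lempsyl} and Lemma \ref{sylp}. First I would apply Lemma \ref{Lempsyl} to get $\ed_k(G,p) = \ed_k(S,p)$, reducing the problem from $G$ to its Sylow $p$-subgroup $S$. The remaining task is then to show that passing from the base field $k$ to $k(\zeta)$ does not change the essential $p$-dimension of $S$, i.e. that $\ed_k(S,p) = \ed_{k(\zeta)}(S,p)$.

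For this I would invoke Lemma \ref{sylp} with $k_1 = k(\zeta)$. The one thing to check is that the hypothesis of that lemma is met, namely that $k(\zeta)/k$ is a finite extension of degree prime to $p$. Finiteness is clear since $\zeta$ is a root of the cyclotomic-type polynomial $1 + x + \cdots + x^{p-1}$ (when $\zeta \notin k$ already; if $\zeta \in k$ there is nothing to prove). For the degree: $[k(\zeta):k]$ divides $\deg(1 + x + \cdots + x^{p-1}) = p-1$, which is coprime to $p$. (One must also note the case $\operatorname{char} k = p$, where $\zeta = 1$ and $k(\zeta) = k$; but the statement presumes $\operatorname{char} k \neq p$, so $\zeta$ is a genuine primitive $p$-th root of unity — although even in characteristic $p$ the equality holds trivially.) Thus Lemma \ref{sylp} applies and gives $\ed_k(S,p) = \ed_{k(\zeta)}(S,p)$.

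Combining the two displayed equalities yields $\ed_k(G,p) = \ed_k(S,p) = \ed_{k(\zeta)}(S,p)$, which is exactly the claimed identity. There is no real obstacle here — the corollary is an immediate formal consequence of the two preceding lemmas; the only point requiring a word of justification is the coprimality of $[k(\zeta):k]$ to $p$, which follows from the degree bound $[k(\zeta):k] \mid p-1$.
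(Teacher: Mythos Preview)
Your proposal is correct and matches the paper's intended argument: the corollary is stated without proof precisely because it follows immediately from Lemma~\ref{sylp} applied with $k_1 = k(\zeta)$, using that $[k(\zeta):k]$ divides $p-1$. Note that your invocation of Lemma~\ref{Lempsyl} is actually redundant, since Lemma~\ref{sylp} already includes the equality $\ed_k(G,p) = \ed_k(S,p)$ in its conclusion; a single application of Lemma~\ref{sylp} suffices.
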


\begin{theorem}\label{KM4.1} [Karpenko-Merkurjev \cite{KM}, Theorem 4.1] Let $G$ be a $p$-group, $k$ a field with $\text{char } k \neq p$ containing a primitive $p$th root of unity. Then $\ed_k(G,p) = \ed_k(G)$ and $\ed_k(G,p)$ coincides with the least dimension of a faithful representation of $G$ over $k$.
\end{theorem}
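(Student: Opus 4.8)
The plan is to prove the two inequalities $\ed_k(G) \le \min_\rho \dim\rho$ and $\min_\rho \dim\rho \le \ed_k(G,p)$, where $\rho$ ranges over faithful $k$-representations of $G$; together with the trivial inequality $\ed_k(G,p) \le \ed_k(G)$ these force all three quantities to coincide, giving both assertions at once. The upper bound is the easy half: given a faithful representation $G \hookrightarrow \GL(V)$ with $\dim V = n$, the linear action of the finite group $G$ on $V$ is generically free, so by the standard versality/compression argument $\ed_k(G) \le \dim V = n$; minimizing over $\rho$ gives $\ed_k(G) \le \min_\rho\dim\rho$. Everything then reduces to the lower bound $\ed_k(G,p) \ge \min_\rho\dim\rho$, which is the real content.

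First I would reduce faithfulness to the center. Set $C = \Omega_1(Z(G)) = \{g \in Z(G) : g^p = 1\}$, an elementary abelian group which, using the primitive $p$th root of unity in $k$, I identify with $\mu_p^r$ where $r = \dim_{\F_p} C$. Since every nontrivial normal subgroup of a $p$-group meets $Z(G)$, and every nontrivial subgroup of $Z(G)$ meets $C$, a representation $\rho = \bigoplus_i \rho_i$ is faithful if and only if its restriction to $C$ is faithful. By Schur's lemma each irreducible $\rho_i$ restricts on the central $C$ to a multiple of a single character $\chi_i \in C^\vee$, and $\rho$ is faithful on $C$ exactly when the $\chi_i$ span $C^\vee$. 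A minimal-cardinality spanning set is a basis, so extracting a basis shows
$$\min_\rho \dim\rho = \min_{\{\chi_1,\dots,\chi_r\}\text{ a basis of } C^\vee} \sum_{i=1}^r d(\chi_i), \qquad d(\chi) := \min\{\dim\tau : \tau \text{ irreducible with central character } \chi\}.$$
This recasts the target as a combinatorial minimum over bases of $C^\vee$.

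For the lower bound itself I would use the central extension $1 \to C \to G \to \bar G \to 1$ together with a versal $\bar G$-torsor over a field $F \supseteq k$. The obstruction to lifting this torsor to a $G$-torsor lives in $H^2(F, C) = H^2(F,\mu_p)^{\oplus r} = (\operatorname{Br}(F)[p])^{\oplus r}$, and pushing forward along the basis $\chi_1,\dots,\chi_r$ produces $r$ central simple $F$-algebras $A_1,\dots,A_r$ of exponent $p$. The key points are that, for a suitably generic torsor, $\operatorname{ind}(A_i) = d(\chi_i)$ and the classes $[A_i]$ are $\F_p$-independent in $\operatorname{Br}(F)[p]$. A gerbe/exact-sequence estimate of Karpenko--Merkurjev type then bounds $\ed_k(G,p)$ below by the canonical $p$-dimension of the product of Severi--Brauer varieties plus the rank of $C$:
$$\ed_k(G,p) \;\ge\; \operatorname{cdim}_p\Big(\textstyle\prod_{i=1}^r \mathrm{SB}(A_i)\Big) + r.$$
By Karpenko's incompressibility theorem a Severi--Brauer variety of a $p$-primary division algebra $A$ is $p$-incompressible, so $\operatorname{cdim}_p(\mathrm{SB}(A)) = \operatorname{ind}(A) - 1$, and for independent classes the product is $p$-incompressible with canonical dimension the sum of the individual ones; hence the right-hand side equals $\sum_i(d(\chi_i) - 1) + r = \sum_i d(\chi_i)$. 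Minimizing over bases recovers exactly $\min_\rho\dim\rho$.

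The main obstacle is the deep geometric input: Karpenko's incompressibility theorem for products of Severi--Brauer varieties, whose proof rests on computations in the Chow groups of these varieties together with the action of Steenrod operations, is precisely what makes the lower bound nontrivial and is far harder than any of the surrounding bookkeeping. The accompanying technical burden is the matching step --- arranging a versal torsor so that $\operatorname{ind}(A_i) = d(\chi_i)$ and verifying that the classes $[A_i]$ are genuinely $\F_p$-independent, so that Karpenko's product formula applies and the numerics collapse to $\sum_i d(\chi_i)$. Once these are in place, the combinatorial identity for $\min_\rho\dim\rho$ established in the second step closes the argument.
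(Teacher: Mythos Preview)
The paper does not prove this theorem at all: it is quoted verbatim from Karpenko--Merkurjev \cite{KM} as an external input, with no argument given. So there is no ``paper's own proof'' to compare against; the authors simply invoke the result as a black box and use it (together with Lemma~\ref{BMKS3.5}) to reduce the computation of $\ed_k(G,l)$ to finding the minimal faithful representation of a Sylow $l$-subgroup.

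That said, your outline is a faithful high-level sketch of the actual Karpenko--Merkurjev argument. The reduction of faithfulness to a basis of $\widehat{\Omega_1(Z(G))}$ and the formula $\min_\rho\dim\rho=\min_{\text{bases}}\sum d(\chi_i)$ are exactly their Remark~4.4/Lemma~4.5 setup; the gerbe inequality and the identification of the right-hand side with $\sum(\operatorname{ind}A_i-1)+r$ via incompressibility of products of $p$-primary Severi--Brauer varieties is their Theorems~3.1 and~4.1. The one place where your wording is slightly off is the framing ``versal $\bar G$-torsor then lift'': in \cite{KM} one works directly with a versal $G$-torsor $E$ over a field $F$, pushes it to $\bar G$, and reads the gerbe as the obstruction class; the representation-theoretic matching $\operatorname{ind}(A_{\chi})=d(\chi)$ is then Theorem~4.4 there, proved by comparing the twisted group algebra to the endomorphism algebra of an irreducible with central character $\chi$. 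You correctly flag Karpenko's Chow-group/Steenrod computation as the hard core; everything else in your sketch is accurate bookkeeping.
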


\noindent The Karpenko-Merkurjev Theorem allows us to translate the question formulated in terms of extensions and transcendance degree into a question of representation theory of Sylow $p$-subgroups.  

\begin{definition}\label{Def3.1} Let $H$ be an abelian $p$-group. Define $H[p]$ to be the largest elementary abelian $p$-group contained in $H$, i.e. $H = \{z \in H : z^p = 1\}$. \end{definition}

\begin{definition} For $G$ an abelian group, $k$ a field, let $\widehat{G} = \text{Hom}(G, k_\text{sep}^\times)$, where $k_\text{sep}$ denotes a separable closure of $k$ in $\overline{k}$. \end{definition}

\begin{remark} Note that if $G$ is elementary abelian $p$-group and $k$ contains $p$-th roots of unity, then $\widehat{G}$ is simply the characters of $G$.
\end{remark}

\begin{remark} Note that for $G = (\Z/l^s\Z)^n$, $\widehat{G} = \text{Hom}(G,k(\zeta_{l^s})^\times)$.
\end{remark}

\begin{definition} For an abelian $p$-group $H$, let $\text{rank}(H)$ denote the rank of $H[p]$ as a vector space over $\F_p$.
\end{definition}

The next two lemmas are due to Meyer-Reichstein \cite{MR} and reproduced in \cite{BMS}.

\begin{lemma}[\cite{MR}, Lemma 2.3; \cite{BMS}, Lemma 3.5]\label{BMKS3.5} Let $k$ be a field with $\text{char } k \neq p$ containing $p$-th roots of unity. Let $H$ be a finite $p$-group and let $\rho$ be a faithful representation of $H$ of minimal dimension. Let $C = Z(H)$. Then $\rho$ decomposes as a direct sum of exactly $r = \text{rank}(C)$ irreducible representations
$$\rho = \rho_1 \oplus \ldots \oplus \rho_r.$$ and 
if $\chi_i$ are the central characters of $\rho_i$, then $\{\chi_i|_{C[p])}\}$ is a basis for $\widehat{C[p]}$ over $k$. \end{lemma}

\begin{lemma}[\cite{MR}, Lemma 2.3; \cite{BMS}, Lemma 3.4]\label{BMKS3.4} Let $k$ be a field with $\text{char } k \neq p$ containing $p$-th roots of unity. Let $H$ be a finite $p$-group and let $(\rho_i: H \to GL(V_i))_{1 \leq i \leq n}$ be a family of irreducible representations of $H$ with central characters $\chi_i$. Let $C = Z(H)$. Suppose that $\{\chi_i|_{C[p]} : 1 \leq i \leq n\}$ spans $\widehat{C[p]}$. Then $\bigoplus_i \rho_i$ is a faithful representation of $H$.
\end{lemma}

\noindent Lemmas \ref{BMKS3.5} and \ref{BMKS3.4} allow us to translate a question of analyzing faithful representations into a question of analyzing irreducible representations. We will need a few more lemmas for the proof.

\begin{definition} For $l$ a prime, $n \in \Z$, let $\mu_l(n)$ denote the the largest integer $d$ such that $l^d \leq n$. 
\end{definition}

\begin{lemma}\label{Pl(Sn)} Let $\sigma_{i}^{j}$ be the permutation which permutes the $i$th set of $l$ blocks of size $l^{j-1}$. Then 
$$\langle \{\sigma_i^j\}_{1 \leq j \leq \mu_l(n), 1 \leq i \leq \lfloor \frac{n}{l^{j}} \rfloor} \rangle \in \syl_l(S_n).$$ Let $P_l(S_n)$ denote this particular Sylow $l$-subgroup of $S_n$.
\end{lemma}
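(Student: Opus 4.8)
The statement is essentially the classical description of Sylow $l$-subgroups of symmetric groups as iterated wreath products (Kaloujnine's theorem), but stated concretely via the generators $\sigma_i^j$. Let me think through how to prove it cleanly.

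First, compute the order of a Sylow $l$-subgroup of $S_n$. By Legendre's formula, $\nu_l(n!) = \sum_{j \geq 1} \lfloor n/l^j \rfloor$. So I need to show the group generated by the $\sigma_i^j$ has order exactly $l^{\sum_j \lfloor n/l^j\rfloor}$ and is a subgroup (automatic) — actually showing it's a Sylow $l$-subgroup just requires showing it's an $l$-group of the right order, equivalently that the generators produce a group of exactly that order.

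The approach: induction on $n$, or more transparently, reduce to the case $n = l^m$ via the base-$l$ decomposition. Write $n = \sum a_i l^i$; then $S_n$ contains $\prod S_{l^i}^{a_i}$ acting on the corresponding block decomposition, and $P_l(S_n)$ as defined is the internal direct product of copies of $P_l(S_{l^i})$. The generators $\sigma_i^j$ restricted to a single block of size $l^m$ give precisely the generators for $P_l(S_{l^m})$.

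So the crux is $n = l^m$. Here I'd argue by induction on $m$ that $P_l(S_{l^m})$ is the iterated wreath product $\Z/l \Wr \cdots \Wr \Z/l$ ($m$ times), which has order $l^{(l^m-1)/(l-1)} = l^{1 + l + \cdots + l^{m-1}}$, matching $\nu_l((l^m)!)$. Concretely: the elements $\sigma_i^m$ for $i = 1$ (there's only one, permuting the $l$ top-level blocks of size $l^{m-1}$) generate a $\Z/l$ on top; the elements $\sigma_i^j$ for $j \leq m-1$ split into $l$ groups according to which top-level block they live in, each group being a copy of $P_l(S_{l^{m-1}})$, and these $l$ copies are permuted cyclically by $\sigma_1^m$. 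That exhibits the group as $P_l(S_{l^{m-1}}) \Wr \Z/l$, and the order follows by induction.

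The main obstacle — or really the only thing requiring care — is verifying the wreath product structure rigorously: namely that the $l$ copies of $P_l(S_{l^{m-1}})$ inside the bottom layer genuinely commute with each other (disjoint supports) and intersect trivially, and that $\sigma_1^m$ conjugates the $i$th copy to the $(i+1)$st. This is a bookkeeping argument about how $\sigma_1^m$ acts on block indices. An alternative, cleaner route avoids explicit generators: identify the group abstractly with $\operatorname{Aut}$ of the rooted $l$-ary tree of depth $m$ (the "block structure" tree), note $|\operatorname{Aut}| = $ the iterated wreath product order by a standard recursion, and check the $\sigma_i^j$ generate this automorphism group — which itself reduces to the fact that $S_l$ is generated by a single $l$-cycle together with the obvious inductive step. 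I would likely present the direct computation of the order together with the wreath-product recursion, cite Kaloujnine for the classical statement, and keep the index bookkeeping terse.
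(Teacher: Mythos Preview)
Your proposal is correct and follows a standard route (essentially Kaloujnine's description), but it is organized differently from the paper's proof. The paper inducts directly on $n$ by peeling off the \emph{bottom} layer: setting $n' = \lfloor n/l\rfloor$, it lets $D = \langle \sigma_1^1,\dots,\sigma_{n'}^1\rangle \cong (\Z/l\Z)^{n'}$, observes that $S_{n'}$ acts on $D$ by permuting these generators, computes $\nu_l(n!) = n' + \nu_l(n'!)$ to see that $D \rtimes S_{n'}$ has index prime to $l$ in $S_n$, and concludes $P_l(S_n) \cong D \rtimes P_l(S_{n'})$. You instead first reduce to the case $n = l^m$ via the base-$l$ expansion and then induct on $m$ by peeling off the \emph{top} layer $\langle\sigma_1^m\rangle$, exhibiting $P_l(S_{l^m}) \cong P_l(S_{l^{m-1}}) \Wr \Z/l$. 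The paper's single induction on $n$ is a bit more economical since it avoids the preliminary reduction to prime powers and treats all $n$ uniformly; your approach has the advantage of making the iterated wreath product structure fully explicit and connecting to the tree-automorphism picture. Either way the bookkeeping is routine, and your sketch would serve as a perfectly acceptable proof.
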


\begin{proof} 
For the proof, see the Appendix (\ref{App2}). \qedhere
\end{proof}

\begin{definition} Write $n$ in base $l$ as $n = \sum_{i=0}^{\mu_l(n)} a_i l^i$, and let $\xi_l(n)$ denote the sum of the nonzero digits of $n$ when written in base $l$, that is $\xi_l(n) = \sum_{i=0}^{\mu_l(n)} a_i$.
\end{definition}

\begin{definition} Let $I_j$ be the orbits of $\{1, \dots, n\}$ under the action of $P_l(S_n)$.  There are $\xi_l(n)$ such orbits. Let $i_j$ denote the smallest index in $I_j$. For each $j$, note that $|I_j| = l^{k}$ for some $k$. Let $k_j$ be such that $|I_j| = l^{k_j}$. 
\end{definition}

\begin{lemma}\label{edwreath} Let $H$ be a finite group. For any prime $l$, let $P = H^N \rtimes P_l(S_N)$. Then
$$\ed_k(P,l) = N\ed_k(H,l).$$
\end{lemma}

\begin{proof}

Since $H^N \subseteq P$, we immediately see that 
$$N\ed_k(H,l) = \ed_k(H^N,l) \leq \ed_k(P,l).$$

On the other hand, let $\rho': H \to V$ be a faithful representation of $H$ of minimal dimension such that $\dim(V) = ed_k(H, l)$. We can construct a faithful representation of $G$ as follows:  Let $\rho:G \to V^N$ be defined by $\otimes_i \rho$ on $H^N$ and the permutations in $S_N$ get sent to the corresponding permutation of the copies of $V^N$.  This a faithful representation of $G$ of the desired dimension. Thus 
\begin{align*} \ed_k(G,l) &\leq \dim(V^N) = N\dim(V) = N\ed_k(H,l).
\qedhere
\end{align*}
\end{proof}

\begin{lemma}\label{restrZ}  A representation of a finite $p$-group, $H$, is faithful if and only if its restriction to $Z(H)$ is faithful and if and
only if its restriction to $Z(H)[p]$ is faithful.
\end{lemma}
\begin{proof}
Let $H$ be a finite $p$-group.

\begin{claim*}\label{NcapZ} For $N$ a nontrivial normal subgroup of $H$, $N \cap Z(H)$ and $N \cap Z(H)[p]$ are not trivial.\end{claim*}
Granting this claim, we can complete the proof.\\
$\Rightarrow$: The forward direction is obvious. \\
$\Leftarrow$: Let $\rho$ be a representation of $H$ whose restriction to $Z(H)[p]$ is faithful.  Let $K = \text{ker} \rho$. Then $K$ is normal in $H$ and since the restriction of $\rho$ to $Z(H)[p]$ is faithful, we have $K \cap Z(H)[p] = \{1\}$.  Then by the Claim, $K$ must be trivial.  Thus $K = \{1\}$. An identical proof works for if $\rho$ restricted to $Z(H)$ is faithful.
\qedhere
\end{proof}

\begin{proof}[Proof of the Claim]
We will first show that $N \cap Z(H)$ is not trivial. Applying the class equation to $H \curvearrowright N$ by conjugation, we have 
$$|N| = |N^H| + \sum_{n_i} [H: \text{Stab}_{n_i}],$$
where the $n_i$ are chosen one from each conjugacy class, $N^H = \{n \in N : hnh^{-1} = n, \forall h \in H\} \subseteq Z(H)$ are the fixed points, and $\text{Stab}_{n_i} = \{h \in h : hn_ih^{-1} = n_i\}$ is the stabilizer of $n_i \in N$.  Since the stabilizer is a subgroup of $H$, a $p$-group, we have $$p \divides [H : \text{Stab}_{n_i}], \text{ for all } i.$$ Thus we must have $p \divides |N^H|$.  And since $1 \in N^H$, $N^H$ is not empty. Therefore, $|N^H| \geq p$. And $N^H \subseteq Z(H)$. Therefore, $N \cap Z(H)$ is not trivial.

Since $N \cap Z(H)$ is not trivial and is contained in a $p$-group, we must have an element of order $p$ by Cauchy's Lemma. Thus $N \cap Z(H)[p]$ is not trivial. 
\qedhere
\end{proof}

\begin{definition} Let $|G|_l = l^{v_l(|G|)}$; i.e. $|G|_l$ is the order of a Sylow $l$-subgroup of $G$. \end{definition}

\begin{lemma}\label{claim1} For an invertible matrix $A$, there is a rearrangement of the columns such that $a_{i,i} \neq 0$ for all $i.$. 
\end{lemma}

\begin{proof} An invertible matrix has non-zero determinant. The determinant of an $n \times n$ matrix is given by 
$$\sum_{\sigma \in S_n} (\prod_{i=1}^n a_{i,\sigma(i)} (-1)^{\text{sign}(\sigma)}.$$
Thus there exists some $\sigma \in S_n$, such that $\prod_{i=1}^n a_{i,\sigma(i)} (-1)^{\text{sign}(\sigma)} \neq 0$, and so each $a_{i,\sigma(i)} \neq 0$.  Then by applying $\sigma$ to the columns, we will get that $a_{i,i} \neq 0$ for all $i$. \qedhere

\end{proof}

\begin{definition} Let $\mu_{l^s}$ denote the group of $l^s$-th roots of unity. Note that $\mu_{l^s} =  \langle \zeta_{l^s} \rangle$.
\end{definition}

\begin{definition}\label{psidef} For $\mbf{a} \in (\Z/2^s\Z)^n$,  define $\psi_{\mbf{a}} \in \widehat{(\mu_{2^s})^n}$ to be $\psi_{\mbf{a}}: (\mu_{2^s})^n \to k(\zeta_{2^s})^\times$ given by 
 $$\psi_{\mbf{a}}(\mbf{x}) = \prod_{i=1}^n (x_i)^{a_i}.$$
Let $f = \frac{2^s}{\text{gcd}(a_i)}$. View $k(\zeta_f)$ as a vector space over $k$. Let $d = [k(\zeta_f):k]$. Then multiplication in $k(\zeta_f)$ corresponds to an element of $GL_d(k)$. Let the representation $\Psi_{\mbf{a}}: (\mu_{2^s})^n \to GL_d(k)$ be defined by 
$$\Psi_{\mbf{a}}(\mbf{x}) = \text{ multiplication by } \prod_{i=1}^n (x_i)^{a_i}.$$
\end{definition}

\begin{remark} Note that the map given by $\mbf{a} \mapsto \psi_{\mbf{a}}$ is an isomorphism between $(\Z/l^s\Z)^n$ and $\widehat{\mu_{l^s}}$. 
\end{remark}

\begin{definition}
Let $\Gamma = \text{Gal}(k(\zeta_{l^s})/k)$. For $\phi \in \Gamma$, note that $\phi(\zeta_{l^s}) = (\zeta_{l^s})^{\gamma_\phi}$ for a unique $\gamma_\phi \in (\Z/l^s\Z)^\times$. Define $\gamma_\phi$ to be the element of $(\Z/l^s\Z)^\times$ such that $\phi(\zeta_{l^s}) = (\zeta_{l^s})^{\gamma_\phi}$.
\end{definition}

\begin{remark} Note that the map $\phi \mapsto \gamma_\phi$ gives an injection $\Gamma \hookrightarrow (\Z/l^s\Z)^\times$. \end{remark}

\begin{lemma}\label{auxlemma} For any prime $l$, let $\Gamma = \text{Gal}(k(\zeta_{l^s})/k)$. Consider the action of $\Gamma$ on $\widehat{(\mu_{l^s})^n}$ given by $\phi(\psi_{\mbf{a}}) = \phi \circ \psi_{\mbf{a}}$. Then the corresponding action of $\gamma_\phi \in (\Z/l^s\Z)^\times$ on $(\Z/l^s\Z)^n \cong \widehat{(\mu_{l^s})^n}$ is given by scalar multiplication by $\gamma_\phi$. \end{lemma}

\begin{proof}

Let $\phi \in \Gamma,$  $\gamma = \gamma_\phi$, and $\psi_{\mbf{a}} \in \widehat{(\mu_{l^s})^n}$. Then
$$(\phi \circ \psi_{\mbf{a}})(e_i) = \phi((\zeta_{l^s})^{a_i}) = \zeta_{l^s}^{\gamma a_i} = \psi_{\gamma \mbf{a}}(e_i).$$  Thus the action of $\gamma$ in $(\Z/l^s\Z)^\times$ on $(\Z/l^s\Z)^n$ is given by scalar multiplication.
\qedhere
\end{proof}

\begin{lemma}\label{corrlemma} For any prime $l$, let $\Gamma = \text{Gal}(k(\zeta_{l^s})/k) \hookrightarrow (\Z/l^s\Z)^\times$. Then the irreducible representations of $(\mu_{l^s})^n$ over $k$ are in bijection with $\mbf{a} \in (\Z/l^s\Z)^n/\Gamma$, where the action of $\phi \in \Gamma$ is given by scalar multiplication by $\gamma_\phi$. The bijection is given by $\mbf{a} \in (\Z/l^s\Z)^n/\Gamma \mapsto \Psi_{\mbf{a}}: (\mu_{l^s})^n \to GL_d(k)$, where $d = [k(\zeta_f):k]$ for $f = \frac{l^s}{\text{gcd}(a_i)}$. Furthermore, if $\Psi_\mbf{a}$ is non-trivial on $(\mu_{l^s})^n[l]$, then $l \nmid a_i$ for some $i$ and $\Psi_{\mbf{a}}$ has dimension $[k(\zeta_{l^s}):k]$.
\end{lemma}

\begin{proof}
Let $S = (\mu_{l^s})^n$. Note that 
$$k[S] = M_{n_1}(D_1) \times \dots \times M_{n_t}(D_t),$$
for $D_i$ a division algebra over $k$. So $k[S]$ is an \'etale $k$-algebra.  Let $k_\text{sep}$ denote a separable closure of $k$ in $\overline{k}$. Let $\Gamma' = \text{Gal}(k_\text{sep}/k)$. For $A$ a finite \'etale $k$-algebra, consider the action of $\Gamma'$ on $\text{Hom}_k(A,k_\text{sep})$ given by $\phi(\lambda) = \phi \circ \lambda$ for $\phi \in \Gamma'$, $\lambda \in \text{Hom}_k(A, k_\text{sep})$. By \cite{Sz} (Theorem 1.5.4), the functor mapping a finite \'etale $k$-algebra $A$ to the finite set $\text{Hom}_k(A, k_\text{sep})$ gives an anti-equivalence between the category of finite \'etale $k$-algebras and the category of finite sets equipped with a continuous left $\Gamma'$-action; separable field extensions give rise to sets with transitive $\Gamma'$-action and Galois extensions give rise to $\Gamma'$-sets isomorphic to finite quotients of $\Gamma'$.  So if we write an \'etale $k$-algebra as a finite direct product of separable extensions of $k$, $A = L_1 \times L_2 \times \dots \times L_t$, then we can write $\text{Hom}_k(A, k_\text{sep})) = X_1 \coprod X_2 \coprod \dots \coprod X_t$, where $X_1, \dots, X_t$ are the orbits of $\Gamma'$ in $\text{Hom}_k(A, k_\text{sep})$. 

For $A = k[S] = \prod_i k_i$, then we have $\text{Hom}_{k}(k[S], k_\text{sep}) = \widehat{S}$. The irreducible representations of $S$ correspond to the factors $L_1, \dots, L_t$ in $k[S] = L_1 \times \dots \times L_t$ and these in turn correspond to $\widehat{S}/\Gamma'$, the orbits of $\Gamma'$ in $\widehat{S}$.  The correspondence between $\widehat{S}$ and irreducible representation of $S$ is given by $\psi_{\mbf{a}} \in \widehat{S}/\Gamma'$ corresponding to $\Psi_{\mbf{a}} \in \text{Irr}(S)$, where $\psi_{\mbf{a}}(e_i) = (\zeta_{l^s})^{a_i}$ and  $\Psi_{\mbf{a}}(e_i) = \text{ multiplication by } (\zeta_{l^s})^{a_i}$.

Note that for any $\lambda \in \widehat{S}$, $\lambda$ is a map on $k(\zeta_{l^s})$. Also, note that a homomorphism $k_\text{sep} \to k_\text{sep}$ must send $\zeta_{l^s}$ to a $l^s$-th root of unity; so it will map $k(\zeta_{l^s})$ to itself.  Thus any homomorphism in $\Gamma'$, when restricted to $k(\zeta_{l^s})$ will be an element of $\text{Gal}(k(\zeta_{l^s})/k)$. So in our case, we may replace $\Gamma'$ by $\Gamma = \text{Gal}(k(\zeta_{l^s})/k).$ By Lemma \ref{auxlemma}, the corresponding action of $\gamma_\phi \in (\Z/l^s\Z)^\times$ on $(\Z/l^s\Z)^n \cong \widehat{S}$ is given by scalar multiplication by $\gamma_\phi$.

Note that $S[l]$ is given by $\mbf{b} \in (\mu_{l^s})^n$ with $\mbf{b}^l = (1,\dots,1)$.  So $\mbf{b} = (\zeta_l^{x_1}, \dots \zeta_l^{x_n})$ for some $x_i \in \Z/l\Z$. Hence 
\begin{align*}
\psi_{\mbf{a}}(\mbf{b}) = (\zeta_l)^{\sum_{i=1}^n a_i x_i}
\end{align*}
Thus if $l \divides a_i$ for all $i$, then $\psi_{\mbf{a}}$ will be trivial on $S[l]$, and hence $\Psi_{\mbf{a}}$ will be trivial on $S[l]$ as well. So if $\Psi_{\mbf{a}}$ is non-trivial on $S[l]$, we must have $l \nmid a_i$ for some $i$, and so $\text{gcd}(a_i) = 1$. Thus the $\Psi_{\mbf{a}}$ that are non-trivial on $S[l]$ have dimension $[k(\zeta_{l^s}):k]$.
\qedhere
\end{proof}

\begin{lemma}\label{changepersp} For any prime $l$, let $\Gamma = \text{Gal}(k(\zeta_{l^s})/k) \hookrightarrow (\Z/l^s\Z)^\times$ and the action of $\phi \in \Gamma$ be given by scalar multiplication by $\gamma_\phi$. Then the orbit of $\Psi_{\mbf{a}}$ under the action of $P_l(S_n)$ on $\text{Irr}((\mu_{l^s})^n)$ will have the same size as the orbit of $\mbf{a}$ under the action of $P_l(S_n)$ on $(\Z/l^s\Z)^n/\Gamma$.  
\end{lemma}

\begin{proof}
Let $S = (\mu_{l^s})^n$. By Lemma \ref{corrlemma}, the irreducible representations of $S$ are in bijection with $\mbf{a} \in (\Z/l^s\Z)^n/\Gamma$, where the action of $\phi \in \Gamma$ is given by scalar multiplication by $\gamma_\phi$. The bijection is given by $\mbf{a} \in (\Z/l^s\Z)^n/\Gamma \mapsto \Psi_{\mbf{a}}.$ 

The action of $P_l(S_n)$ on $\text{Irr}(S)$ is given by 
$$\sigma(\lambda)(x) = \lambda(\sigma(x)),$$ 
for $\sigma \in P_l(S_n)$, $\lambda \in \text{Irr}(S)$, $x \in S$. And for $\Psi_{\mbf{a}} \in \text{Irr}(S),$ the orbit of $\Psi_{\mbf{a}}$ in $\text{Irr}(S)$ under the action of $P_l(S_n)$ corresponds to the orbit of $\psi_{\mbf{a}}$ in $\widehat{S}$ under the action of $P_l(S_n)$.

Under the isomorphism $\widehat{S} \cong (\Z/l^s\Z)^n$, we have that the action of $P_l(S_n)$ on $\widehat{S}$, which is given by 
$$\sigma_{\psi_\mbf{a}}(x) = \psi_{\mbf{a}}(\sigma(x)) = \psi_{\sigma^{-1}(\mbf{a})}(x),$$
corresponds to the action of $P_l(S_n)$ on $(\Z/l^s\Z)^n$ given by $\mbf{a} \mapsto \sigma^{-1}(\mbf{a}).$ 

Note that the action of $P_l(S_n)$ commutes with the action of $\Gamma$, so we get a corresponding action of $P_l(S_n)$ on $(\Z/l^s\Z)^n/\Gamma$ under the bijection $\text{Irr}(S) \leftrightarrow (\Z/l^s\Z)^n/\Gamma$, which is also given by $\mbf{a} \mapsto \sigma^{-1}(\mbf{a})$. The orbit of $\mbf{a}$ under this action will have the same size as the orbit of $\mbf{a}$ under the action $\mbf{a} \mapsto \sigma(\mbf{a})$. 

Therefore, the orbit of $\Psi_{\mbf{a}}$ under the action of $P_l(S_n)$ on $\text{Irr}(S)$ has the same size as the orbit of $\mbf{a}$ in $(\Z/l^s\Z)^n/\Gamma$ under the action of $P_l(S_n)$ given by $\mbf{a} \mapsto \sigma(\mbf{a})$.
\end{proof}

\subsection{Clifford's Theorem}

Let $N \triangleleft G$, $L = G/N$. 

\begin{definition} For a representation $\rho: G \to GL(V)$, $f:G' \to G$, define $f^*(\rho): G' \to GL(V)$ by $f^*(\rho) = \rho \circ f$. \end{definition}

Then note that for $f_1:G'' \to G'$, $f_2: G' \to G$, we have that 
$$f_1^*(f_2^*(\rho)) = (\rho \circ f_2) \circ f_1 = \rho \circ (f_2 \circ f_1) = (f_2 \circ f_1)^*(\rho).$$

Let $\text{Rep}(G)$ denote the set of isomorphism classes of representations of $G$.  Then $\text{Aut}(G)$ acts on $\text{Rep}(G)$ by $f_\rho = (f^{-1})^*(\rho)$ for $f \in \text{Aut}(G), \rho \in \text{Rep}(G)$. This is an action since 
$$(f \circ g)_\rho = ((f \circ g)^{-1})^*(\rho) = \rho \circ (g^{-1} \circ f^{-1}) = (\rho \circ g^{-1}) \circ f^{-1} = (f^{-1})^*((g^{-1})^*(\rho)).$$

Let $\text{Irr}(G) \subset \text{Rep}(G)$ denote the set of isomorphism classes of irreducible representations of $G$. Then $\text{Irr}(G)$ is invariant under the action of $\text{Aut}(G)$ since if $\rho$ is irreducible, then $f_\rho = (f^{-1})^*(\rho) = \rho \circ f^{-1}$ is also irreducible.

Let $\text{Inn}(G)$ denote the set of inner automorphisms of $G$.  for $g \in G$, let $\varphi_g$ denote the inner automorphism $\varphi_g: G \to G$ defined by $\varphi_g(h) = g^{-1}hg$.  

Note that since $N \triangleleft G$, we can restrict $\varphi_g$ to $N$, and so we have $G \to \text{Aut}(N)$ given by $g \mapsto (\varphi_g|_N \in \text{Aut(N)})$. Then $G$ acts on $\text{Rep}(N)$ by 
$$g(\lambda) := (\varphi_g|_N)(\lambda) = ((\varphi_g|_N)^{-1})^*(\lambda) = \lambda \circ (\varphi_g|_N)^{-1}$$
for $g \in G, \lambda \in \text{Rep}(N)$.

Note that if $g \in N$, we can define $\alpha:V \to V$ by $\varphi(v) = \lambda(g)v$, and then for $h \in G$, $v \in V$

$$(\lambda(h) \circ \alpha)(v) = \lambda(h)(\lambda(g)v) = \lambda(hg)(v)$$
and 
$$(\alpha \circ g(\lambda)(h))(v) = (\alpha \circ \lambda \circ (\varphi_g|_N)^{-1}(h))(v) = g\lambda(g^{-1}hg)(v) = \lambda(hg)(v).$$

Thus $g(\lambda) \cong \lambda$ for $g \in N$. Thus $N$ acts trivially on $\text{Irr}(N) \subset \text{Rep}(N)$. Hence $L = G/N$ acts on $\text{Irr}(N) \subset \text{Rep}(N)$.

\begin{theorem}[Clifford's Theorem]\label{cliff}  Let $N \triangleleft G$, $L = G/N$, $\rho \in \text{Irr}(G)$. Then there exist pairwise non-isomorphic $\lambda_1, \dots, \lambda_c \in \text{Irr}(N)$ such that 
$$\rho|_{N} \cong  \left( \oplus_{i=1}^c  \lambda_i \right)^{\oplus d}, \text{ for some } c, d,$$ 
Let $S \subset \text{Irr}(N)$ be the set $\{ \overline{\lambda_1}, \dots, \overline{\lambda_c}\}$.  Then 
\begin{enumerate}
\item $S$ is an $L$-invariant subset of $\text{Irr}(N)$.
\item $L$ acts on $S$ transitively.
\end{enumerate}
\end{theorem}

\begin{proof}

Write $\rho|_N$ as a direct sum of irreducible representations:
\begin{align*}
&\rho|_N \cong \mu_1 \oplus \mu_2 \oplus \dots \oplus \mu_t\\
&V = W_1 \oplus W_2 \oplus \dots \oplus W_t,
\end{align*}
where $W_i$ is an $N$-subspace and $\mu_i: N \to GL(W_i)$ is an irreducible representation. Then for $g \in G$,
$$\rho|_N \cong (g_\rho)|_N \cong g_{\mu_1} \oplus g_{\mu_2} \oplus \dots \oplus g_{\mu_t}.$$
So for all $g_{\mu_j}$ there exists $\mu_i$ such that $g_{\mu_j} \cong \mu_i$. And similarly, for all $g_i$ there exist $g_{\mu_j}$ such that $g_i \cong g_{\mu_j}$. Hence $G$ acts on $\{\overline{\mu_1}, \overline{\mu_2}, \dots, \overline{\mu_s}\}$. And since $N$ acts trivially, this means $L = G/N$ acts on $S = \{\overline{\mu_1}, \overline{\mu_2}, \dots, \overline{\mu_s}\}$. Gather together the isomorphic $\mu_i$ to write
$$\rho|_N \cong \lambda_1^{\oplus d_1} \oplus \lambda_2^{\oplus d_2} \oplus \dots \oplus \lambda_c^{\oplus d_c},$$
where the $\lambda_i$ are non-isomorphic. Then $L$ acts on $\{\overline{\lambda_1}, \dots, \overline{\lambda_c}\} = \{\overline{\mu_1}, \overline{\mu_2}, \dots, \overline{\mu_s}\}$. This proves the first statement in the theorem.

Let $W$ be an irreducible $N$-subspace of $V$ and $\lambda: N \to GL(W)$ the corresponding sub-representation of $\rho$. For $g \in G$,
$$ngW = gg^{-1}ngW = gW.$$
So $gW$ is also an $N$-subspace. and $\lambda': N \to GL(gW)$ is isomorphic to $g(\lambda)$, so $gW$ is also irreducible.

Note that $gW$ is also irreducible since if $gW$ were reducible, say $gW = W_1 \oplus W_2$, then we could write $W = g^{-1}(gW) = g^{-1}W_1 \oplus g^{-1}W_2$ and so $W$ would not be irreducible. So $gW$ is an irreducible subspace of $V_N$.

Then consider $\sum_{g \in G} gW$. This is a $G$-invariant subspace of $V$, thus since $V$ is irreducible we must have $V = \sum_{g \in G} gW$. This can be refined to be a direct sum. Then we have $V$ written as a direct sum in two different ways: $V = \oplus_{j=1}^{t'} g_jW$ and $V = \oplus_{i=1}^t W_i$. So we must have $t'  = t$ and $W_i \cong g_iW$ for some $g_i \in G$. For $\overline{\lambda_i}, \overline{\lambda_j}$, let $W_i = g_iW, W_j = g_jW$ be the corresponding $N$-subspaces. Then 
$$W_j = g_jW = (g_jg_i^{-1})g_iW = (g_jg_i^{-1})W_i.$$
Thus for $g = g_jg_i^{-1}$, we have $\overline{\lambda_j} = g_{\overline{\lambda_i}}$. Hence $G$ acts transitively on $S$ and so since $N$ acts trivially, we can conclude that $L = G/N$ acts transitively on $S$.
\qedhere
\end{proof}

\section{The General Linear Groups}

In this section, we will prove the following theorem:

\begin{theorem}\label{GLn} Let $p$ be a prime, $q = p^r$, and $l$ a prime with $l \neq 2,p$.  Let $d$ be the smallest positive integer such that $l \divides q^d - 1$, $s = v_l(q^d-1)$, and $n_0 = \lfloor \frac{n}{d} \rfloor$.  Then
$$\ed_k(GL_n(\F_q),l) = n_0[k(\zeta_{l^s}):k(\zeta_l)].$$
\end{theorem}

\noindent By (\cite{Sta}, Lemma 3.1), for $l \neq 2$,
$$|GL_n(\F_q)|_l = l^{sn_0 + \lfloor \frac{n_0}{l} \rfloor + \lfloor \frac{n_0}{l^2} \rfloor + \ldots } =  l^{sn_0} \cdot |S_{n_0}|_l.$$

\begin{proposition}\label{GLsyl} For $P \in \syl_l(GL_n(\F_q))$, 
$$P \cong (\mu_{l^s})^{n_0} \rtimes P_l(S_{n_0}).$$
\end{proposition}

Granting this proposition, we can prove Theorem \ref{GLn}:
\begin{proof}[Proof of Theorem \ref{GLn}]

By Lemma \ref{edwreath} and Proposition \ref{GLsyl}, 
$$\ed_k(GL_n(\F_q),l) = n_0\ed_k(\mu_{l^s}, l).$$
By Corollary \ref{rootofunity}, we may assume that $k$ contains a primitive $l$-th root of unity, that is, we may assume that $k = k(\zeta_l)$. Then by Theorem \ref{KM4.1}, $\ed_{k}(\mu_l^s,l) = \ed_{k}(\mu_{l^s})$. And by \cite{KM} (Corollary 5.2), $\ed_{k}(\mu_{l^s}) = [k(\zeta_{l^s}):k]$, where $\zeta_{l^s}$ denotes a primitive $l^s$-th root of unity. Thus 
\begin{align*}
\ed_k(GL_n(\F_q),l) &= n_0[k(\zeta_{l^s}):k(\zeta_l)].
\qedhere
\end{align*}
 \end{proof}
\begin{proof}[Proof of Proposition \ref{GLsyl}]
\noindent \footnote{This construction follows \cite{Sta}.}Let $\zeta_{l^s}$ be a primitive $l^s$-th root of unity in $\F_{q^d}$, and let $E$ be the image of $\zeta_{l^s}$ in $GL_d(\F_q)$. There are $n_0$ copies of $\langle E \rangle$ in $GL_n(\F_q)$, given by $\langle E_1 \rangle, \ldots, \langle E_{n_0} \rangle$ where
$$E_1 = \begin{pmatrix}
E &   &        & \\
  & 1 &        & \\
  &   & \ddots & \\
  &   &        & 1
\end{pmatrix}, \ldots, E_{n_0} = \begin{pmatrix}
1 &        &   &   &\\
  & \ddots &   &   &\\
  &        & 1 &   &\\
  &        &   & E &\\
  &        &   &   & \text{Id}_{n-n_0d}
\end{pmatrix}$$
The symmetric group on $n_0$ letters acts on $\langle E_1, \ldots, E_{n_0} \rangle$ by permuting the $E_i$, and it can be embedded into $GL_n(\F_q)$. Let
\begin{align*} 
P &= \langle E_1, \ldots, E_{n_0} \rangle \rtimes P_l(S_{n_0})\\
&\cong (\mu_{l^s})^{n_0} \rtimes P_l(S_{n_0})
\end{align*}
Then

\begin{align*}
|P|& = |(\mu_{l^s})^{n_0}| \cdot |P_l(S_{n_0})|\\
&= |GL_n(\F_q)|_l
\end{align*}
Therefore, $P \in \syl_l(GL_n(\F_q))$.
\qedhere
\end{proof}

\section{The Special Linear Groups}

\begin{theorem}\label{SLn} Let $p$ be a prime, $q = p^r$, and $l$ a prime with $l \neq 2,p$.  Let $k$ be a field with $\text{char } k \neq l$.  Then 
$$\ed_k(SL_n(\F_q),l) = \begin{cases}
 \ed_k(GL_{n-1}(\F_q),l), &l \divides q - 1, \text{ } l \nmid n \\
 \ed_k(GL_n(\F_q),l), &l \nmid q - 1 \text{ or } l \divides n
 \end{cases}$$
\end{theorem}

By (\cite{Gr}, Proposition 1.1), 
$$|SL_n(\F_q)| = \frac{|GL_n(\F_q)|}{q-1}.$$
So
\begin{align*}
|SL_n(\F_q)|_l &= \frac{|GL_n(\F_q)|_l}{l^{v_l(q-1)}} = l^{s(n-1)} \cdot |S_{n}|_l\\
\end{align*}

If $l \nmid q - 1$, then the Sylow $l$-subgroups of $SL_n(\F_q)$ are isomorphic to the Sylow $l$-subgroups of $\GL_n(\F_q)$. So we need only prove the case when $l \divides q - 1$. Thus in this section, we will assume $l \divides q - 1$, and so we have $d = 1$, $s = v_l(q-1)$, and $n_0 = n$ in the notation used in the section on $GL_n(\F_q)$.

The proof when $l \divides q - 1, l\nmid n$ is simple:
\begin{proof}[Proof of Theorem \ref{SLn} for the case $l \divides q-1$, $l \nmid n$]
Note that we can embed $GL_{n-1}(\F_q)$ in $SL_n(\F_q)$ by sending the matrix $A \in GL_{n-1}(\F_q)$ to 
$$\begin{pmatrix} A & 0\\
0 & \text{det}(A^{-1})\end{pmatrix}.$$
If $l \nmid n$, then $|S_n|_l = |S_{n-1}|_l$, thus 
$$|SL_n(\F_q)|_l = l^{s(n-1)} \cdot |S_n|_l = l^{s(n-1)} \cdot |S_{n-1}|_l = |GL_{n-1}(\F_q)|_l.$$
Therefore, the Sylow $l$-subgroups of $SL_n(\F_q)$ are isomorphic to Sylow $l$-subgroups of $GL_{n-1}(\F_q)$. Thus 
\begin{align*} \ed_k(SL_n(\F_q),l) &= \ed_k(GL_{n-1}(\F_q),l) = (n-1)[k(\zeta_{l^s}):k].
\qedhere
\end{align*}
\end{proof}

For the remainder of this section, we will assume that $l \divides n$ (in addition to $l \divides q-1$). Also, by Corollary \ref{rootofunity}, we may assume that $k$ contains a primitive $l$-th root of unity, that is, we may assume that $k = k(\zeta_l)$.

\subsection{\texorpdfstring{A Sylow $l$-subgroup and its center}{A Sylow l-subgroup and its center}}

%
%
%

\begin{lemma} For $P \in \syl_l(SL_n(\F_q))$, 
$$P \cong \{(\mbf{b}, \tau) \in (\mu_{l^s})^n \rtimes P_l(S_n) : \prod_{i=1}^n b_i = \text{sgn}(\tau)\},$$
where the action of $P_l(S_n)$ on $\mbf{b} \in (\mu_{l^s})^n$ is given by permuting the $b_i$.
\end{lemma}

\begin{proof}
By Proposition \ref{GLsyl}, the Sylow $l$-subgroups of $GL_n(\F_q)$ are isomorphic to $(\mu_{l^s})^n \rtimes P_l(S_n)$. Let 
$$P = \{(\mbf{b}, \tau) \in (\mu_{l^s})^n \rtimes P_l(S_n) : \prod_{i=1}^n b_i = \text{sgn}(\tau)\}.$$
Then $P \subset SL_n(\F_q)$ and 
$$|P| = \frac{l^{sn} \cdot |P_l(S_n)|_l}{l^s} = l^{s(n-1)} \cdot |S_n|_l = |SL_n(\F_q)|_l.$$
Thus $P$ is isomorphic to a Sylow $l$-subgroup of $SL_n(\F_l)$.
\end{proof}

\begin{remark}
Note that $P_l(S_n)$ is the direct product $\times_j P_l(S_{l^{k_j}})$ acting on $\{1,\dots,n\} = \cup_{j=1}^{\xi_l(n)} \{i_j, \dots, i_j + l^{k_j}-1\}$. So there  are $\xi_l(n)$ orbits of $\{1, \dots, n\}$ under the action of $P_l(S_n)$.  
\end{remark}

 \begin{lemma}\label{SLZ}  If $l  \divides n$, then for $P \in \syl_l(SL_n(\F_q))$,
 $$Z(P)[l] \cong (\mu_l)^{\xi_l(n)}.$$
\end{lemma}
\begin{proof} Fix $(\mbf{b},\tau) \in P = \{(\mbf{b}, \tau) \in (\mu_{l^s})^n \rtimes P_l(S_n) : \prod_{i=1}^n b_i = \text{sgn}(\tau)\}$. Then for $(\mbf{b}', \tau') \in P$, 
$$(\mbf{b},\tau)(\mbf{b}',\tau') = (\mbf{b} \tau(\mbf{b}'), \tau\tau') \text{ and } (\mbf{b}', \tau')(\mbf{b},\tau) = (\mbf{b}' \tau'(\mbf{b}), \tau'\tau).$$
Thus $(\mbf{b},\tau)$ is in the center if and only if $\tau \in Z(P_l(S_{n}))$ and 
$$\mbf{b}\tau(\mbf{b}') = \mbf{b}'\tau'(\mbf{b})$$ for all $\mbf{b}',\tau'$. Choosing $\tau' = \Id$, we see we must have $\mbf{b}\tau(\mbf{b}') = \mbf{b}'\mbf{b}$ for all $(\mbf{b'},\text{Id}) \in P$.  Thus we must have $\tau(\mbf{b}') = \mbf{b}'$ for all $(\mbf{b}', \Id)$ in $P$. For any $\tau \neq \Id$, we can choose $\mbf{b'}$ with $\prod_{i=1}^n b_i = 1$ (and hence $(\mbf{b}',\Id) \in P$) for which $\tau(\mbf{b}') \neq \mbf{b'}$. Thus we can conclude that we must have $\tau = \text{Id}$.

We also need $\tau'(\mbf{b}) = \mbf{b}$ for all $(\mbf{b}',\tau')$ in $P$. For any $\tau' \in P_l(S_{n})$, we can choose $\mbf{b}'$ such that $(\mbf{b}', \tau') \in P$, so we need $\tau'(\mbf{b}) = \mbf{b}$ for all $\tau' \in P_l(S_n)$.   So $b_i = b_{i'}$ for $i,i'$ in the same $I_j$.  So 
$$Z(P) = \{\mbf{b} \in (\mu_{l^s})^n : \prod_{i=1}^n b_i = 1 \text{ and the } b_i \text{ are equal in the blocks } I_j\}.$$
Then 
$$Z(P)[l] = \{\mbf{b} \in (\mu_{l})^n : \prod_{i=1}^n b_i = 1 \text{ and the } b_i \text{ are equal in the blocks } I_j\}.$$
 If we have the $b_i$ equal in the same $I_j$, we have
$$\prod_{i=1}^n b_i = \prod_{j=1}^{\xi_l(n)}  x_j^{l^{k_j}},$$
where $l^{k_j} = |I_j|$ denotes the size of the $j$-th block of $b_i$ equal to each other and $x_j$ is the value of the $b_i$ in that block. Note that since $l \divides n$, we know that $l \divides |I_j|$ for all $j$; thus $l \divides l^{k_j}$ for all $j$ and hence $x_j{l^{k_j}} = 1$ (since $x_j \in \mu_l$). Thus $\prod_{i=1}^n b_i = 1$.  So we can remove the condition that $\prod_{i=1}^n b_i = 1$ for $Z(P)[l]$:
\begin{align*} Z(P)[l] &= \{\mbf{b} \in (\mu_l)^n : \text{ the } b_i \text{ are equal in the blocks } I_j\} \cong (\mu_l)^{\xi_l(n)}. \qedhere
\end{align*}
\end{proof}

\subsection{\texorpdfstring{Case 1:  $n = l^t, $}{Case 1: n=lt}}

\begin{proof}[Proof of Theorem \ref{SLn} for the case $l \divides q-1$, $l \divides n$, $n = l^t$]

Let $P = \{(\mbf{b}, \tau) \in (\mu_{l^s})^n \rtimes P_l(S_n) : \prod_{i=1}^n b_i = \text{sgn}(\tau)\}$. Note that since $(\mu_{l^s})^{n-1} \subset SL_n(\F_q) \subset GL_n(\F_q)$, we have
 $$(n-1)[k(\zeta_{l^s}):k] \leq \ed_k(SL_n(\F_q),l) \leq n[k(\zeta_{l^s}):k].$$

Let $\rho$ be a faithful representation of $P$ of minimum dimension (and so it is also irreducible since the center has rank $1$). Then $\dim(\rho) \geq (l^t-1)[k(\zeta_{l^s}):k]$. Let $T = \{\mbf{b} \in (\mu_{l^s})^n : \prod_{i=1}^n b_i = 1\} \subset P$.  Then $T \triangleleft P$ and so by Clifford's Theorem (Theorem \ref{cliff}), $\rho|_T$ decomposes into a direct sum of irreducibles in the following manner:
$$\rho|_{T} \cong  \left( \oplus_{i=1}^c  \lambda_i \right)^{\oplus d}, \text{ for some } c, d,$$ 
with the $\lambda_i$ non-isomorphic, and $P/T$ acts transitively on the isomorphism classes of the $\lambda_i$. So the $\lambda_i$ have the same dimension and the number of $\lambda_i$, $c$, divides $|P/T|$, which is a power of $l$. Since $\rho$ is faithful, one of the $\lambda_i$ must be non-trivial on $T[l]$.

By Lemma \ref{corrlemma}, the irreducible representations of $T \cong (\mu_{l^s})^{n-1}$ are given by $\Psi_\mbf{a}$ with $\mbf{a} \in (\Z/l^s\Z)^{n-1}/\Gamma$, where $\Gamma = \text{Gal}(k(\zeta_{l^s})/k)$, and if $\Psi_\mbf{a}$ is non-trivial on $T[l]$, then $\Psi_\mbf{a}$ has dimension $[k(\zeta_{l^s}):k]$. So we must have $\dim(\lambda_i) = [k(\zeta_{l^s}):k]$ for all $i$.

If $c = 1$, then since $\rho|_T = \oplus_{d \text{ times}} \lambda$ is faithful, we must have $\lambda$ is faithful. Recall that $\ed_k(T) = \ed_k((\mu_{l^s})^{n-1}) = (n-1)[k(\zeta_{l^s}):k]$. Since $n = l^t$ and $l \neq 2$, we must have $n > 2$ and so $n - 1 > 1$. Thus there are no $[k(\zeta_{l^s}):k]$-dimensional faithful representations of $T$. But $\dim(\lambda) = [k(\zeta_{l^s}):k]$, so we can conclude that $\lambda$ is not faithful. So we cannot have $c = 1$, and thus since $c$ is a power of $l$ we can conclude that $c$ is a multiple of $l$.   Thus $\dim(\rho) $ is a multiple of $l[k(\zeta_{l^s}):k]$.  So since we know that 
$$(l^t-1)[k(\zeta_{l^s}):k] \leq \dim(\rho) \leq l^t [k(\zeta_{l^s}):k],$$
we can conclude that $\dim(\rho) = l^t [k(\zeta_{l^s}):k]$. Thus 
\begin{align*} \ed_k(SL_{l^t}(\F_q),l)) &= l^t[k(\zeta_{l^s}):k] = \ed_k(GL_{l^t}(\F_q),l).
\qedhere
\end{align*}
\end{proof}

\subsection{\texorpdfstring{Case 2: $l \divides n$, $n \neq l^t$}{Case 2: l divides n, n neq lt}}

\begin{proof}[Proof of Theorem \ref{SLn} for the case $l \divides q-1$, $l \divides n$, $n \neq l^t$] \text{ }

Let $P = \{(\mbf{b}, \tau) \in (\mu_{l^s})^n \rtimes P_l(S_n) : \prod_{i=1}^n b_i = \text{sgn}(\tau)\}$. Let $\rho$ be a faithful representation of $P$ of minimum dimension.  Let $\rho = \oplus_{j=1}^{\xi_l(n)} \rho_j$ be the decomposition into irreducibles. Let $C = Z(P)$. By Lemma \ref{BMKS3.5}, if $\chi_j$ are the central characters of $\rho_j$, then $\{\chi_j|_{C[l]}\}$ form a basis for $\widehat{C[l]}$. Let $\mbf{b}^j$  be the dual basis for $C[l]$ so that $\rho_j(\mbf{b}^i)$ is trivial for $i \neq j$.  

For $j \leq \xi_l(n)$, let
$$P_j = \{(\mbf{b,\tau}) \in P: b_i = 1 \text{ for } i \notin I_j, \text{ } \tau \text{ acts trivially on } i \text{ for } i \notin I_j\}.$$
For $j \leq \xi_l(n)$, define $\mbf{e}^j$ by 
$$(\mbf{e}^j)_i = \begin{cases} \zeta_l, &i \in I_j\\
1, &i \notin I_j \end{cases}.$$ Then $\{\mbf{e}^j\}$ is a basis for $C[l]$.  Write $\mbf{b}^j = \oplus_{i} a_{i,j}\mbf{e}^i$. Then $\rho_j$ will be non-trivial on $P_j \cap C[l]$ if and only if $a_{j,j} \neq 0$. Note that $(a_{i,j})$ is the change of basis matrix from $\{\mbf{e}^j\}$ to $\{\mbf{b}^j\}$. Since it is a change of basis matrix, it must be invertible. By Lemma \ref{claim1}, we can rearrange the $\mbf{b}^j$ such that for all $i$ $a_{i,i} \neq 0$ in the change of basis matrix from $\{\mbf{e}^j\}$ to $\{\mbf{b}^j\}$. And so we can rearrange the $\rho_j$ such that $\chi_j|_{C[l]}$ is non-trivial on $P_j \cap C[l]$ and thus $\rho_j$ is non-trivial on $P_j \cap C[l]$.

Note that 
$P_j$ is isomorphic to a Sylow $l$-subgroup of $SL_{l^{k_j}}(\F_q)$.  And  $P_j \cap C[l]$ is precisely $Z(P_j)[l]$, which has rank $1$.  Thus, since $\rho_j$ is non-trivial on $P_j \cap C[l]$, we can conclude that $\rho_j|_{P_j}$ is a faithful representation of $P_j$.  And we know by the case $n = l^t$ that $\ed_k(SL_{l^{k_j}}(\F_q),l) = l^{k_j}[k(\zeta_{l^s}):k].$ So we can conclude that 
$$\dim(\rho_j) \geq l^{k_j}[k(\zeta_{l^s}):k].$$
Thus
\begin{align*}
\dim(\rho) &= \sum_{j=1}^{\xi_l(n)} \dim(\rho_j)\\
&\geq \sum_{j=1}^{\xi_l(n)} l^{k_j}[k(\zeta_{l^s}):k]\\
&= \left(\sum_{j=1}^{\xi_l(n)} l^{k_j}\right)[k(\zeta_{l^s}):k]\\
&= n[k(\zeta_{l^s}):k]\\
\end{align*}
So
$$\ed_k(SL_n(\F_q),l) \geq n[k(\zeta_{l^s}):k].$$
Thus, since we also have 
$$\ed_k(SL_{n}(\F_q),l) \leq \ed_k(GL_{n}(\F_q),l) = n[k(\zeta_{l^s}):k] $$
we can conclude that 
\begin{align*} \ed_k(SL_n(\F_q),l)) &= n[k(\zeta_{l^s}):k] = \ed_k(GL_n(\F_q),l).
\qedhere
\end{align*}
\end{proof}

\section{The Projective General Linear Groups}

\begin{theorem}\label{PGLn} 
Let $p$ be a prime, $q = p^r$, and $l$ a prime with $l \neq 2,p$.    Let $k$ be a field with $\text{char } k \neq l$. Let $d$ be the smallest positive integer such that $l \divides q^d - 1$, $s = v_l(q^d-1)$, and $n_0 = \lfloor \frac{n}{d} \rfloor$.  Then 
$$\ed_k(PGL_n(\F_q),l) = \begin{cases} \ed_k(GL_n(\F_q),l), &l \nmid q - 1\\
\ed_k(GL_{n-1}(\F_q),l), &l \divides q - 1 \text{ and } l \nmid n\\
l^{v_l(n)}(n-l^{v_l(n)})[k(\zeta_{l^s}):k(\zeta_l)], &l \divides q - 1 \text{ and } l \divides n \text{ and } n \neq l^t\\
l^{2t-1}[k(\zeta_{l^s}):k(\zeta_l)], &l \divides q - 1 \text{ and } n = l^t\\
 \end{cases}$$
\end{theorem}

By (\cite{Gr}, Proposition 1.1), 
$$|PGL_n(\F_q)| = \frac{|GL_n(\F_q)|}{q-1}.$$
So
\begin{align*}
|PGL_n(\F_q)|_l &= \frac{|GL_n(\F_q)|_l}{l^{v_l(q-1)}} = l^{s(n-1)} \cdot |S_{n}|_l\\
\end{align*}

If $l \nmid q - 1$, then the Sylow $l$-subgroups of $PGL_n(\F_q)$ are isomorphic to the Sylow $l$-subgroups of $\GL_n(\F_q)$. So we need only prove the case when $l \divides q - 1$. Thus in this section, we will assume $l \divides q - 1$, and so we have $d = 1$, $s = v_l(q-1)$, and $n_0 = n$ in the notation used in the section on $GL_n(\F_q)$.

\begin{lemma} For $P \in \syl_l(PGL_n(\F_q))$
$$P \cong (\mu_{l^s})^n/\{(x,x,\dots,x)\} \rtimes P_l(S_n).$$ 
where the action of $P_l(S_n)$ on $\mbf{a}$ is given by permuting the $a_i$.
\end{lemma}

\begin{proof}
$PGL_n(\F_q)$ is defined to be 
$$PGL_n(\F_q) = GL_n(\F_q)/Z(GL_n(\F_q)).$$
By Proposition \ref{GLsyl}, the Sylow $l$-subgroups of $GL_n(\F_q)$ are isomorphic to $(\mu_{l^s})^n \rtimes P_l(S_n)$. The center of $GL_n(\F_q)$ is given by 
$$Z(GL_n(\F_q)) = \{x\text{Id}_n : x \in \F_q, x \neq 0\}.$$ 
By looking at the Sylow $l$-subgroup calculated in the section on $GL_n(\F_q)$ and modding by $Z(GL_n(\F_q))$, we see that a Sylow $l$-subgroup of $PGL_n(\F_q)$ will be isomorphic to
$$P = (\mu_{l^s})^n/\{(x,x,\dots,x)\} \rtimes P_l(S_n).$$
\end{proof}

The proof when $l \divides q - 1, l\nmid n$ is simple:

\begin{proof}[Proof of Theorem \ref{PGLn} for the case $l \divides q-1$, $l \nmid n$]

Note that for $l \nmid n$, $P_l(S_n) = P_l(S_{n-1})$.  
Let $P' = (\mu_{l^s})^{n-1} \rtimes P_l(S_{n-1})$. We can construct an isomorphism from $P'$ to $P = (\mu_{l^s})^n/\{(x, \dots, x)\} \rtimes P_l(S_n)$ by sending $(b_1, \dots b_{n-1})$ to $(b_1, \dots, b_{n-1},1) \in P$.
Therefore, the Sylow $l$-subgroups of $PGL_n(\F_q)$ are isomorphic to Sylow $l$-subgroups of $GL_{n-1}(\F_q)$. Thus 
\begin{align*} \ed_k(PGL_n(\F_q),l) &= \ed_k(GL_{n-1}(\F_q),l). \qedhere
\end{align*}
\end{proof}

For the remainder of this section, we will assume that $l \divides n$ (in addition to $l \divides q-1$). By Corollary \ref{rootofunity}, we may assume that $k$ contains a primitive $l$-th root of unity, that is, we may assume that $k = k(\zeta_l)$.

\subsection{\texorpdfstring{Case 1: $l \divides q-1$, $l \divides n$, $n \neq l^t$}{Case 1: l divides q-1, l divides n, n neq lt}}

For the proof of Theorem \ref{PGLn} in the case $l \divides q - 1$, $l \divides n$, $n \neq l^t$, we will need the following lemmas..

\begin{lemma}\label{ZPGLn} For $P \in \syl_l(PGL_n(\F_q))$ in the case $l \divides q - 1$, $l \divides n$, $n \neq l^t$, let $\mbf{b}^j$ be given by 
$$(\mbf{b}^j)_i = \begin{cases} \zeta_l, &i \in I_j\\
1, &i \notin I_j \end{cases}.$$ 
Then 
$$Z(P)[l] = \langle \mbf{b}^j \rangle_{j=1}^{\xi_l(n)}/\{(x,\dots,x)\} \cong \langle \mbf{b}^j \rangle_{j=1}^{\xi_l(n)-1} \cong (\mu_{l})^{\xi_l(n)-1}.$$
\end{lemma}

\begin{proof}
Let $P = (\mu_{l^s})^n/\{(x,\dots,x)\} \rtimes P_l(S_n)$. Fix $(\mbf{b}, \tau) \in P$. Then for $(\mbf{b}', \tau') \in P$,
$$(\mbf{b},\tau)(\mbf{b}',\tau') = (\mbf{b} \tau(\mbf{b}'), \tau\tau') \text{ and } (\mbf{b}', \tau')(\mbf{b},\tau) = (\mbf{b}' \tau'(\mbf{b}), \tau'\tau).$$
Thus $(\mbf{b},\tau)$ is in the center if and only if $\tau \in Z(P_l(S_{n}))$ and 
$$\mbf{b}\tau(\mbf{b}') = \mbf{b}'\tau'(\mbf{b}) \mod \{(x,\dots,x)\}$$ for all $\mbf{b}',\tau'$. Choosing $\tau' = \Id$, we see we must have $\mbf{b}\tau(\mbf{b}') = \mbf{b}'\mbf{b} \mod \{(x,\dots,x)\}$.  Thus we must have $\tau(\mbf{b}') = \mbf{b}' \mod \{(x,\dots,x)\}$ for all $\mbf{b}'$ in $(\mu_{l^s})^n/\{(x,\dots,x)\}$. For any $\tau \neq \text{Id}$, we can choose a $\mbf{b}'$ for which this is not satisfied, so we can conclude that we must have $\tau = \Id$. We also need $\tau(\mbf{b}) = \mbf{b} \mod \{(x,\dots,x)\}$ for all $\tau \in P_l(S_n)$. 

Since $n \neq l^t$, for each $i, i'$ in the same $I_j$, there exists $\tau \in P_l(S_n)$ that sends $i$ to $i'$ and fixes some other index. Since there is an index that is fixed by $\tau$, in order for $\tau(\mbf{b})$ to equal $\mbf{b}\mbf{x}$ for $\mbf{x} = (x,\dots,x)$, we must have $x = 1$ and so $\tau(\mbf{b}) = \mbf{b}$. So $b_i = b_{i'}$ for $i,i'$ in the same $I_j$. Let $\mbf{b}^j$ be given by 
$$(\mbf{b}^j)_i = \begin{cases} \zeta_l, &i \in I_j\\
1, &i \notin I_j \end{cases}.$$ 
Then 
\begin{align*} Z(P)[l] = \langle \mbf{b}^j \rangle_{j=1}^{\xi_l(n)}/\{(x,\dots,x)\} \cong \langle \mbf{b}^j \rangle_{j=1}^{\xi_l(n)-1} &\cong (\mu_{l})^{\xi_l(n)-1}. \qedhere
\end{align*}
\end{proof}

\begin{lemma}\label{gammaaction} For $\Gamma = \text{Gal}(k(\zeta_{l^s})/k)$, $\mbf{a} \in (\Z/l^s\Z)^n$ with $\sum_{i\in I_j} a_i$ invertible for some $j$, 
the orbit of $\mbf{a}$ under the action of $P_l(S_n)$ on $(\Z/l^s\Z)^n/\Gamma$ has the same size as the orbit of $\mbf{a}$ under the action of $P_l(S_n)$ on $(\Z/l^s\Z)^n$.
\end{lemma}
\begin{proof}

We will show that the orbits have the same size by showing that the stabilizers have the same size.  Let $\tau \in P_l(S_n)$ be in the stabilizer of $\mbf{a}$ in $(\Z/l^s\Z)^n/\Gamma$. Then there exists $\phi \in \Gamma$ such that $\tau(\mbf{a}) = \gamma_\phi \mbf{a}$. Let $\gamma = \gamma_\phi$. We want to show that we must then have $\gamma = 1$ (because this would mean that $\tau$ is in the stabilizer under the action of $P_l(S_n)$ on $(\Z/l^s\Z)^n$).

Note that since $\tau$ permutes the $i$ in $I_j$, we have $\sum_{i\in I_j} \tau(\mbf{a})_i = \sum_{i \in I_j} a_i$.  So if $\tau(\mbf{a}) = \gamma\mbf{a}$, we must have $\sum_{i \in I_j} a_i = \gamma\sum_{i \in I_j} a_i$. Thus since $\sum_{i \in I_j} a_i$ is invertible, we can conclude that $\gamma = 1$.
\qedhere
\end{proof}

\begin{lemma}\label{irrH1} For any $l$, let $\mbf{a} = (a_1, \dots, a_{l^{k}})$ with $\sum_{i=1}^{l^{k}} a_i$ invertible. Then 
$$|\text{orbit}(\mbf{a})| \geq l^{k}$$ under the action of $P_l(S_{l^{k}})$ on $(\Z/l^s\Z)^{l^k}$.  
\end{lemma}

\begin{proof}

We will prove the lemma by induction. 

\noindent \textbf{Base Case:} For $k = 1$, we have $\mbf{a} = (a_1, \dots, a_l)$ with $\sum_{i=1}^l a_i$ invertible. If $a_i$ were equal for all $i$, say equal to $x$, then we would have $\sum_{i=1}^l a_i = lx$, which is not invertible. So we can conclude that there exist $j \neq j'$ such that $a_j \neq a_{j'}$. Hence the size of the orbit is at least $2$, so it must be $l$ since it must divide $|P_l(S_l)| = l$.

\noindent \textbf{Induction Step:} Assume that the lemma is true for $k-1$.

For $1 \leq j \leq l$, let $J_j$ denote the $j$th sub-block of $l^{k-1}$ entries in $\{1, \dots, l^k\}$. If $l \divides \sum_{i \in J_j} a_i$ for all $j$, then we would have 
$$l \divides \sum_{j=1}^l \left(\sum_{i \in J_j}a_i\right) = \sum_{i=1}^{l^k} a_i $$
and so $\sum_{i = 1}^{l^k} a_i$ would not be invertible. So we can conclude that there exists a $j$ such that $l \nmid \sum_{i \in J_j} a_i$ and hence $\sum_{i \in J_j} a_i$ is invertible. Without loss of generality, we may assume that $j = 1$ and so $\sum_{i \in J_1} a_i$ is invertible.

Consider the copy of $P_l(S_{l^{k-1}}) \subset P_l(S_{l^k})$ that acts on $J_1$.  Then the orbit of $\mbf{a}$ under the action of $P_l(S_{l^{k-1}}) \subset P_l(S_{l^k})$ will have the same size as the orbit of $(a_1, \dots a_{l^{k-1}})$ under the action of $P_l(S_{l^{k-1}})$. And by the induction hypothesis, the orbit of $(a_1, \dots a_{l^{k-1}})$ under the action of $P_l(S_{l^{k-1}})$ has size at least $l^{k-1}$.  Thus the orbit of $\mbf{a}$ under the action of $P_l(S_{l^k})$ has size at least $l^{k-1}$.

Let $x = \sum_{i \in J_1} a_i$. If $\sum_{i\in J_j} a_i$ were equal to $\sum_{i \in J_1} a_i$ for all $j$, then we would have 
$$\sum_{i=1}^{l^k} a_i = \sum_{j=1}^l \left(\sum_{i \in J_j}a_i\right) = \sum_{j=1}^l x = lx,$$
which is not invertible. So we can conclude that there exist $j \neq 1$ such that $\sum_{i \in J_1} a_i \neq \sum_{i \in J_{j}} a_i$. Without loss of generality, we may assume that $j = l$, and so
$$\sum_{i \in J_1} a_i \neq \sum_{i \in J_l} a_i.$$
Then for $\tau$ the permutation $i \mapsto i+l^{k-1} \mod l^{k}$, we get 
$$\tau(\mbf{a}) = (a_{l^{k-1}+1}, \dots, a_{l^{k}}, a_1, \dots, a_{l^{k-1}}).$$
And since $\sum_{i \in J_1} a_i \neq \sum_{i \in J_l} a_i$, this is not equal to any of the $\tau(\mbf{a})$ for $\tau \in P_l(S_{l^{k-1}})$, where $P_l(S_{l^{k-1}})$ acts on $J_1$.  Thus the size of the orbit is at least $l^{k-1} + 1$, and so it must be at least $l^{k}$ since it must divide $|P_l(S_{l^{k}})|$ which is a power of $l$.
\qedhere
\end{proof}

\begin{corollary}\label{irrH1cor1} Let $\mbf{a} = (a_1, \dots, a_n)$ with $\sum_{i \in I_j} a_i$ invertible and $\sum_{i \in I_{j'}} a_i$ invertible for some $j' \neq j$. Then
 $$|\text{orbit}(\mbf{a})| \geq l^{k_j+v_l(n)}$$
under the action of $P_l(S_n)$ on $(\Z/l^s\Z)^n$. 
\end{corollary}

\begin{proof}
Note that $P_l(S_n)$ is the direct product $\times_j P_l(S_{l^{k_j}})$ acting on $(\Z/l^s\Z)^n = \prod_{j=1}^{\xi_l(n)} (\Z/l^s\Z)^{l^{k_j}}$. So the orbit of $\mbf{a}$ under the action of $P_l(S_n)$ is equal to the product of the orbits of $\mbf{a}$ under the action of $P_l(S_{l^{k_j}})$. Thus it suffices to find the sizes of the orbit of $(a_i)_{i \in I_j}$ under the action of $P_l(S_{l^{k_j}})$ and the orbit of $(a_i)_{i \in I_{j'}}$ under the action of $P_l(S_{l^{k_{j'}}})$ and multiply these to find the minimum size of the orbit of $\lambda_{\mbf{a}}$ under the action of $P_l(S_n)$.  By Lemma \ref{irrH1}, these orbits will have size at least $l^{k_j}$ and $l^{k_{j'}}$ respectively. Thus the orbit of $\mbf{a}$ will have size at least $l^{k_j + k_{j'}}$. And for all $j'$, $k_{j'} \geq v_l(n)$; so $l^{k_j + k_{j'}} \geq l^{k_j + k_{v_l(n)}}$. Hence $|\text{orbit}(\mbf{a})| \geq l^{k_j+v_l(n)}$. 
\qedhere
\end{proof}

\begin{corollary}\label{irrH1cor} For $\mbf{a} = (a_1, \dots, a_n)$ with $\sum_{i \in I_j} a_i$ invertible and $\sum_{i \in I_{j'}} a_i$ invertible for some $j' \neq j$, we can conclude that $$|\text{orbit}(\Psi_{\mbf{a}})| \geq l^{k_j+v_l(n)}$$
under the action of $P_l(S_n)$ on $\text{Irr}((\mu_{l^s})^n)$.
\end{corollary}
\begin{proof}
By Lemma \ref{changepersp}, the orbit of $\Psi_{\mbf{a}}$ under the action of $P_l(S_n)$ on $\text{Irr}((\mu_{l^s})^n)$ has the same size as the orbit of $\mbf{a}$ under the action of $P_l(S_n)$ on $(\Z/l^s\Z)^n/\Gamma$. By Lemma \ref{gammaaction}, the orbit in $(\Z/l^s\Z)^n/\Gamma$ is the same as the orbit in $(\Z/l^s\Z)^n$. And by Corollary \ref{irrH1cor1}, the orbit of $\mbf{a}$ has size greater than or equal to $l^{k_j+v_l(n)}$.
\end{proof}

We can now complete the proof in the case $l \divides q - 1$, $l \divides n$, $n \neq l^t$.

\begin{proof}[Proof of Theorem \ref{PGLn} for the case $l \divides q - 1$, $l \divides n$, $n \neq l^t$.]

Recall that $P = (\mu_{l^s})^n/\{(x,x,\dots,x)\} \rtimes P_l(S_n)$. Let $\rho$ be a faithful representation of $P$ of minimum dimension.  Let $\rho = \oplus_{j=1}^{\xi_l(n)-1} \varphi_j$ be the decomposition into irreducibles. Let $C = Z(P)$. By Lemma \ref{BMKS3.5}, if $\chi_j$ are the central characters of $\varphi_j$, then $\{\chi_j|_{C[l]}\}$ form a basis for $\widehat{C[l]}$. Let $\mbf{b}^j$  be the dual basis for $C[l]$ so that $\varphi_j(\mbf{b}^i)$ is trivial for $i \neq j$.  

Let $S' = (\mu_{l^s})^n/\{(x,\dots,x)\}$. For $j \leq \xi_l(n)$, let
$$S_j = \{\mbf{b} \in S': b_i = 1 \text{ for } i \notin I_j\}.$$
For $j \leq \xi_l(n)$, define $\mbf{e}^j \in S'$ by 
$$(\mbf{e}^j)_i = \begin{cases} \zeta_l, &i \in I_j\\
1, &i \notin I_j \end{cases}.$$ Then $\{\mbf{e}^j\}$ is a basis for $C[l]$.  Write $\mbf{b}^j = \oplus_{i} a_{i,j}\mbf{e}^i$. Then $\varphi_j$ will be non-trivial on $S_j \cap C[l]$ if and only if $a_{j,j} \neq 0$. Note that $(a_{i,j})$ is the change of basis matrix from $\{\mbf{e}^j\}$ to $\{\mbf{b}^j\}$. Since it is a change of basis matrix, it must be invertible. By Lemma \ref{claim1}, we can rearrange the $\mbf{b}^j$ such that for all $i$ $a_{i,i} \neq 0$ in the change of basis matrix from $\{\mbf{e}^j\}$ to $\{\mbf{b}^j\}$. And so we can rearrange the $\rho_j$ such that $\chi_j|_{C[l]}$ is non-trivial on $S_j \cap C[l]$ and thus $\varphi_j$ is non-trivial on $S_j \cap C[l]$.

Fix $j \leq \xi_l(n)-1$ and let $\varphi = \varphi_j$. By Clifford's Theorem (Theorem \ref{cliff}), $\varphi|_{S'}$ decomposes into a direct sum of irreducibles in the following manner:
$$\varphi|_{S'} \cong  \left( \oplus_{i=1}^c  \lambda_i \right)^{\oplus d}, \text{ for some } c, d,$$ 
with the $\lambda_i$ non-isomorphic, and $P_l(S_{n})$ acts transitively on the isomorphism classes of the $\lambda_i$, so the $\lambda_i$ have the same dimension and the number of $\lambda_i$, $c$, divides $|P_l(S_{n})|$. Since $\varphi$ is non-trivial on $S_j \cap C[l] \subset S'[l]$, one of the $\lambda_i$ must be non-trivial on $S_j \cap C[l]$. Without loss of generality assume the $\lambda_1$ is non-trivial on $S_j \cap C[l]$.

Note that the irreducible representations of $S'$ are in bijection with irreducible representations of $(\mu_{l^s})^n$ which are trivial on $\{(x,\dots,x)\}$.  By Lemma \ref{corrlemma}, the irreducible reprsentations of $(\mu_{l^s})^n$ are given by $\Psi_{\mbf{a}}$ with $\mbf{a} \in (\Z/l^s\Z)^n/\Gamma$, for $\Gamma = \text{Gal}(k(\zeta_{l^s})/k)$, and if $\Psi_{\mbf{a}}$ is non-trivial on $(\mu_{l^s})^n[l]$, then $\Psi_\mbf{a}$ has dimension $[k(\zeta_{l^s}):k]$.  So since $\lambda_1$ is non-trivial on $S_j \cap C[l] \subset (\mu_{l^s})^n[l]$, we must have $\dim(\lambda_1) = [k(\zeta_{l^s}):k]$, and so $\dim(\lambda_i) = [k(\zeta_{l^s}):k]$ for all $i$. 

Note that for $\mbf{x} = (x,\dots,x)$, $\psi_{\mbf{a}}(\mbf{x}) = x^{\sum_{i=1}^n a_i}$.  So $\Psi_\mbf{a}$ will be trivial on $\{(x,\dots,x)\}$ if and only if $\sum_{i=1}^n a_i = 0$. So $\lambda_1 \cong \Psi_{\mbf{a}}$ for some $\mbf{a} \in (\Z/l^s\Z)^n/\Gamma$ with $\sum_{i=1}^n a_i = 0$.

Also, since $\lambda_1 \cong \Psi_{\mbf{a}}$ is non-trivial on $S_j \cap C[l]$, we must have
$$\text{ }\sum_{i \in I_j} a_il^{s-1} \neq 0.$$ 
Since $\sum_{i=1}^n a_i = 0$, we must have $\sum_{i=1}^n a_il^{s-1} = 0$; hence since $\sum_{i \in I_j} a_il^{s-1} \neq 0$, we must also have $\sum_{i \in I_{j'}} a_il^{s-1} \neq 0$ for some $j' \neq j$. Therefore, we must have $\sum_{i \in I_j} a_i$ invertible and $\sum_{i \in I_{j'}} a_i$ invertible for some $j' \neq j$. So by Corollary \ref{irrH1cor}, the orbit of $\lambda_i = \Psi_{\mbf{a}}$ under the action of $P_l(S_n)$ has size at least $l^{k_j + v_l(n)}$. So $c \geq l^{k_j + v_l(n)}$. Thus for $\lambda = \rho_j$,
$$\dim(\lambda) \geq l^{k_j + v_l(n)}[k(\zeta_{l^s}):k].$$
Hence
\begin{align*}
\dim(\rho) &= \sum_{j=1}^{\xi_l(n)-1} \dim(\rho_j)\\
&\geq \sum_{j=1}^{\xi_l(n)-1} l^{k_j + v_l(n)}[k(\zeta_{l^s}):k]\\
&= l^{v_l(n)}\left(\sum_{j=1}^{\xi_l(n)-1} l^{k_j}\right)[k(\zeta_{l^s}):k]\\
&= l^{v_l(n)}(n-l^{v_l(n)})[k(\zeta_{l^s}):k]
\end{align*}

We can construct a faithful representation of this dimension in the following manner.  For each $j \leq \xi_l(n)-1$, let
$$\lambda_j:  S' \to GL(k(\zeta_{l^s}))$$ be the irreducible representation of dimension $[k(\zeta_{l^s}):k]$ given by $\lambda_j = \Psi_{\mbf{a}}$, where $a_{i_j} = 1$, $a_n = -1$ and all other $a_i$ are $0$.

The stabilizer of $\mbf{a}$ under the action of $P_l(S_n)$ is given by those permutations which fix $I_j$ and $I_{\xi_l(m)}$. There are $\frac{|S_n|_l}{l^{k_j+v_l(n)}}$ such permutations; thus the orbit of $\mbf{a}$ has size $l^{k_j+v_l(n)}$. So the orbit of $\Psi_{\mbf{a}}$ under the action of $P_l(S_{n})$ on the irreducible representations (not isomorphism classes) of $S'$ has size $l^{k_j+v_l(n)}$. Let $\text{Stab}_j$ be the stabilizer of $\lambda_j$ in $P_l(S_{n})$ (which has order $\frac{|P_l(S_{n})|}{l^{k_j + v_l(n)}}$). We can extend $\lambda_j$ to $S' \rtimes \text{Stab}_j$ by defining $\lambda_j(\mbf{b},\tau) = \tau_{\lambda_j}(\mbf{b}) = \lambda_j(\mbf{b})$ (since $\tau \in \text{Stab}_j$). Let $\rho_j = \text{Ind}_{S' \rtimes \text{Stab}_j}^P \lambda_j$. Then $\rho_j$ has dimension 
$$[P_l(S_{n}): \text{Stab}_j]\dim(\lambda_j) = l^{k_j + v_l(n)}[k(\zeta_{l^s}):k],$$ and $\rho_j$ is non-trivial on $S_j \cap C[l]$. Then let $\rho = \oplus_{j=1}^{\xi_l(n)-1} \rho_j$.  Then $\rho$ has dimension $l^{v_l(n)}(n-l^{v_l(n)})[k(\zeta_{l^s}):k]$, and by Lemma \ref{BMKS3.4}, $\rho$ is faithful.
Thus we have shown that for $n \neq l^t$,
\begin{align*}
\ed_k(PGL_{n}(\F_q),l)) &= l^{v_l(n)}(n-l^{v_l(n)})[k(\zeta_{l^s}):k].
\qedhere
\end{align*}
\end{proof}

  \subsection{\texorpdfstring{Case 2: $l \divides q-1$, $n = l^t$}{Case 2: l divides q-1, n = lt}}
 
 \begin{definition} For $1 \leq j \leq l$, let $J_j$ denote the $j$th sub-block of $l^{k-1}$ entries in $\{1, \dots, l^k\}$. Let $A_j = \sum_{i \in J_j} a_i$. \end{definition}
 
For the proof of Theorem \ref{PGLn} in the case $l \divides q-1$, $n = l^t$, we will need the following lemmas.
 
 \begin{lemma} For $P \in \syl_n(PGL_l(\F_q))$ in the case $l \divides q - 1$, $n = l^t$ 
$$Z(P) \cong \langle (1, \dots, 1, \zeta_l, \dots, \zeta_l, \zeta_l^2, \dots, \zeta_l^2, \dots, \zeta_l^{(l-1)}, \dots, \zeta_l^{(l-1)}) \rangle \cong \mu_l.$$
\end{lemma}

\begin{proof}
Just as in the case $n \neq l^t$, in order for $(\mbf{b}, \tau)$ to be in the center we must have $\tau = \text{Id}$ and $\tau(\mbf{b}) = \mbf{b} \mod \{(x,x,\dots,x)\}$ for all $\tau \in P_l(S_n)$.  

Note that for each $i, i'$ in the same $J_j$, there exists $\tau \in P_l(S_n)$ that sends $i$ to $i'$ and fixes some other index. Since there is an index that is fixed by $\tau$, in order for $\tau(\mbf{b})$ to equal $\mbf{b}\mbf{x}$ for $\mbf{x} = (x,x,\dots,x)$, we must have $x = 1$ and so $\tau(\mbf{b}) = \mbf{b}$. So $b_1 = \dots = b_{l^{t-1}}$, $b_{l^{t-1}+1} = \dots = b_{2l^{t-1}}$, $\dots$, $b_{l^t-l^{t-1}+1} = \dots = b_{l^t}.$  If we consider the last generator, $\sigma_1^{t}$, we see that we must have $b_{i+l^{t-1}} = b_{i}x$ for some fixed $x = \zeta_l^{a}$.  Thus $\mbf{b}$ must be of the form
$$\mbf{b} = (b, \dots, b, b\zeta_l^a, \dots b\zeta_l^a, \dots, b\zeta_l^{(l-1)}, \dots, b\zeta_l^{(l-1)}).$$
In $PGL_{l^t}(\F_q)$, the set of all elements of this form is a cyclic group of order $l$ generated by
$$\mbf{b} = (1, \dots, 1, \zeta_l, \dots, \zeta_l, \zeta_l^2, \dots, \zeta_l^2, \dots, \zeta_l^{(l-1)}, \dots, \zeta_l^{(l-1)}).$$
So we have 
\begin{align*} Z(P) &\cong \mu_l.
\qedhere
\end{align*}
\end{proof}

\begin{lemma}\label{gammaaction2} Let $l \divides q - 1$, $l \neq 2$, $\Gamma = \text{Gal}(k(\zeta_{l^s})/k),$ $n = l^t$, and $\mbf{a} \in (\Z/l^s\Z)^n$ with $$\sum_{i=1}^{n} a_i = 0 \text{ and } A_j \text{ invertible for some } j.$$ 
Then the orbit of $\mbf{a}$ under the action of $P_l(S_n)$ on $(\Z/l^s\Z)^n/\Gamma$ has the same size as the orbit of $\mbf{a}$ under the action of $P_l(S_n)$ on $(\Z/l^s\Z)^n$.
\end{lemma}

\begin{proof}[Proof of Lemma \ref{gammaaction2}]

If $s = 1$, then $\Gamma$ is trivial; thus $(\Z/l^s\Z)^n/\Gamma = (\Z/l^s\Z)^n$ and the lemma is obvious. So it suffices to prove the lemma for $s \neq 1$. We will show that the orbits have the same size by showing that the stabilizers have the same size.     Let $\tau \in P_l(S_n)$ be in the stabilizer of $\mbf{a}$ in $(\Z/l^s\Z)^n/\Gamma$. Then there exists $\phi \in \Gamma$ such that $\tau(\mbf{a}) = \gamma_\phi \mbf{a}$. Let $\gamma = \gamma_\phi$. We want to show that we must then have $\gamma = 1$ (because this would mean that $\tau$ is in the stabilizer under the action of $P_l(S_n)$ on $(\Z/l^s\Z)^n$).

Without loss of generality, we may assume that $A_1$ is invertible. Note that $\tau$ permutes the $J_j$, and the permutation is given by $j \mapsto j - y \mod l$ for some $y \in \Z/l\Z$. So $\sum_{i \in J_{1+y}} a_i = \gamma\sum_{i \in J_1} a_i = \gamma A_1$. 

Suppose by way of contradiction that $y \neq 0$. Then in general $\sum_{i \in J_j} a_i = \gamma^{k}A_1$, where $j = 1+ky \mod l$.  So the collection of $\sum_{i \in J_j} a_i$ for $j \in \Z/l\Z$ is the same as the collection of $\gamma^{k}A_1$ for $k \in \Z/l\Z$. 
Then we have 
$$0 = \sum_{i=1}^n a_i = \sum_{j=1}^l \left( \sum_{i \in J_j} a_i \right) = \sum_{k=0}^{l-1} \gamma^{k}A_1 = A_1 \sum_{k=0}^{l-1}\gamma^{k}.$$
So since $A_1$ is invertible, we can conclude that 
$$0 = \sum_{k=0}^{l-1}\gamma^{k}.$$
Furthermore, note that since $1 = 1 + ly \mod l$, we will have $\sum_{i \in J_1} a_i = \gamma^l \sum_{i \in J_1} a_i$, and hence since $\sum_{i \in J_1} a_i$ is invertible, we can conclude that $\gamma^l = 1$. So $|\gamma| \divides l$. 

Note that since $l \neq 2$, $(\Z/l^s\Z)^\times$ is cyclic and hence $\Gamma \hookrightarrow (\Z/l^s\Z)^\times$ is cyclic. Thus $\Gamma$ has a unique subgroup of order $l$; this subgroup consists of multiplication by $1 + b l^{s-1}$ for $0 \leq b < l$. If $\gamma = 1$, then $\sum_{k=1}^{l-1} \gamma^k = l$. If $\gamma \neq 1$, then since $|\gamma| \divides l$, $\gamma$ generates exactly this subgroup of order $l$. Thus we have
\begin{align*}
0 &= \sum_{k=0}^{l-1} \gamma^k\\
&= \sum_{b = 0}^{l-1} 1+bl^{s-1}\\
&= l + l^{s-1}\sum_{b=0}^{l-1} b\\
&= l + l^{s-1}\frac{(l-1)l}{2}\\
&= l \qquad \qquad \qquad \qquad \text{ in } \Z/l^s\Z \text{ since } l \neq 2
\end{align*}
So for any $\gamma$ with $|\gamma| \divides l$, we will have $0 = \sum_{k=0}^{l-1} \gamma^k = l$. This is a contradiction since $s \neq 1$. Therefore we can conclude that $y = 0$. So $\tau$ stabilizes the sub-blocks $J_j$. In particular, it stabilizes $J_1$; hence 
$$\sum_{i \in J_1} a_i = \gamma\sum_{i \in J_1} a_i.$$
Thus since $\sum_{i \in J_1} a_i = A_1$ is invertible, we must have $\gamma = 1$.
\qedhere
\end{proof}

\begin{lemma}\label{irrH2} Let $l \divides q - 1$, $l \neq 2$, $n = l^t$, and $\mbf{a} \in (\Z/l^s\Z)^n$ with 
$$\sum_{i=1}^{n} a_i = 0, \sum_{j=1}^{l} (j-1)A_j \text{ invertible, and }A_j \text{ invertible for some }j.$$ 
Then
$$|\text{orbit}(\mbf{a})| \geq l^{2t-1}$$ under the action of $P_l(S_n)$ on $(\Z/l^s\Z)^n$. 
\end{lemma}

\begin{lma}[Lemma \ref{irrH2}] Let $l \divides q - 1$, $l \neq 2$, $n = l^t$, and $\mbf{a} \in (\Z/l^s\Z)^n$ with 
$$\sum_{i=1}^{n} a_i = 0, \sum_{j=1}^{l} (j-1)A_j \text{ invertible, and }A_j \text{ invertible for some }j.$$ 
Then
$$|\text{orbit}(\mbf{a})| \geq l^{2t-1}$$ under the action of $P_l(S_n)$ on $(\Z/l^s\Z)^n$. 
\end{lma}

\begin{proof}[Proof of Lemma \ref{irrH2}]

Since $A_j$ is invertible and $\sum_{i=1}^n a_i = 0$ is not invertible, we must also have $A_{j'}$ invertible for some $j' \neq j$. So by Lemma \ref{irrH1}  we can conclude that that the orbit of $\mbf{a}$ under the action of $P_l(S_{l^{t-1}}) \times P_l(S_{l^{t-1}}) \subset P_l(S_n)$ has size at least $l^{t-1} \cdot l^{t-1} = l^{2t-2}$. 

Without loss of generality, we may assume that $j = 1$ and so we have $A_1$ is invertible. If $A_j$ were equal to $A_1$ for all $j$, then we would have 
$$\sum_{j=1}^{l} (j-1)A_j = \sum_{j=1}^{l} (j-1)A_1 = \frac{l(l-1)}{2}A_1,$$
which is divisible by $l$ (since $l$ is odd) and so is not invertible. So we can conclude that there exist $j' \neq 1$ such that $A_1 \neq A_{j'}$. Without loss of generality, we may assume that $j' = l$, and so $A_1 \neq A_l.$

Then for $\tau$ the permutation $i \mapsto i+l^{t-1} \mod l^{t}$, we get 
$$\tau(\mbf{a}) = (a_{l^{t-1}+1}, \dots, a_{l^{t}}, a_1, \dots, a_{l^{t-1}}).$$
And since $\sum_{i \in J_1} a_i \neq \sum_{i \in J_l} a_i$, this is not equal to any of the $\sigma(\mbf{a})$ for $\sigma \in P_l(S_{l^{t-1}}) \times P_l(S_{l^{t-1}})$.  Thus the size of the orbit is at least $l^{2t-2} + 1$, and so it must be at least $l^{2t-1}$ since it must divide $|P_l(S_{l^{t}})|$ which is a power of $l$.

For $\mbf{a} = (1,0,\dots,-1)$, the stabilizer is given by those permutations which fix $1$ and $n$, there are $\frac{|S_n|_l}{l^{2t-1}}$ such permutations; thus the orbit has size $l^{2t-1}$. 
\qedhere
\end{proof}

Putting the above two lemmas together, we obtain the following corrolary:

\begin{corollary}\label{irrH2cor} For $l \divides q - 1$, $l \neq 2$, $n = l^t$, $\mbf{a} \in (\Z/l^s\Z)^n$ with 
$$\sum_{i=1}^{n} a_i = 0, \sum_{j=1}^{l} (j-1)A_j \text{ invertible, and }A_j \text{ invertible for some }j,$$ we can conclude that
$$|\text{orbit}(\Psi_{\mbf{a}})| \geq l^{2t-1}$$ 
under the action of $P_l(S_n)$ on $\text{Irr}((\mu_{l^s})^n)$.
\end{corollary}

\begin{proof}
By Lemma \ref{changepersp}, the orbit of $\Psi_{\mbf{a}}$ under the action of $P_l(S_n)$ on $\text{Irr}((\mu_{l^s})^n)$ has the same size as the orbit of $\mbf{a}$ under the action of $P_l(S_n)$ on $(\Z/l^s\Z)^n/\Gamma$. By Lemma \ref{gammaaction2}, the size of the orbit in $(\Z/l^s\Z)^n/\Gamma$ is the same as size of the orbit in $(\Z/l^s\Z)^n$. And by Lemma \ref{irrH2}, the orbit will have size at least $l^{2t-1}$. 
\end{proof}

We can now complete the proof in the case $l \divides q-1$, $n = l^t$.

\begin{proof}[Proof of Theorem \ref{PGLn} for the case $l \divides q-1$, $n = l^t$.]

Recall that $P \cong (\mu_{l^s})^n/\{(x,x,\dots,x)\} \rtimes P_l(S_n).$  Let $\rho$ be a faithful representation of $P$ of minimum dimension (and so it is also irreducible since the center has rank $1$.)   Let $S' = (\mu_{l^s})^n/\{(x,\dots,x)\}$. By Clifford's Theorem (Theorem \ref{cliff}), $\rho|_{S'}$ decomposes into a direct sum of irreducibles in the following manner:
$$\rho|_{S'} \cong  \left( \oplus_{i=1}^c  \lambda_i \right)^{\oplus d}, \text{ for some } c, d,$$ 
with the $\lambda_i$ non-isomorphic, and $P_l(S_n)$ acts transitively on the $\lambda_i$, so the $\lambda_i$ have the same dimension and the number of $\lambda_i$, $c$, divides $|P_l(S_n)|$ (which is a power of $l$), so $c$ is a power of $l$. Also, since $\rho$ is faithful, it is non-trivial on $Z(P)$, thus one of the $\lambda_i$ must be non-trivial on $Z(P) \subset (\mu_{l^s})^n[l]$.   Without loss of generality assume the $\lambda_1$ is non-trivial on $Z(P)$.

Note that the irreducible representations of $S'$ are in bijection with irreducible representations of $(\mu_{l^s})^n$ which are trivial on $\{(x,\dots,x) : x^n = 1\}$.  By Lemma \ref{corrlemma}, the irreducible reprsentations of $(\mu_{l^s})^n$ are given by $\Psi_{\mbf{a}}$ with $\mbf{a} \in (\Z/l^s\Z)^n/\Gamma$, for $\Gamma = \text{Gal}(k(\zeta_{l^s})/k)$, and if $\Psi_{\mbf{a}}$ is non-trivial on $(\mu_{l^s})^n[l]$, then $\Psi_\mbf{a}$ has dimension $[k(\zeta_{l^s}):k]$.  Since $\lambda_1$ is non-trivial on $Z(P) \subset (\mu_{l^s})^n[l]$, we must have $\dim(\lambda_1) = [k(\zeta_{l^s}):k]$, and so $\dim(\lambda_i) = [k(\zeta_{l^s}):k]$ for all $i$. 

And $\Psi_\mbf{a}$ will be trivial on $\{(x,\dots,x) : x^n = 1\}$ if and only if $\sum_{i=1}^n a_i = 0$. So $\lambda_1 \cong \Psi_{\mbf{a}}$ for some $\mbf{a} \in (\Z/l^s\Z)^n/\Gamma$ with $\sum_{i=1}^n a_i$. 

Recall that $J_j$ denotes the $j$th sub-block of $l^{t-1}$ entries in $\{1, \dots, l^t\}$. Since $\lambda_i$ is non-trivial on 
$$Z(P) = \langle (1, \dots, 1, \zeta_l, \dots, \zeta_l, \zeta_l^2, \dots, \zeta_l^2, \dots, \zeta_l^{(l-1)}, \dots, \zeta_l^{(l-1)})\rangle,$$ we must have that 
$$0 \neq \sum_{j=1}^{l} ((j-1)\sum_{i \in J_j} a_il^{s-1}) = \sum_{j=1}^l (j-1) A_jl^{s-1}.$$
Thus $\sum_{j=1}^{l} (j-1)A_j$ is invertible, and so we must have $A_j$ invertible for some $j$.  So by Corollary \ref{irrH2cor}, the orbit of $\lambda_1$ under the action of $P_l(S_{n})$ will have size at least $l^{2t-1}$. So $c \geq l^{2t-1}$. Thus $\dim(\rho) \geq l^{2t-1}[k(\zeta_{l^s}):k]$.

We can construct a faithful representation of this dimension in the following manner.  Let $\mbf{a} = (1, 0, \dots, -1)$ and consider
$$\Psi_{\mbf{a}}:  S' \to GL(k(\zeta_{l^s})) = GL_d(k),$$  
where $d = [k(\zeta_{l^s}):k]$. The stabilizer of $\mbf{a}$ under the action of $P_l(S_n)$ is given by those permutations which fix $1$ and $n$. There  are $\frac{|S_n|_l}{l^{2t-1}}$ such permutations; thus the orbit of $\mbf{a}$ has size $l^{2t-1}$. So the orbit of $\Psi_{\mbf{a}}$ under the action of $P_l(S_{n})$ on the irreducible representations (not isomorphism classes) of $S'$ has size $l^{2t-1}$. Let $\text{Stab}_{\mbf{a}}$ be the stabilizer of $\Psi_{\mbf{a}}$ in $P_l(S_{n})$ (which has order $\frac{|P_l(S_{n})|}{l^{2t-1}}$). We can extend $\Psi_{\mbf{a}}$ to $S' \rtimes \text{Stab}_{\mbf{a}}$ by defining $\Psi_{\mbf{a}}(\mbf{b},\tau) = \tau_{\Psi_{\mbf{a}}}(\mbf{b}) = \Psi_{\mbf{a}}(\mbf{b})$ (since $\tau \in \text{Stab}_{\mbf{a}}$). Let $\rho = \text{Ind}_{S' \rtimes \text{Stab}_{\mbf{a}}}^P \Psi_{\mbf{a}}$. Then $\rho$ has dimension 
$$[P_l(S_{n}): \text{Stab}_{\mbf{a}}]\dim(\Psi_{\mbf{a}}) = l^{2t-1}[k(\zeta_{l^s}):k],$$ and $\rho$ is non-trivial (and hence faithful) on $Z(P)$. So this is a faithful representation of $P$ of dimension  $l^{2t-1}[k(\zeta_{l^s}):k]$.
Thus we have shown that for $n = l^t$,
\begin{align*}
\ed_k(PGL_{n}(\F_q),l)) &= l^{2t-1}[k(\zeta_{l^s}):k].
\qedhere
\end{align*}
\end{proof}

\section{\texorpdfstring{The Projective Special Linear Groups and Quotients of $SL_n(\F_q)$ by cyclic subgroups of the center}{The Projective Special Linear Groups and Quotients of SLn(Fq) by cyclic subgroups of the center}}

$PSL_n(\F_q)$ is defined to be 
$$PSL_n(\F_q) = SL_n(\F_q)/Z(SL_n(\F_q)).$$
The center of $SL_n(\F_q)$ is given by 
$$Z(SL_n(\F_q)) = \{x\text{Id}_n : x \in \F_q, x^n = 1\}.$$ 
By looking at the Sylow $l$-subgroup calculated in the section on $SL_n(\F_q)$ and modding by $Z(SL_n(\F_q))$, we see that a Sylow $l$-subgroup of $PSL_n(\F_q)$ is isomorphic to
$$P \cong \{(\mbf{b}, \tau) \in (\mu_{l^s})^n/ \{(x,\dots,x) : x^n = 1\} \rtimes P_l(S_n) : \prod_{i=1}^n b_i = \text{sgn}(\tau)\}.$$

Note that for $n' | n$, we obtain a subgroup of $SL_n(\F_q)$ containing  $PSL_n(\F_q)$ of order $\frac{|SL_n(\F_q)|}{(n', \text{ }q-1)}$ by taking the quotient of $SL_n(\F_q)$ by the cyclic subgroup of order $n'$ given by $\{x\text{Id} : x \in \F_q, \text{ } x^{n'} = 1\}$. The Sylow $l$-subgroups will be given by

$$P \cong \{(\mbf{b}, \tau) \in (\mu_{l^s})^n/ \{(x,\dots,x) : x^{n'} = 1\} \rtimes P_l(S_n) : \prod_{i=1}^n b_i = \text{sgn}(\tau)\}.$$

\begin{theorem}\label{PSLn} Let $p$ be a prime, $q = p^r$, and $l$ a prime with $l \neq 2,p$.  Let $k$ be a field with $\text{char } k \neq l$. Let $s = v_l(q-1)$, $n' \divides n$, and let $G = SL_n(\F_q)/\{x\text{Id} : x \in \F_q, \text{ } x^{n'} = 1\}$. Let $v = \min(v_l(n'),s)$. Then if $l \nmid q-1$ or $l \nmid n'$, then $\ed_k(G,l) = \ed_k(SL_n(\F_q),l)$. And if $l \divides q-1$, $l \divides n'$, then 
$$\ed_k(G,l) = \begin{cases} 
2, &l = n' = n = 3, \text{ } s = 1\\
\ed_k(PGL_n(\F_q),l), &\text{otherwise }
\end{cases}.$$
Note that for $n' = n,$ $G = PSL_n(\F_q).$
\end{theorem}

\smallskip

If $l \nmid q - 1$ or $l \nmid n'$, then the Sylow $l$-subgroups of $G$ are isomorphic to the Sylow $l$-subgroups of $SL_n(\F_q)$. So we need only prove the theorem when $l \divides q-1,$ $l \divides n'$. Thus in this section, we will assume $l \divides q - 1$ and $l \divides n'$ (and hence $l \divides n$ since $n' \divides n$). And so we have $d = 1$, $s = v_l(q-1)$, and $n_0 = n$ in the notation used in the section on $GL_n(\F_q)$. By Corollary \ref{rootofunity}, we may assume that $k$ contains a primitive $l$-th root of unity, that is, we may assume that $k = k(\zeta_l)$. 

We will need the following lemma.

\begin{lemma}\label{Helper1}
Let $l \neq 2$, $\Gamma = \text{Gal}(k(\zeta_{l^s})/k)$, $\gamma \in \Gamma$, $\mbf{a} \in (\Z/l^s\Z)^l$ with $a_l$ invertible and
$$a_i = \gamma^i a_l + x \sum_{k=1}^i \gamma^{k-1} \text{ } \forall i.$$
 Then $\sum_{i=1}^l ia_i$ is not invertible.
\end{lemma}

\begin{proof}
Write $\gamma = c + bl$ for $l \nmid c$. Then
\begin{align*}
a_l &= \gamma^l a_l + x\sum_{k=1}^l \gamma^{k-1}\\
&= c^l a_l + x\sum_{k=1}^l c \mod l\\
&= c^l a_l \mod l
\end{align*}
So we must have $c^l = 1 \mod l$. Thus $\gamma^l = 1 \mod l$. Hence $0 = \gamma^l - 1 = (\gamma-1)^l \mod l.$ Therefore $\gamma = 1 \mod l$ and so $c = 1$. Then 
\begin{align*}
a_{i+1 \text{ mod } l} - a_i &= (\gamma^{i+1}a_l + x\sum_{k=1}^{i+1} \gamma^{k-1}) - (\gamma^i a_l + x\sum_{k=1}^i \gamma^{k-1})\\
&= a_l(\gamma^{i+1}-\gamma^i) + x\gamma^i\\
&= a_l((1+bl)^{i+1} - (1+bl)^i) + (1+bl)^{i} x\\
&= a_l(1+bl)^i(1+bl-1) + (1+bl)^ix\\
&=a_lbl(1+bl)^i + (1+bl)^{i} x\\
&=(1+bl)^i(a_lbl+x)
\end{align*}
 Note that $\sum_{i=1}^l a_{i+1 \text{ mod } l} = \sum_{i=1}^l a_i$. So
\begin{align*}
0 &= \sum_{i=1}^l a_{i+1 \text{ mod } l} - a_i\\
&= \sum_{i=1}^l (1+bl)^i(a_lbl+x)\\
&= (a_lbl+x)\sum_{i=1}^l (1+bl)^i\\
&= (a_lbl+x)\sum_{i=1}^l (1 + ibl)  \mod l^2\\
&= (a_lbl+x)l + bl\frac{l(l+1)}{2} \mod l^2\\
&= lx \mod l^2 \text{ since } l \neq 2
\end{align*}
So we can conclude that $l \divides x$. Then
\begin{align*}
\sum_{i=1}^l ia_i &= \sum_{i=1}^l i\left(\gamma^i a_l + x\sum_{k=1}^i \gamma^{k-1}\right)\\
&= a_l\sum_{i=1}^l i \mod l, \text{ since } l \divides x \text{ and } \gamma = 1 \mod l\\
&= a_l\frac{l(l+1)}{2} \mod l\\
&= 0 \mod l \text{ since } l \neq 2
\end{align*}
Therefore, $\sum_{i=1}^l ia_i$ is not invertible.
\end{proof} 

\subsection{\texorpdfstring{The irreducible representations of $T = \{\mbf{b} \in (\mu_{l^s})^n : \prod_{i=1}^n b_i = 1\}$}{The irreducible representations of T = b in mulsn : prodi=1n bi = 1}}

\begin{remark} Let $T = \{\mbf{b} \in (\mu_{l^s})^n : \prod_{i=1}^n b_i = 1\}$. Note that the map given by $\mbf{a} \mapsto \Psi_{\mbf{a}}|_T$ gives an isomorphism between $(\Z/l^s\Z)^n/\{(x,\dots,x)\}$ and $\widehat{T}$. 
\end{remark}

\begin{lemma}\label{auxlemmaPSLn} For any prime $l$, let  $\Gamma = \text{Gal}(k(\zeta_{l^s})/k)$. Let $T = \{\mbf{b} \in (\mu_{l^s})^n : \prod_{i=1}^n b_i = 1\}$ and let  $H = (\Z/l^s\Z)^{n}/\{(x,\dots,x)\}$. Consider the action of $\Gamma$ on $\widehat{T}$ given by $\phi(\lambda) = \phi \circ \lambda$ for $\lambda \in \widehat{T}$. Then the corresponding action of $\gamma_\phi \in (\Z/l^s\Z)^\times$ on $H = (\Z/l^s\Z)^n/\{(x,\dots,x)\} \cong \widehat{T}$ is given by scalar multiplication by $\gamma_\phi$. \end{lemma}

\begin{proof}

Let $\phi \in \Gamma,$  $\gamma = \gamma_\phi$, and $\psi_{\mbf{a}}|_T \in \widehat{T}$. By Lemma \ref{auxlemma}, $\phi \circ \psi_{\mbf{a}} = \psi_{\gamma \mbf{a}}$.  So
$$\phi \circ \psi_{\mbf{a}}|_T = (\phi \circ \psi_{\mbf{a}})_T = \psi_{\gamma \mbf{a}}|_T.$$  Thus the action of $\gamma$ in $(\Z/l^s\Z)^\times$ on $(\Z/l^s\Z)^n/\{(x,\dots,x)\}$ is given by scalar multiplication.
\qedhere
\end{proof}

\begin{lemma}\label{corrlemmaPSLn} For any prime $l$, let  $v = \min(v_l(n'),s)$. Let $\Gamma = \text{Gal}(k(\zeta_{l^s})/k) \hookrightarrow (\Z/l^s\Z)^\times$. Let $T = \{\mbf{b} \in (\mu_{l^s})^n : \prod_{i=1}^n b_i = 1\}$ and let $H = (\Z/l^s\Z)^{n}/\{(x,\dots,x)\}$. Then the irreducible representations of $T$ are in bijection with  $\mbf{a} \in H/\Gamma$, where the action of $\phi \in \Gamma$ is given by scalar multiplication by $\gamma_\phi$. The bijection is given by $\mbf{a} \in H/\Gamma \mapsto \Psi_{\mbf{a}}|_T: T \to GL_d(k)$, where $d = [k(\zeta_f):k]$ for $f = \frac{l^s}{\text{gcd}(a_i)}$. Furthermore, if $\Psi_{\mbf{a}}|_T$ is non-trivial on $T[l]$, then $l \nmid a_i$ for some $i$ and $\Psi_{\mbf{a}}$ has dimension $[k(\zeta_{l^s}):k]$.
\end{lemma}

\begin{proof} 
Note that 
$$k[T] = M_{n_1}(D_1) \times \dots \times M_{n_t}(D_t),$$
for $D_i$ a division algebra over $k$. So $k[T]$ is an \'etale $k$-algebra. Let $k_\text{sep}$ denote a separable extension of $k$. Let $\Gamma' = \text{Gal}(k_\text{sep}/k)$. For $A$ a finite \'etale $k$-algebra, consider the action of $\Gamma'$ on $\text{Hom}_k(A,k_\text{sep})$ given by $\phi(\lambda) = \phi \circ \lambda$ for $\phi \in \Gamma'$, $\lambda \in \text{Hom}_k(A, k_\text{sep})$. By \cite{Sz} (Theorem 1.5.4), the functor mapping a finite \'etale $k$-algebra $A$ to the finite set $\text{Hom}_k(A, k_\text{sep})$ gives an anti-equivalence between the category of finite \'etale $k$-algebras and the category of finite sets equipped with a continuous left $\Gamma'$-action; separable field extensions give rise to sets with transitive $\Gamma'$-action and Galois extensions give rise to $\Gamma'$-sets isomorphic to finite quotients of $\Gamma'$.  So if we write an \'etale $k$-algebra as a finite direct product of separable extensions of $k$, $A = L_1 \times L_2 \times \dots L_k$, then we can write $\text{Hom}_k(A, k_\text{sep}) = X_1 \coprod X_2 \coprod \dots \coprod X_t$, where $X_1, \dots, X_t$ are the orbits of $\Gamma'$ in $\text{Hom}_k(A, k_\text{sep})$. 

For $A = k[T] = \prod_i k_i$, then we have $\text{Hom}_{k}(k[T], k_\text{sep}) = \widehat{T}$. The irreducible representations of $T$ correspond to the factors $L_1, \dots, L_t$ in $k[T] = L_1 \times \dots \times L_t$ and these in turn correspond to $\widehat{T}/\Gamma'$, the orbits of $\Gamma'$ in $\widehat{T}$. The correspondence between $\widehat{T}$ and irreducible representations of $T$ is given by $\psi_{\mbf{a}}|_T \in \widehat{T}/\Gamma'$ corresponding to $\Psi_{\mbf{a}}|_T \in \text{Irr}(T)$, where $\psi_{\mbf{a}}(e_i) = (\zeta_{l^s})^{a_i}$ and  $\Psi_{\mbf{a}}(e_i) = \text{ multiplication by } (\zeta_{l^s})^{a_i}$.

Note that for any $\lambda \in \widehat{T}$, $\lambda$ is a map on $k(\zeta_{l^s})$. Also, note that a homomorphism $k_\text{sep} \to k_\text{sep}$ must send $\zeta_{l^s}$ to a $l^s$-th root of unity; so it will map $k(\zeta_{l^s})$ to itself.  Thus any homomorphism in $\Gamma'$, when restricted to $k(\zeta_{l^s})$ will be an element of $\text{Gal}(k(\zeta_{l^s})/k)$. So in our case, we may replace $\Gamma'$ by $\Gamma = \text{Gal}(k(\zeta_{l^s})/k).$ By Lemma \ref{auxlemmaPSLn}, the corresponding action of $\gamma_\phi \in (\Z/l^s\Z)^\times$ on $H \cong \widehat{T}$ is given by scalar multiplication by $\gamma_\phi$.

Note that $T[l]$ is given by $\mbf{b} \in T$ with $\mbf{b}^l = (1,\dots,1)$.  So $\mbf{b} = (\zeta_l^{x_1}, \dots \zeta_l^{x_n})$ for some $x_i \in \Z/l\Z$ with $l \divides \sum_{i=1}^n x_i$ . Hence 
\begin{align*}
\psi_{\mbf{a}}(\mbf{b}) = (\zeta_l)^{\sum_{i=1}^n a_i x_i}
\end{align*}
Thus if $l \divides a_i$ for all $i$, then $\psi_{\mbf{a}}$ will be trivial on $T[l]$, and hence $\Psi_{\mbf{a}}$ will be trivial on $T[l]$ as well. So if $\Psi_{\mbf{a}}$ is non-trivial on $T[l]$, we must have $l \nmid a_i$ for some $i$, and so $\text{gcd}(a_i) = 1$. Thus the $\Psi_{\mbf{a}}$ that are non-trivial on $T[l]$ have dimension $[k(\zeta_{l^s}):k]$.
  \qedhere
\end{proof}

\begin{lemma}\label{changeperspPSLn} For any prime $l$, let  $\Gamma = \text{Gal}(k(\zeta_{l^s})/k) \hookrightarrow (\Z/l^s\Z)^\times$ and the action of $\phi \in \Gamma$ be given by scalar multiplication by $\gamma_\phi$.  Let $T = \{\mbf{b} \in (\mu_{l^s})^n : \prod_{i=1}^n b_i = 1\}$. Let $H = (\Z/l^s\Z)^{n}/\{(x,\dots,x)\}$. Then the orbit of $\Psi_{\mbf{a}}$ under the action of $P_l(S_n)$ on $\text{Irr}(T)$ will have the same size as the orbit of $\mbf{a}$ under the action of $P_l(S_n)$ on $H/\Gamma$.  
\end{lemma}

\begin{proof}
By Lemma \ref{corrlemmaPSLn}, the irreducible representations of $T$ are in bijection with $\mbf{a} \in H/\Gamma$, where the action of $\phi \in \Gamma$ is given by scalar multiplication by $\gamma_\phi$. The bijection is given by $\mbf{a} \in H/\Gamma \mapsto \Psi_{\mbf{a}}.$

The action of $P_l(S_n)$ on $\text{Irr}(T)$ is given by 
$$\sigma(\lambda)(x) = \lambda(\sigma(x)),$$ 
for $\sigma \in P_l(S_n)$, $\lambda \in \text{Irr}(T)$, $x \in T$. And for $\Psi_{\mbf{a}} \in \text{Irr}(T),$ the orbit of $\Psi_{\mbf{a}}$ in $\text{Irr}(T)$ under the action of $P_l(S_n)$ corresponds to the orbit of $\psi_{\mbf{a}}$ in $\widehat{T}$ under the action of $P_l(S_n)$.

Under the isomorphism $\widehat{T} \cong H$, we have that the action of $P_l(S_n)$ on $\widehat{T}$, which is given by 
$$\sigma(\psi_\mbf{a})(x) = \psi_{\mbf{a}}(\sigma(x)) = \psi_{\sigma^{-1}(\mbf{a})}(x),$$
corresponds to the action of $P_l(S_n)$ on $H$ given by $\mbf{a} \mapsto \sigma^{-1}(\mbf{a}).$ 

Note that the action of $P_l(S_n)$ commutes with the action of $\Gamma$, so we get a corresponding action of $P_l(S_n)$ on $H/\Gamma$ under the bijection $\text{Irr}(T) \leftrightarrow H/\Gamma$, which is also given by $\mbf{a} \mapsto \sigma^{-1}(\mbf{a})$. The orbit of $\mbf{a}$ under this action will have the same size as the orbit of $\mbf{a}$ under the action $\mbf{a} \mapsto \sigma(\mbf{a})$. 

Therefore, the orbit of $\Psi_{\mbf{a}}$ under the action of $P_l(S_n)$ on $\text{Irr}(T)$ has the same size as the orbit of $\mbf{a}$ in $H/\Gamma$ under the action of $P_l(S_n)$ given by $\mbf{a} \mapsto \sigma(\mbf{a})$.
\end{proof}

\subsection{\texorpdfstring{Case 1: $l \divides q-1$, $l \divides n'$, $n = l^t$}{Case 1: l divides q-1, l divides n' divides n, n = lt}}

\begin{definition} For $n = l^t$, $1 \leq j \leq l$, let $J_j$ denote the $j$th sub-block of $l^{t-1}$ entries in $\{1, \dots, l^t\}$, and let  $A_j = \sum_{i \in J_j} a_i$. \end{definition}

\subsubsection{\texorpdfstring{Case 1a: $l = n' = n = 3, \text{ }s = 1$}{Case 1a: l = n' = n = 3, s = 1}}

\begin{proof}[Proof of Theorem \ref{PSLn} in the case $n = 3, s=1$]

For $l = n' = n = 3$, $s = 1$, we have 
$$P = \{\mbf{b} \in (\mu_{3})^3 : \prod_{i=1}^3 b_i = 1\}/\{(x,\dots,x)\} \rtimes \Z/3\Z.$$ Let $\tau$ be given by $i \mapsto i-1 \mod 3$, a generator of $\Z/3\Z$.  Note that any element of $\{\mbf{a} \in (\mu_{3})^3 : \prod_{i=1}^3 b_i = 1\}/\{(x,\dots,x)\}$ can be written as $\mbf{a} = (\zeta_3^a, \zeta_3^b, \zeta_{3}^{-a-b})$. And $\tau(\zeta_3^a, \zeta_3^b, \zeta_3^{-a-b}) = (\zeta_3^{-a-b}, \zeta_3^a, \zeta_3^{b})$, so $\tau(\mbf{b}) - \mbf{b} = (\zeta_3^{-2a-b}, \zeta_3^{a-b}, \zeta_3^{2b+a})$.  Note that for any $a,b \in \Z/3\Z$, the three quantities $-2a-b, a-b, 2b+a$ are equal in $\Z/3\Z$.  Thus $\tau$ is in the stabilizer of $\mbf{b}$ for all $\mbf{b} \in \{\mbf{b} \in (\mu_3)^3 : \prod_{i=1}^3 b_i = 1\}/\{(x,\dots,x)\}$.  Hence 
$$P = \{\mbf{a} \in (\mu_{3})^3 : \prod_{i=1}^n a_i = 1\}/\{(x,\dots,x)\} \times \Z/3\Z \cong \mu_3 \times \Z/3\Z.$$
Thus $\ed_k(PSL_3(\F_q),3) = 2$ in the case $l = n' = n = 3, s = 1$.  \qedhere
\end{proof}

\subsubsection{\texorpdfstring{The center in the case $n = l^t$ with $n > 3$ or $s \neq 1$}{The center in the case n = lt with n > 3 or s neq 1}}

\begin{lemma}\label{ZPSLn2} For 
$$P  = \{(\mbf{b}, \tau) \in (\mu_{l^s})^n/ \{(x,\dots,x) : x^{n'} = 1\} \rtimes P_l(S_n) : \prod_{i=1}^n b_i = \text{sgn}(\tau)\}$$ 
in the case $l \divides q - 1$, $n = l^t$ with $n > 3$ or $s \neq 1$,
$$Z(P)[l] = \langle (\zeta_l, \dots, \zeta_l, \zeta_l^2, \dots, \zeta_l^2, \dots, \zeta_l^{(l-1)}, \dots, \zeta_l^{(l-1)}, 1, \dots, 1) \rangle \cong \mu_l.$$
\end{lemma}
\begin{proof}
Fix $(\mbf{b},\tau) \in P$. Then for $(\mbf{b}', \tau') \in P$,
$$(\mbf{b},\tau)(\mbf{b}',\tau') = (\mbf{b} \tau(\mbf{b}'), \tau\tau') \text{ and } (\mbf{b}', \tau')(\mbf{b},\tau) = (\mbf{b}' \tau'(\mbf{b}), \tau'\tau).$$
Thus $(\mbf{b},\tau)$ is in the center if and only if $\tau \in Z(P_l(S_{n}))$ and
$$\mbf{b}\tau(\mbf{b}') = \mbf{b}'\tau'(\mbf{b}) \mod \{(x,\dots,x) : x^{n'} = 1\}$$ for all $\mbf{b}',\tau'$. Choosing $\tau' = \Id$, we see we must have $\mbf{b}\tau(\mbf{b}') = \mbf{b}'\mbf{b} \mod \{(x,\dots,x) : x^{n'} = 1\}$.  Thus we must have $\tau(\mbf{b}') = \mbf{b}' \mod \{(x,\dots,x) : x^{n'} = 1\}$ for all $\mbf{b}'$ with $(\mbf{b}',\text{Id}) \in P$.

If $\tau \neq \text{Id}$, then without loss of generality assume $\tau(1) = 2$ and $\tau(2) = 3$.

\textbf{Case 1: } If $\mbf{n > 3}$, then choosing 
$$b'_1 = \zeta_l, b'_2 = \zeta_l, b'_3 = 1,b'_4 = \zeta_l^{-2}, \text{ and all other entries } 1,$$ we have $\mbf{b}' \in T$. But 
$$\tau(\mbf{b}')_2 = \zeta_l = b'_2,$$ 
whereas 
$$\tau(\mbf{b'})_3 = \zeta_l \neq 1 = b'_3.$$ 
So $\tau(\mbf{b'}) \neq \mbf{b'} \mod \{(x,\dots,x) : x^{n'} = 1\}$.

\textbf{Case 2: } If $\mbf{n = 3, s > 1}$, then choosing 
$$b'_1 = (\zeta_{l^s})^{l^s-1}, b'_2 = \zeta_{l^s}, \text{ and } b'_3 = 1,$$ 
we have that 
$$\tau(\mbf{b}')_2 = (\zeta_{l^s})^{l^s-1}  = (\zeta_{l^s})^{l^s-2}b'_2$$
whereas 
$$\tau(\mbf{b'})_3 = \zeta_{l^s} = \zeta_{l^s}b'_3 \neq (\zeta_{l^s})^{l^s - 2}b'_3 \text{ since } s > 1.$$ 
So $\tau(\mbf{b'}) \neq \mbf{b'} \mod \{(x,\dots,x) : x^{n'} = 1\}$.


In either case, for any $\tau \neq \text{Id}$, we can choose a $\mbf{b}'$ for which $\tau(\mbf{b}') \neq \mbf{b}' \mod \{(x,\dots,x) : x^{n'} = 1\}$, so we can conclude that we must have $\tau = \Id$. 

We also need $\tau'(\mbf{b}) = \mbf{b} \mod \{(x,\dots,x)\}$ for all $\tau' \in P_l(S_n)$. Note that for each $i, i'$ in the same $J_j$, there exists $\tau' \in P_l(S_n)$ that sends $i$ to $i'$ and fixes some other index. Since there is an index that is fixed by $\tau$, in order for $\tau(\mbf{b})$ to equal $\mbf{b}\mbf{x}$ for $\mbf{x} = (x,x,\dots,x)$, we must have $x = 1$ and so $\tau(\mbf{b}) = \mbf{b}$. So $b_1 = \dots = b_{l^{t-1}}$, $b_{l^{t-1}+1} = \dots = b_{2l^{t-1}}$, $\dots$, $b_{l^t-l^{t-1}+1} = \dots = b_{l^t}.$  If we consider the last generator, $\sigma_1^{t}$, we see that we must have $b_{i+l^{t-1}} = b_{i}x$ for some fixed $x = \zeta_l^{a}$.  Thus $\mbf{b}$ must be of the form
$$\mbf{b} = (b\zeta_l^a, \dots b\zeta_l^a, \dots, b\zeta_l^{(l-1)}, \dots, b\zeta_l^{(l-1)}, b, \dots, b).$$
In $PSL_{l^t}(\F_z)$, the set of all elements of this form is a cyclic group of order $l$ generated by
$$\mbf{b} = (\zeta_l, \dots, \zeta_l, \zeta_l^2, \dots, \zeta_l^2, \dots, \zeta_l^{(l-1)}, \dots, \zeta_l^{(l-1)}, 1, \dots, 1).$$
So we have 
\begin{align*} Z(P)[l] &= \langle (\zeta_l, \dots, \zeta_l, \zeta_l^2, \dots, \zeta_l^2, \dots, \zeta_l^{(l-1)}, \dots, \zeta_l^{(l-1)}, 1, \dots, 1) \rangle \cong \mu_l. \qedhere
\end{align*}

\end{proof}

\subsubsection{\texorpdfstring{Case 1b: $n = l^t$ with $n > 3$ or $s \neq 1$}{Case 1b:  n = lt with n > 3 or s neq 1}}

For the proof of Theorem \ref{PSLn} in the case $l \neq 2$, $n = l^t$ with $n > 3$ or $s \neq 1$, we will need the following lemmas.



\begin{lemma}\label{irrHPSLn2'} Let $\Gamma = \text{Gal}(k(\zeta_{l^s})/k)$, $H = (\Z/l^s\Z)^{n}/\{(x,\dots,x)\}$, $n = l^t$ with $n > 3$ or $s \neq 1$, $v = \min(v_l(n'),s)$, and $\mbf{a} \in H$ with 
$$\sum_{i=1}^{n} a_i = 0 \mod l^v \text{ and } \Psi_{\mbf{a}} \text{ non-trivial on } Z(P)[l].$$ 
Then
$$|\text{orbit}(\mbf{a})| \geq l^{2t-1}$$ under the action of $P_l(S_n)$ on $H/\Gamma$. 
\end{lemma}

\begin{proof}



We will prove the lemma by induction. 

\noindent \textbf{Base Case: $t = 1$ ($n = l$)}


$Z(P)[l]$ is generated by $g = (\zeta_l, \zeta_l^2, \dots, \zeta_l^{(l-1)}, 1)$.  So since $\Psi_{\mbf{a}}$ is non-trivial on $Z(P)[l]$, we must have 
$$1 \neq \Psi_{\mbf{a}}(g) = \prod_{i=1}^l \zeta_{l}^{ia_i} = \zeta_{l}^{\sum_{i=1}^{l} ia_i}.$$
Thus $l \nmid \sum_{i=1}^l ia_i$ and so $\sum_{i=1}^l ia_i$ is invertible. 

$P_l(S_l) \cong \Z/l\Z$, so the stabilizer is either trivial or all of $P_l(S_l)$. $P_l(S_l)$ is generated by $\tau: i \mapsto i+1 \mod l$.  Suppose by way of contradiction that $\tau \neq \Id$ is in the stabilizer of $\mbf{a}$, that is 
$\tau(\mbf{a}) = \gamma \mbf{a} + (x,\dots,x)$ for some $\gamma, x$. 
Then for any $i$, 
$$a_i = \gamma^ia_l + \sum_{k=1}^{i} \gamma^{k-1}x = \gamma^i a_l + x\sum_{k=1}^i \gamma^{k-1}.$$ 
So by Lemma \ref{Helper1}, $\sum_{i=1}^l i a_i$ is not invertible, a contradiction. Therefore any non-trivial $\tau$ is not in the stabilizer of $\mbf{a}$. So the stabilizer is trivial. Thus the orbit has size $|P_l(S_l)| = l$.

\noindent \textbf{Induction Step:} Assume that the lemma is true for $t-1$. 

$Z(P)[l]$ is generated by 
$$g = (\zeta_l, \dots, \zeta_l, \zeta_l^2, \dots, \zeta_l^2, \dots, \zeta_l^{(l-1)}, \dots, \zeta_l^{(l-1)}, 1, \dots, 1).$$  So since $\lambda_i = \Psi_{\mbf{a}}$ is non-trivial on $Z(P)[l]$. we must have 
$$1 \neq \Psi_{\mbf{a}}(g) = \zeta_{l}^{\sum_{j=1}^{l} jA_j}.$$ 
Thus $l \nmid \sum_{j=1}^l j A_j$ and so $\sum_{j=1}^l jA_j$ is invertible. Hence $A_{j_1}$ is invertible for some $j_1$. 

Consider the copy of $P_l(S_{l^{t-1}}) \subset P_l(S_n)$ that acts on $J_{j_1}$. Suppose that $\sigma \in P_l(S_{l^{t-1}})$ is in the stabilizer of $\mbf{a}$ under the action on $H/\Gamma$.  Then $\sigma$ is in the stabilizer of $\mbf{a}|_{J_1}$ under the action of $P_l(S_{l^{t-1}})$ on $((\Z/l^s\Z)^{l^{t-1}}/\{(x,\dots,x)\})/\Gamma$, which by the induction hypothesis has size at most $\frac{|P_l(S_{l^{t-1}})|}{l^{2(t-1)-1}}$. Thus the orbit of $\mbf{a}$ under the action of $P_l(S_{l^{t-1}})$ acting on $J_{j_1}$ has size at least $l^{2(t-1)-1} = l^{2t-3}$.

 
If $l \divides A_j - A_{j_1}$ for all $j$, then we would have 
\begin{align*}
\sum_{j=1}^l jA_j &= \sum_{j=1}^l j(A_{j_1} + A_j-A_{j_1})\\
&= \sum_{j=1}^l jA_{j_1} \mod l\\
&= A_{j_1}\frac{l(l+1)}{2}\\
&= 0 \mod l \text{ since } l \neq 2.
\end{align*}
So $\sum_{j=1}^l jA_j$ would not be invertible, a contradiction. Therefore, there exists $A_{j_2}$ such that $l \nmid A_{j_2} - A_{j_1}$. 

Let $\tau$ be in the copy of $P_l(S_{l^{t-1}}) \subset P_l(S_{l^t})$ that acts on $J_{j_2}$. Suppose by way of contradiction that $\tau(\mbf{a}) = \gamma\sigma(\mbf{a}) + (x,\dots,x)$ for some $\gamma \in \Gamma,$ $x \in \Z/l^s\Z$, and $\sigma$ in the copy of $P_l(S_{l^{t-1}})$ acting on $J_{j_1}$. Then we must have $A_{j_2} = \gamma A_{j_2} + l^{t-1}x$ and $A_{j_1} = \gamma A_{j_1} + l^{t-1}x$. Hence $A_{j_2}-A_{j_1} = \gamma(A_{j_2}-A_{j_1})$. Then since $l \nmid A_{j_2}-A_{j_1}$, we can conclude that $\gamma = 1$. Hence $\tau(\mbf{a}) = \sigma(\mbf{a}) + (x,\dots,x).$ 

Since $l\neq 2$, both $\sigma$ and $\tau$ act trivially on $a_{i_0}$ for some $i_0 \notin J_{j_1} \cup J_{j_2}$. So 
\begin{align*}
&\tau(a_{i_0}) = \sigma(a_{i_0}) + x\\
&\Rightarrow a_{i_0} = a_{i_0} + x\\
&\Rightarrow x = 0.
\end{align*} 
Hence $\tau(\mbf{a}) = \sigma(\mbf{a}).$ This is only possible for $\tau = \text{Id} = \sigma$.  Thus for $\tau \neq \text{Id}$ in the copy of $P_l(S_{l^{t-1}})$ acting on $J_{j_2}$, we can conclude that $\tau(\mbf{a})$ is not equal to $\gamma\sigma(\mbf{a}) + (x,\dots,x)$ for any of the $\sigma \in P_l(S_{l^{t-1}})$ acting on $J_{j_1}$.  Thus the size of the orbit under the action of $P_l(S_{l^{t-1}}) \times P_l(S_{l^{t-1}}) \subset P_l(S_{l^t})$ on $H/\Gamma$ is at least $l^{2t-3}+1$, and so it must be at least $l^{2t-2}$ since it must divides $|P_l(S_{l^t})|$ which is a power of $l$.

Without loss of generality, we may assume that $j_1 = 1$ and $j_2 = l$, that is $A_1$ is invertible and $l \nmid A_l - A_1$. Let $\tau$ be the permutation $i \mapsto i + l^{t-1} \mod l^t$, that is 
$$\tau(\mbf{a}) = (a_{l^{t-1}+1}, \dots, a_{l^{t}}, a_1, \dots, a_{l^{t-1}}).$$
Suppose by way of contradiction that $\tau(\mbf{a}) = \gamma\sigma(\mbf{a}) + (x,\dots,x)$ for some $\gamma \in \Gamma,$ $x \in \Z/l^s\Z$, and $\sigma$ in $P_l(S_{l^{t-1}}) \times P_l(S_{l^{t-1}}) \subset P_l(S_n)$ acting on $J_{j_1} \times J_{j_2}$. Then since $\sigma$ stabilizes the $J_j$ and $\tau(J_j) = J_{j+1 \mod l}$, we must have 
$$A_{j+1 \mod l} = \gamma A_j + l^{t-1} x.$$
Thus
\begin{align*}
\sum_{j=1}^l jA_j &= \sum_{j=1}^l j(\gamma A_j + l^{t-1} x)\\
&= \gamma \sum_{j=1}^l j  A_j \mod l^{t-1}\\
\end{align*}
So since $\sum_{j=1}^l jA_j$ is invertible, we must have $\gamma = 1 \mod l^{t-1}$. So
$$A_1 = \gamma A_l + l^{t-1}x = A_l \mod l$$
and hence $l \divides A_l - A_1,$ a contradiction. So we can conclude that $\tau(\mbf{a})$ is not equal to $\gamma\sigma(\mbf{a}) + (x,\dots,x)$ for any of the $\sigma \in P_l(S_{l^{t-1}}) \times P_l(S_{l^{t-1}})$.  Thus the size of the orbit under the action of $P_l(S_{l^t})$ is at least $l^{2t-2} + 1$, and so it must be at least $l^{2t-1}$ since it must divide $|P_l(S_{l^{t}})|$ which is a power of $l$.
\end{proof}

\begin{corollary}\label{irrHPSLn2cor'} Let $H = (\Z/l^s\Z)^{n}/\{(x,\dots,x)\}$, $n = l^t$ with $n > 3$ or $s \neq 1$, $v = \min(v_l(n'),s)$, and $\mbf{a} \in H$ with 
$$\sum_{i=1}^{n} a_i = 0 \mod l^v \text{ and } \Psi_{\mbf{a}} \text{ non-trivial on } Z(P)[l].$$ 
Then
$$|\text{orbit}(\Psi_{\mbf{a}}|_T)| \geq l^{2t-1}$$
under the action of $P_l(S_n)$ on $\widehat{T'}$. 
\end{corollary}
\begin{proof}
By Lemma \ref{changeperspPSLn}, the orbit of $\Psi_{\mbf{a}}|_T$ under the action of $P_l(S_n)$ has the same size as the orbit of $\mbf{a}$ under the action of $P_l(S_n)$ on $H/\Gamma$. And by Lemma \ref{irrHPSLn2'}, the orbit  of $\mbf{a}$ under the action of $P_l(S_n)$ on $H/\Gamma$ has size at least $l^{2t-1}$. Therefore the orbit of $\Psi_{\mbf{a}}|_T$ has size at least $l^{2t-1}.$ \qedhere
\end{proof}

We can now complete the proof in the case $n = l^t$ with $n > 3$ or $s \neq 1$.
  
\begin{proof}[Proof of Theorem \ref{PSLn} for the case $l \divides q-1$, $l \divides n'$, $n = l^t$ with $n > 3$ or $s \neq 1$.]

Let $\rho$ be a faithful representation of $P$ of minimum dimension (and so it is also irreducible since the center has rank $1$.)   Let $T' = \{\mbf{b} \in (\mu_{l^s})^n : \prod_{i=1}^n b_i = 1\}/\{(x,\dots,x) : x^{n'} = 1\} \subset P$.  Then $T' \triangleleft P$ and so by Clifford's Theorem (Theorem \ref{cliff}), $\rho|_{T'}$ decomposes into a direct sum of irreducibles in the following manner:
$$\rho|_{T'} \cong  \left( \oplus_{i=1}^c  \lambda_i \right)^{\oplus d}, \text{ for some } c, d,$$ 
with the $\lambda_i$ non-isomorphic, and $P_l(S_n)$ acts transitively on the $\lambda_i$, so the $\lambda_i$ have the same dimension and the number of $\lambda_i$, $c$, divides $|P_l(S_n)|$ (which is a power of $l$), so $c$ is a power of $l$. Also, since $\rho$ is faithful, it is non-trivial on $Z(P)[l]$, thus one of the $\lambda_i$ must be non-trivial on $Z(P)[l]$. Without loss of generality, assume that $\lambda_1$ is non-trivial on $Z(P)[l]$.
 
 Note that the irreducible representations of $T'$ are in bijection with irreducible representations of $T = \{\mbf{b} \in (\mu_{l^s})^n : \prod_{i=1}^n b_i = 1\}$ which are trivial on $\{(x,\dots,x) : x^{n'} = 1\}$.   By Lemma \ref{corrlemmaPSLn}, the irreducible representations of $T$ are given by $\Psi_{\mbf{a}}|_T$, with $\mbf{a} \in H/\Gamma$, for $\Gamma = \text{Gal}(k(\zeta_{l^s})/k)$, and if $\Psi_{\mbf{a}}$ is non-trivial on $T[l]$, then $\Psi_\mbf{a}$ has dimension $[k(\zeta_{l^s}):k]$.  Since $\lambda_1$ is non-trivial on $Z(P)[l]$, it must be non-trivial on $T[l]$, so we must have $\dim(\lambda_1) = [k(\zeta_{l^s}):k]$, and so $\dim(\lambda_i) = [k(\zeta_{l^s}):k]$ for all $i$. 
 
Note that for $\mbf{x} = (x,\dots,x)$, $\psi_{\mbf{a}}(\mbf{x}) = x^{\sum_{i=1}^n a_i}$.  So $\psi_{\mbf{a}}|_T \in \text{Irr}(T)$ will be trivial on $\{(x,\dots,x) : x^{n'} = 1\}$ if and only if $\sum_{i=1}^n a_i = 0 \mod l^v$, where $v = \min(v_l(n'),s)$. So $\lambda_1 \cong \Psi_{\mbf{a}}|_T$ for some $\mbf{a} \in H/\Gamma$ with $\sum_{i=1}^n a_i = \mod l^v$.

Since $\lambda_1$ is non-trivial on $Z(P)[l]$,  by Corollary \ref{irrHPSLn2cor'} the orbit the orbit of $\lambda_1$ under the action of $P_l(S_n)$ has size at least $l^{2t-1}$. So $c \geq l^{2t-1}$. Thus
$$\dim(\rho) \geq l^{2t-1}[k(\zeta_{l^s}):k] = \ed_k(PGL_{l^t}(\F_q),l).$$
Also, since $PSL_n(\F_q) \subset PGL_n(\F_q)$,
$$\ed_k(PSL_n(\F_q),l) \leq \ed_k(PGL_n(\F_q),l).$$
Therefore for $n = l^t$ with $n > 3$ or $s \neq 1$,
\begin{align*}
\ed_k(PSL_n(\F_q),l) &= \ed_k(PGL_n(\F_q),l) = l^{2t-1}[k(\zeta_{l^s}):k].
\qedhere
\end{align*}

\end{proof}

\subsection{\texorpdfstring{Case 2: $l \divides q-1$, $l \divides n'$, $n \neq l^t$}{Case 2: l divides q-1, l divides n' divides n, n neq lt}}

\begin{definition} For $n \neq l^t$, $1 \leq j \leq \xi_l(n)$, let $A_j = \sum_{i \in I_j} a_i$. \end{definition}

For the proof of Theorem \ref{PSLn} in the case $l \divides q-1$, $l \divides n'$, $n \neq l^t$, we will need the following lemmas.

\begin{lemma}\label{ZPSLn1} For 
$$P  = \{(\mbf{b}, \tau) \in (\mu_{l^s})^n/ \{(x,\dots,x) : x^{n'} = 1\} \rtimes P_l(S_n) : \prod_{i=1}^n b_i = \text{sgn}(\tau)\}$$ 
in the case $l \divides q - 1$, $l \divides n$, $n \neq l^t$,
$$Z(P)[l] \cong (\mu_l)^{\xi_l(n)-1}.$$
\end{lemma}

\begin{proof}[Proof of Lemma \ref{ZPSLn1}]
Fix $(\mbf{b},\tau) \in P$. Then for $(\mbf{b}', \tau') \in P$,
$$(\mbf{b},\tau)(\mbf{b}',\tau') = (\mbf{b} \tau(\mbf{b}'), \tau\tau') \text{ and } (\mbf{b}', \tau')(\mbf{b},\tau) = (\mbf{b}' \tau'(\mbf{b}), \tau'\tau).$$
Thus $(\mbf{b},\tau)$ is in the center if and only if $\tau \in Z(P_l(S_{n}))$ and
$$\mbf{b}\tau(\mbf{b}') = \mbf{b}'\tau'(\mbf{b}) \mod \{(x,\dots,x) : x^{n'} = 1\}$$ for all $\mbf{b}',\tau'$. Choosing $\tau' = \Id$, we see we must have $\mbf{b}\tau(\mbf{b}') = \mbf{b}'\mbf{b} \mod \{(x,\dots,x) : x^{n'} = 1\}$.  Thus we must have $\tau(\mbf{b}') = \mbf{b}' \mod \{(x,\dots,x) : x^{n'} = 1\}$ for all $\mbf{b}'$ with $(\mbf{b}',\text{Id}) \in P$.

If $\tau \neq \text{Id}$, then without loss of generality assume $\tau(1) = 2$ and $\tau(2) = 3$. Since $n \neq l^t$, we must have $n > 3$, so choosing 
$$b'_1 = \zeta_l, b'_2 = \zeta_l, b'_3 = 1,b'_4 = \zeta_l^{-2}, \text{ and all other entries } 1,$$ we have $(\mbf{b}',\text{Id}) \in P$. But 
$$\tau(\mbf{b}')_2 = \zeta_l = b'_2,$$ 
whereas 
$$\tau(\mbf{b'})_3 = \zeta_l \neq 1 = b'_3.$$ 
So $\tau(\mbf{b'}) \neq \mbf{b'} \mod \{(x,\dots,x) : x^{n'} = 1\}$. Thus for any $\tau \neq \text{Id}$, we can choose a $(\mbf{b}', \text{Id}) \in P$ for which $\tau(\mbf{b}') \neq \mbf{b}' \mod \{(x,\dots,x) : x^{n'} = 1\}$, so we can conclude that we must have $\tau = \Id$. 

We also need $\tau'(\mbf{b}) = \mbf{b} \mod \{(x,\dots,x) : x^{n'} = 1\}$ for any $(\mbf{b}, \tau') \in P$. And for any $\tau' \in P_l(S_n)$, we can find $\mbf{b}'$ such that $(\mbf{b}',\tau') \in P$  So we need  $\tau'(\mbf{b}) = \mbf{b} \mod \{(x,\dots,x) : x^{n'} = 1\}$ for any $\tau' \in P_l(S_n)$.  Since $n \neq l^t$, for each $i, i'$ in the same $I_j$, there exists $\tau' \in P_l(S_n)$ that sends $i$ to $i'$ and fixes some other index. Since there is an index that is fixed by $\tau'$, in order for $\tau'(\mbf{b})$ to equal $\mbf{b}\mbf{x}$ for $\mbf{x} = (x,\dots,x)$, we must have $x = 1$ and so $\tau'(\mbf{b}) = \mbf{b}$. So $b_i = b_{i'}$ for $i,i'$ in the same $I_j$. Let $\mbf{b}^j$ be given by 
$$(\mbf{b}^j)_i = \begin{cases} \zeta_l, &i \in I_j\\
1, &i \notin I_j \end{cases}.$$ 
Note that since $l \divides n$, $\prod_{i=1}^n (\mbf{b}^j)_i = 1 = \text{sgn}(\Id)$; so $(\mbf{b}^j, \text{Id}) \in P$. Then
\begin{align*} Z(P)[l] &= \langle \mbf{b}^j \rangle_{j=1}^{\xi_l(n)}/\{(x,\dots,x) : x^{n'} = 1\}\\
&\cong \langle \mbf{b}^j \rangle_{j=1}^{\xi_l(n)-1} \text{ since } l \divides n'\\
&\cong (\mu_{l})^{\xi_l(n)-1}. \qedhere
\end{align*}

\end{proof}

\begin{lemma}\label{irrHPSLn1'} Let $\Gamma = \text{Gal}(k(\zeta_{2^s})/k)$, $H = (\Z/l^s\Z)^{n}/\{(x,\dots,x)\}$, $n \neq l^t$, $v = \min(v_l(n'),s)$, $1 \leq j_1 \leq \xi_l(n)$, and $\mbf{a} \in H$ with $$\sum_{i=1}^{n} a_i = 0 \mod l^v, \text{ } A_{j_1}\text{ invertible and } A_{j_2} \text{ invertible}.$$ 
Then 
$$|\text{orbit}(\mbf{a})| \geq l^{k_{j_1}+k_{j_2}}$$
under the action of $P_l(S_n)$ on $H/\Gamma$. 
\end{lemma}

\begin{proof}

Without loss of generality, assume that $k_{j_1} \geq k_{j_2}$. We will prove the lemma by induction on $k_{j_1}$ and $k_{j_2}$.

\noindent \textbf{Base Case: } $k_{j_1} = k_{j_2} = 1$.

If $k_{j_1} = k_{j_2} = 1$, then $n = el$ for $e < l$ and $a_{j_1}$ is invertible. Without loss of generality, assume that $j_1 = 1$ and $j_2 = 2$, that is $A_1$ and $A_2$ are invertible. 

Since $A_1$ is invertible, $a_i$ is invertible for some $i \in [1,l]$. Without loss of generality, assume that $a_l$ is invertible. And since $A_2$ is invertible, $a_i$ is invertible for some $i \in [l+1,2l]$. Without loss of generality, assume that $a_{2l}$ is invertible.

Consider the action of $P_l(S_l) \cong \Z/l\Z$ on $I_{1}$. The stabilizer under the action of $P_l(S_l)$ will either be trivial or all of $P_l(S_l)$.  Suppose by way of contradiction that $\tau \neq \text{Id}$ is in the stabilizer of $\mbf{a}$, that is $\tau(\mbf{a}) = \gamma \mbf{a} = (x,\dots,x)$ for some $\gamma,x$. Then for any $i$,
$$a_i = \gamma^i a_l + x\sum_{k=1}^i \gamma^{k-1}.$$
So by Lemma \ref{Helper1}, $A_1 = \sum_{i=1}^l i a_i$ is not invertible, a contradiction. Therefore any non-trivial $\tau$ is not in the stabilizer of $\mbf{a}$. So the stabilizer is trivial. Thus the orbit under the action of $P_l(S_l)$ acting on $I_1$ has size $|P_l(S_l)| = l$.

Now consider the action of $P_l(S_l)$ on $I_{j_2}$.  Suppose by way of contradiction that for $\tau \neq \text{Id}$, $\tau(\mbf{a}) = \gamma \sigma(\mbf{a}) + (x,\dots,x)$ for some $\gamma \in \Gamma,$ $x \in \Z/l^s\Z$, and $\sigma$ in the copy of $P_l(S_l)$ acting on $J_1$. Then since $\sigma$ stabilizes $J_2$, we must have $\tau(\mbf{a}|_{J_2}) = \gamma \mbf{a}|_{J_2} + (x,\dots,x)$. Then the same reasoning as above, we can reach a contradiction. Therefore, for any non-trivial $\tau$ in $P_l(S_l)$ acting on $I_{j_2}$, $\tau(\mbf{a})$ cannot be equal to $\gamma\sigma(\mbf{a}) + (x,\dots,x)$ for $\sigma$ in $P_l(S_l)$ acting on $I_1$. Thus the size of the orbit under the action of $P_l(S_l) \times P_l(S_l) \subset P_l(S_n)$ acting on $I_1 \times I_2$ is at least $l + 1$, and so it must be $l^2$ since it must divides $|P_l(S_n)|$ which is a power of $l$.

\noindent \textbf{Induction Step 1 (induction on $k_{j_1}$): } Assume that $k_{j_1} > 1$ and the lemma is true for $k_{j_1}-1$, $k_{j_2} = 1$. 

 Without loss of generality, assume that $k_{j_1} = 1$.

Let $K_j$ denote the $j$th sub-block of $l^{k_{1}-1}$ entries in $I_{1}$. And let $B_j = \sum_{i \in K_j} a_i$.  Then since $A_{1} = \sum_{i=1}^l B_j$ is invertible, we must have $B_j$ invertible for some $j$.  Without loss of generality, assume that $B_1$ is invertible. Consider the copy of $P_l(S_{l^{k_1-1}}) \times P_l(S_l)$ that acts on $K_1 \times I_{j_2}$. Then by the induction assumption, the orbit of $\mbf{a}$ under the action of $P_l(S_{l^{k_1-1}}) \times P_l(S_l)$ has size at least $l^{k_1}$.

If $l \divides B_j - B_1$ for all $j$, then we would have 
\begin{align*}
A_{1} &= \sum_{j=1}^l B_j\\
&= \sum_{j=1}^l (B_1 + B_j - B_1)\\
&= \sum_{j=1}^l B_1 \mod l\\
&= B_1 \frac{l(l+1)}{2}\\
&= 0 \mod l \text{ since } l \neq 2
\end{align*}
This is a contradiction with the fact that $A_{1}$ is invertible. Therefore, there exists $B_{j'}$ such that $l \nmid B_{j'} - B_1$. Without loss of generality, assume that $j' = l$, that is $l \nmid B_l - B_1$.

Let $\tau$ be the permutation in $P_l(S_{l^{k_1}})$ acting on $I_{1}$ given by $i \mapsto i + l^{k_1-1} \mod l^{k_1}$, that is 
$$\tau(\mbf{a}) = (a_{l^{k_1-1}+1},\dots a_{l^{k_1}}, a_1, a_{l^{k_1-1}}, a_{l^{k_1}+1}, \dots a_{n}).$$
Suppose by way of contradiction that $\tau(\mbf{a}) = \gamma \sigma(\mbf{a}) + (x,\dots,x)$ for some $\gamma \in \Gamma$, $x \in \Z/l^s\Z$, and $\sigma \in P_l(S_{l^{k_1-1}}) \times P_l(S_l)$ acting on $K_1 \times I_{j_2}$. Then since $\sigma$ stabilizes the $K_j$ and $\tau(K_j) = K_{j+1 \mod l}$, we must have 
$$B_{j+1\mod l} = \gamma B_j + l^{k_1-1}x.$$
Thus
\begin{align*}
A_1 &= \sum_{j=1}^l B_j\\
&= \sum_{j=1}^l (\gamma B_j + l^{k_1-1}x)\\
&= \gamma \sum_{j=1}^l B_j \mod l^{k_1-1}\\
&= \gamma A_1 \mod l^{k_1-1}
\end{align*}
So since $A_1$ is invertible, we must have $\gamma = 1 \mod l^{k_1-1}$. So
$$B_1 = \gamma B_l + l^{k_1-1}x = B_l \mod l \text{ since } k_1 > 1$$
and hence $l \divides B_l - B_1,$ a contradiction. So we can conclude that $\tau(\mbf{a})$ is not equal to $\gamma \sigma(\mbf{a}) + (x,\dots,x)$ for any of the $\sigma$ in $P_l(S_{l^{k_1-1}}) \times P_l(S_l)$. Thus the size of the orbit under the action of $P_l(S_{l^{k_1}}) \times P_l(S_l)$ acting on $I_{j_1} \times I_{j_2}$ is at least $l^{k_1} + 1$, and so it must be at least $l^{k_1+1}$ since it must divides $|P_l(S_{l^{k_1}}) \times P_l(S_l)|$ which is a power of $l$.

So we have now shown that the lemma is true for any $k_{j_1}$ with $k_{j_2} = 1$.

\noindent \textbf{Induction Step 2 (induction on $k_{j_2}$): } Assume that $k_{j_2} > 1$ and the lemma is true for $k_{j_1}$, $k_{j_2}-1$.

Let $K_j$ denote the $j$th sub-block of $l^{k_{1}-1}$ entries in $I_{j_2}$. And let $B_j = \sum_{i \in K_j} a_i$.  Then since $A_{j_2} = \sum_{i=1}^l B_j$ is invertible, we must have $B_j$ invertible for some $j$.  Without loss of generality, assume that $B_1$ is invertible. Consider the copy of $P_l(S_{l^{k_{j_1}}}) \times P_l(S_{l^{k_{j_2}-1}})$ that acts on $I_{j_1} \times K_1$. Then by the induction assumption, the orbit of $\mbf{a}$ under the action of $P_l(S_{l^{k_{j_1}}}) \times P_l(S_{l^{k_{j_2}-1}})$ has size at least $l^{k_{j_1}+k_{j_2}-1}$.

Just as in induction step 1, there must exist $B_{j'}$ such that $l \nmid B_{j'} - B_1$. And without loss of generality, we may assume that $j' = l$, that is $l \nmid B_l - B_1$.

Let $\tau$ be the permutation in $P_l(S_{l^{k_{j_2}}})$ acting on $I_{j_2}$ given by $i \mapsto i + l^{k_{j_2}-1} \mod l^{k_{j_2}}$.
Suppose by way of contradiction that $\tau(\mbf{a}) = \gamma \sigma(\mbf{a}) + (x,\dots,x)$ for some $\gamma \in \Gamma$, $x \in \Z/l^s\Z$, and $\sigma \in P_l(S_{l^{k_{j_1}}}) \times P_l(S_{l^{k_{j_2}-1}})$ acting on $I_{j_1} \times K_1$. Then since $\sigma$ stabilizes the $K_j$ and $\tau(K_j) = K_{j+1 \mod l}$, we must have 
$$B_{j+1\mod l} = \gamma B_j + l^{k_1-1}x.$$
Thus
\begin{align*}
A_{j_2} &= \sum_{j=1}^l B_j\\
&= \sum_{j=1}^l (\gamma B_j + l^{k_{j_2}-1}x)\\
&= \gamma \sum_{j=1}^l B_j \mod l^{k_{j_2}-1}\\
&= \gamma A_{j_1} \mod l^{k_{j_2}-1}
\end{align*}
So since $A_{j_2}$ is invertible, we must have $\gamma = 1 \mod l^{k_{j_2}-1}$. So
$$B_1 = \gamma B_l + l^{k_{j_2}-1}x = B_l \mod l \text{ since } k_{j_2} > 1$$
and hence $l \divides B_l - B_1,$ a contradiction. So we can conclude that $\tau(\mbf{a})$ is not equal to $\gamma \sigma(\mbf{a}) + (x,\dots,x)$ for any of the $\sigma$ in $\sigma \in P_l(S_{l^{k_{j_1}}}) \times P_l(S_{l^{k_{j_2}-1}})$. Thus the size of the orbit under the action of $P_l(S_{l^{k_{j_1}}}) \times P_l(S_{l^{k_{j_2}-1}}) \subset P_l(S_n)$ is at least $l^{k_{j_1}+k_{j_2}-1} + 1$, and so it must be at least $l^{k_{j_1}+k_{j_2}}$ since it must divides $|P_l(S_n)|$ which is a power of $l$. \qedhere   
\end{proof}

\begin{corollary}\label{irrHPSLn1cor'} Let $H = (\Z/l^s\Z)^{n}/\{(x,\dots,x)\}$, $n \neq l^t$, $v = \min(v_l(n'),s)$, $j_1 \in \{1, \dots, \xi_l(n)\}$, and $\mbf{a} \in H$ with $$\sum_{i=1}^{n} a_i = 0 \mod l^v, \text{ } A_{j_1}\text{ invertible}.$$ 
Then $$|\text{orbit}(\Psi_{\mbf{a}}|_T)| \geq l^{k_{j_1}+v_l(n)}$$
under the action of $P_l(S_n)$ on $\widehat{T'}$. 
\end{corollary}
\begin{proof}
By Lemma \ref{changeperspPSLn}, the orbit of $\Psi_{\mbf{a}}|_T$ under the action of $P_l(S_n)$ has the same size as the orbit of $\mbf{a}$ under the action of $P_l(S_n)$ on $H/\Gamma$. 

Since $l^v \divides \sum_{i=1}^{n} a_i$, we know that $\sum_{i=1}^n a_i = \sum_{j=1}^l A_j$ is not invertible. So since $A_{j_1}$ is invertible, we must have $A_{j_2}$ invertible as well for some $j_2 \neq j_1$. 

Then by \ref{irrHPSLn1'}, the orbit  of $\mbf{a}$ under the action of $P_l(S_n)$ on $H/\Gamma$ has size at least $l^{k_{j_1}+k_{j_2}}$. And for all $j$, $k_{j} \geq v_l(n)$. Thus the orbit of $\mbf{a}$ under the action of $P_l(S_n)$ on $H/\Gamma$ has size at  least $l^{k_{j_1}+v_l(n)}$.

Therefore the orbit of $\Psi_{\mbf{a}}|_T$ has size at least $l^{k_{j_1}+v_l(n)}.$ \qedhere
\end{proof}

We can now complete the proof in the case $n \neq l^t$.

\begin{proof}[Proof of Theorem \ref{PSLn} for the case $l \divides q-1$, $l \divides n'$, $n \neq l^t$.]

Recall that
$$P = \{(\mbf{b}, \tau) \in (\mu_{l^s})^n/\\
 \{(x,\dots,x) : x^{n'} = 1\} \rtimes P_l(S_n) : \prod_{i=1}^n b_i = \text{sgn}(\tau)\}.$$
  Let $\rho$ be a faithful representation of $P$ of minimum dimension.  Let $\rho = \oplus_{j=1}^{\xi_l(n)-1} \varphi_j$ be the decomposition into irreducibles. Let $C = Z(P)$. For $j \leq \xi_l(n)-1$, let 
 $$T_j = \{\mbf{b} \in (\mu_{l^s})^n : \prod_{i=1}^n b_i = 1, \text{ }b_i = 1 \text{ for } i \notin I_j\}/\{(x,\dots,x) : x^{n'} = 1\}.$$
  By the same reasoning as for $PGL_n(\F_q)$, we can rearrange the $\rho_j$ such that $\chi_j|_{C[l]}$ is non-trivial on $T_j \cap C[l]$ and thus $\varphi_j$ is non-trivial on $T_j \cap C[l]$.

Fix $j \leq \xi_l(n)-1$ and let $\varphi = \varphi_j$. Let $T' = \{\mbf{b} \in (\mu_{l^s})^n : \prod_{i=1}^n b_i = 1\}/ \{(x,\dots,x) : x^{n'} = 1\}$. Then $T' \triangleleft P$. So by Clifford's Theorem (Theorem \ref{cliff}), $\varphi|_{T'}$ decomposes into a direct sum of irreducibles in the following manner:
$$\varphi|_{T'} \cong  \left( \oplus_{i=1}^c  \lambda_i \right)^{\oplus d}, \text{ for some } c, d,$$ 
with the $\lambda_i$ non-isomorphic, and $P_l(S_{n})$ acts transitively on the isomorphism classes of the $\lambda_i$, so the $\lambda_i$ have the same dimension and the number of $\lambda_i$, $c$, divides $|P_l(S_{n})|$. Since $\varphi$ is non-trivial on $T_j \cap C[l]$, one of the $\lambda_i$ must be non-trivial on $T_j \cap C[l]$. Without loss of generality, assume that $\lambda_1$ is non-trivial on $T_j \cap C[l]$.

 Note that the irreducible representations of $T'$ are in bijection with irreducible representations of $T = \{\mbf{b} \in (\mu_{l^s})^n : \prod_{i=1}^n b_i = 1\}$ which are trivial on $\{(x,\dots,x) : x^{n'} = 1\}$.   By Lemma \ref{corrlemmaPSLn}, the irreducible representations of $T$ are given by $\Psi_{\mbf{a}}|_T$, with $\mbf{a} \in H/\Gamma$, for $\Gamma = \text{Gal}(k(\zeta_{l^s})/k)$, and if $\Psi_{\mbf{a}}$ is non-trivial on $T[l]$, then $\Psi_\mbf{a}$ has dimension $[k(\zeta_{l^s}):k]$.  Since $\lambda_1$ is non-trivial on $T_j \cap C[l] \subset T[l]$, it must be non-trivial on $T[l]$, so we must have $\dim(\lambda_1) = [k(\zeta_{l^s}):k]$, and so $\dim(\lambda_i) = [k(\zeta_{l^s}):k]$ for all $i$. 
 
Note that for $\mbf{x} = (x,\dots,x)$, $\psi_{\mbf{a}}(\mbf{x}) = x^{\sum_{i=1}^n a_i}$.  So $\Psi_{\mbf{a}}|_T \in \text{Irr}(T)$ will be trivial on $\{(x,\dots,x) : x^{n'} = 1\}$ if and only if $\sum_{i=1}^n a_i = 0 \mod l^v$, where $v = \min(v_l(n'),s)$. So $\lambda_1 \cong \Psi_{\mbf{a}}|_T$ for some $\mbf{a} \in H/\Gamma$ with $\sum_{i=1}^n a_i = \mod l^v$.

Also, since $\lambda_1 \cong \Psi_\mbf{a}|_T$ is non-trivial on $T_j \cap C[l] = \langle \mbf{b}^j \rangle$, where $(\mbf{b}^j)_i = \begin{cases} \zeta_l, &i \in I_j\\
1, &i \notin I_j \end{cases},$
we must have 
$$1 \neq \prod_{i\in I_{j}} \zeta_{l}^{a_i} = \zeta_{l}^{\sum_{i\in I_{j}} a_i} = \zeta_l^{A_{j}}.$$
Thus $l \nmid A_{j}$ and so $A_{j}$ is invertible.  So by Corollary \ref{irrHPSLn1cor'}, the orbit of $\lambda_i = \Psi_{\mbf{a}}|_{T}$ under the action of $P_l(S_n)$ has size at least $l^{k_j + v_l(n)}$. So $c \geq l^{k_j + v_l(n)}$. Thus for $\varphi = \varphi_j$,
$$\dim(\varphi) \geq l^{k_j + v_l(n)}[k(\zeta_{l^s}):k].$$
Hence
\begin{align*}
\dim(\rho) &= \sum_{j=1}^{\xi_l(n)-1} \dim(\varphi_j)\\
&\geq \sum_{j=1}^{\xi_l(n)-1} l^{k_j + v_l(n)}[k(\zeta_{l^s}):k]\\
&= l^{v_l(n)}\left(\sum_{j=1}^{\xi_l(n)-1} l^{k_j}\right)[k(\zeta_{l^s}):k]\\
&= l^{v_l(n)}(n-l^{v_l(n)})[k(\zeta_{l^s}):k]\\
&= \ed_k(PGL_n(\F_q),l)
\end{align*}
Also, since $PSL_n(\F_q) \subset PGL_n(\F_q)$,
$$\ed_k(PSL_n(\F_q),l) \leq \ed_k(PGL_n(\F_q),l).$$
Therefore for $n \neq l^t$,
\begin{align*}
\ed_k(PSL_n(\F_q),l) &= \ed_k(PGL_n(\F_q),l) = l^{v_l(n)}(n-l^{v_l(n)})[k(\zeta_{l^s}):k]. \qedhere
\end{align*}
\end{proof}

\section{The Symplectic Groups}

\begin{theorem}\label{PSp} Let $p$ be a prime, $q = p^r$, and $l$ a prime with $l \neq 2,p$.  Let $k$ be a field with $\text{char } k \neq l$. Let $d$ be the smallest positive integer such that $l \divides q^d - 1$. Then
\begin{align*} 
\ed_k(PSp(2n,q),l) = \ed_k(Sp(2n,q),l) &= \begin{cases} \ed_k(GL_{2n}(\F_q),l), &d \text{ even} \\ 
\ed_k(GL_n(\F_q),l), &d \text{ odd} \end{cases}
\end{align*}
\end{theorem}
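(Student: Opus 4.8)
The plan is to follow the same three-step strategy (find the Sylow $l$-subgroup and its center, classify irreducibles via Wigner-Mackey, then read off upper and lower bounds) that was used for $\GL_n(\F_q)$, and in fact to reduce the symplectic computation directly to the general linear one. The key observation is that a Sylow $l$-subgroup of $Sp(2n,q)$, for odd $l\neq p$, is determined by how the torus elements of $l$-power order sit inside the symplectic form, and these split according to the parity of $d$. First I would recall that for $l$ odd, the $l$-part of $|Sp(2n,q)|$ is $\prod_i(q^{2i}-1)$ up to $l$-prime factors; writing each $q^{2i}-1=(q^i-1)(q^i+1)$ and using that $d$ is the multiplicative order of $q$ mod $l$, one finds $\nu_l(q^{2i}-1)>0$ iff $d\mid i$ (when $d$ is odd) or $d\mid 2i$ with the right parity (when $d$ is even). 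This yields $|Sp(2n,q)|_l = l^{s m_0}\cdot|S_{m_0}|_l$ where $m_0=n$ if $d$ is odd and $m_0=2n$... wait, one must be careful: I expect $m_0 = \lfloor 2n/d\rfloor$ in general, which equals $\lfloor n/(d/2)\rfloor$ when $d$ is even (matching $GL_{2n}$ with its own $(2n)_0 = \lfloor 2n/d\rfloor$) and equals... Actually the cleanest route is: show $|Sp(2n,q)|_l = |GL_{2n}(\F_q)|_l$ when $d$ is even and $|Sp(2n,q)|_l = |GL_n(\F_q)|_l$ when $d$ is odd, then exhibit an explicit embedding realizing the Sylow subgroup, and then observe the Sylow $l$-subgroup and its center are literally isomorphic to those computed in Lemma~\ref{3.2} and Lemma~\ref{GLZ} for the corresponding $GL$.

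The concrete construction: when $d$ is odd, $l\mid q^d-1$ but $l\nmid q^d+1$ (else $l\mid 2$), so the anisotropic torus of norm-$1$ type does not contribute; a maximal $l$-power-order torus is built from $n_0=\lfloor n/d\rfloor$ blocks each a copy of the cyclic group $\langle E\rangle\cong\Z/l^s\Z$ embedded in a $2d$-dimensional hyperbolic space as $\mathrm{diag}(E,(E^{-1})^{\mathsf T})$ (the symplectic form pairs the two $d$-dimensional halves), and $P_l(S_{n_0})$ permutes these blocks; this sits inside $Sp(2n,q)$ and has order $l^{sn_0}|S_{n_0}|_l = |GL_n(\F_q)|_l$ after checking $\lfloor n/d\rfloor = \lfloor 2n/(2d)\rfloor$ lines up with the $GL_n$ count (here the relevant $n_0$ for $GL_n$ is $\lfloor n/d\rfloor$, same $d$). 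When $d$ is even, $l\mid q^d-1$ and the natural torus elements of order $l^s$ already live in a $d$-dimensional space on which the form is nondegenerate of the right type, giving $\lfloor 2n/d\rfloor$ blocks — exactly the $(2n)_0$ appearing in Theorem~\ref{GLn} for $GL_{2n}(\F_q)$. In both cases one gets $P\cong(\Z/l^s\Z)^{m_0}\rtimes P_l(S_{m_0})$ with $m_0\in\{n_0,(2n)_0\}$, so Lemma~\ref{GLZ} gives the center verbatim, Proposition~\ref{LbGL} gives the minimal dimensions of the irreducible constituents, and Lemma~\ref{BMKS3.5} assembles the faithful representation of minimal dimension exactly as in the proof of Theorem~\ref{GLn}. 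For $PSp(2n,q)$ one notes $Z(Sp(2n,q))=\{\pm I\}$ has order $\le 2$, hence for odd $l$ the Sylow $l$-subgroups of $Sp(2n,q)$ and $PSp(2n,q)$ coincide, giving $\ed_k(PSp(2n,q),l)=\ed_k(Sp(2n,q),l)$ immediately from Lemma~\ref{Lempsyl}.

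I would organize this as: (a) a subsection computing $|Sp(2n,q)|_l$ and proving $P\cong(\Z/l^s\Z)^{m_0}\rtimes P_l(S_{m_0})$ via the explicit matrices above (the analogue of Lemma~\ref{3.2}); (b) a remark that $Z(P)$ and the irreducible classification are identical to the $GL$ case, citing Lemma~\ref{GLZ} and Proposition~\ref{LbGL}; (c) the parity bookkeeping identifying $m_0$ with $(2n)_0$ or $n_0$; (d) the $PSp$ reduction. The main obstacle I anticipate is step (a): verifying that the proposed matrices genuinely preserve a symplectic form (choosing the form correctly so that $\mathrm{diag}(E,(E^{-1})^{\mathsf T})$ or the $d$-even block embedding is symplectic), that the resulting subgroup has full $l$-power order (requiring the precise formula for $\nu_l(|Sp(2n,q)|)$, presumably citable from \cite{Sta} or \cite{Gr}), and that no "extra" $l$-torsion hides in the anisotropic parts of the torus — this last point is exactly where the parity of $d$ enters and must be argued carefully, using $l\neq 2$ to rule out $l\mid q^i+1$ contributing independently when $d$ is odd.
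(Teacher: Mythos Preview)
Your proposal is correct and follows essentially the same route as the paper: reduce $PSp$ to $Sp$ via $|Z(Sp(2n,q))|\le 2$, then identify a Sylow $l$-subgroup of $Sp(2n,q)$ with one of $GL_{2n}(\F_q)$ or $GL_n(\F_q)$ according to the parity of $d$, citing Stather for the $l$-part of the order and building the explicit $\mathrm{diag}(E,(E^{-1})^{\mathsf T})$ blocks in the odd case. The one simplification you overlooked is that for $d$ even no explicit construction is needed at all: once $|Sp(2n,q)|_l = |GL_{2n}(\F_q)|_l$ and $Sp(2n,q)\subset GL_{2n}(\F_q)$, any Sylow $l$-subgroup of $Sp(2n,q)$ is already a Sylow $l$-subgroup of $GL_{2n}(\F_q)$, so the isomorphism is immediate.
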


\begin{proof}
\noindent By Grove (\cite{Gr}, Theorem 3.12),

$$|PSp(2n,q)| = \frac{|Sp(2n,q)|}{(2,q-1)}.$$
So since $l \neq 2$, $|l,PSp(2n,q)|_l = |Sp(2n,q)|_l.$ Hence since $PSp(2n,q)$ is a quotient of $Sp(2n,q)$, we can conclude that their Sylow $l$-subgroups are isomorphic.
Let $d$ be the smallest positive integer such that $l \divides q^d - 1$ and let $s = v_l(q^d-1)$.

\noindent If $\mbf{d}$ \textbf{ is even}: Then by Stather (\cite{Sta}), $|Sp(2n,q)|_l = |GL_{2n}(\F_{q})|_l.$ Hence since $Sp(2n,q)$ is a subgroup of $GL_{2n}(\F_{q})$, we can conclude that their Sylow $l$-subgroups are isomorphic.

\noindent If $\mbf{d}$ \textbf{ is odd}:
Then by Stather (\cite{Sta}), letting $n_0 = \lfloor \frac{n}{d} \rfloor$, we have
\begin{align*}
|Sp(2n,q)|_l &= |GL_{n}(\F_q)|_l = l^{sn_0} \cdot |S_{n_0}|_l
\end{align*}
Let $\epsilon$ be primitive $l^s$-th root in $\F_{q^d}$, and let $E$ be the image of $\epsilon$ in $GL_d(\F_q)$.  Let
{\footnotesize $$E_1 = \begin{pmatrix}
E &   &        &   &            &   &        &\\
  & 1 &        &   &            &   &        &\\
  &   & \ddots &   &            &   &        &\\
  &   &        & 1 &            &   &        &\\
  &   &        &   & (E^{-1})^T &   &        & \\
  &   &        &   &            & 1 &        & \\
  &   &        &   &            &   & \ddots & \\
  &   &        &   &            &   &        & 1
\end{pmatrix}, \ldots,$$
$$E_{n_0} = \begin{pmatrix}
1 &        &   &   &                     &   &        &   &            & \\
  & \ddots &   &   &                     &   &        &   &            & \\
  &        & 1 &   &                     &   &        &   &            & \\
  &        &   & E &                     &   &        &   &            & \\
  &        &   &   & \text{Id}_{n-n_0d}  &   &        &   &            & \\
  &        &   &   &                     & 1 &        &   &            & \\
  &        &   &   &                     &   & \ddots &   &            & \\
  &        &   &   &                     &   &        & 1 &            & \\
  &        &   &   &                     &   &        &   & (E^{-1})^T & \\
  &        &   &   &                     &   &        &   &            & \text{Id}_{n-n_0d}
\end{pmatrix}$$}
Then for all $i$, $E_i \in Sp(2n,p^r)$. Note we can embed $P_l(S_{n_0})$ into $Sp(2n,q)$. Let 
\begin{align*} P = \langle E_1, \ldots, E_{n_0} \rangle \rtimes P_l(S_{n_0}) \cong (\Z/l^s\Z)^{n_0} \rtimes P_l(S_{n_0})
\end{align*}
Then $P \in \syl_l(Sp(2n,q))$, and $P$ is isomorphic to a Sylow $l$-subgroup of $GL_n(\F_q)$.
\qedhere
\end{proof}

\section{\texorpdfstring{The Orthogonal Groups, $p \neq 2$}{The Orthogonal Groups, p neq 2}}

Assume $p \neq 2$.

\subsection{Definitions}
We recall briefly the definitions of $O^\epsilon(n,p^r)$ and $P\Omega^\epsilon(n,p^r)$:
\subsubsection{\texorpdfstring{The case $n = 2m$}{The case n = 2m}}

Let $$A^+ = \begin{pmatrix} 0_m & \text{Id}_m\\
\text{Id}_m & 0_m\end{pmatrix}.$$
Let $\eta \in \F_{p^r}^\times$ be a non-square and let $\text{Id}_m^\eta$ be the $m \times m$ identity matrix with the first entry replaced by $\eta$. Let 
$$A^- = \begin{pmatrix} 0_m & \text{Id}_m\\
\text{Id}_{m}^\eta & 0_m\end{pmatrix}.$$

\begin{definition} Define $O^+(2m,p^r)$ as $\{M \in GL(2m,\F_{p^r}) : M^T A^+ M = A^+\}$. \end{definition}

\begin{definition} Define $O^-(2m,p^r)$ as $\{M \in GL(2m, \F_{p^r}) : M^T A^- M = A^-\}$.\end{definition}

\begin{definition} For $\epsilon = +,-$, define $SO^\epsilon(2m,p^r) = \{M \in O^\epsilon(2m,p^r) : \det(M) = 1\}$. \end{definition}

\begin{definition} For $\epsilon = +,-$, define $\Omega^\epsilon(2m,p^r) = (SO^\epsilon)'(2m,p^r)$ (the commutator subgroup). \end{definition}

\begin{definition} For $\epsilon = +, -$, define 
$$P\Omega^\epsilon(2m,p^r) = \Omega^\epsilon(2m,p^r)/(\Omega^\epsilon(2m,p^r) \cap \{\epsilon \text{Id}\}).$$ \end{definition}

\subsubsection{\texorpdfstring{The case $n = 2m+1$}{The case n = 2m+1}}

Let $$L = \begin{pmatrix} -1 & \mbf{0} & \mbf{0}\\
\mbf{0} & 0_m & \text{Id}_m\\
\mbf{0} & \text{Id}_m & 0_m
\end{pmatrix}.$$

\begin{definition} Define $O(2m + 1,p^r) = \{M \in GL(2m + 1,\F_{p^r}) : M^T L M = L\}$. \end{definition}

\begin{definition} Define $SO(2m + 1,p^r) = \{M \in O(2m + 1,p^r) : \det(M) = 1\}$. \end{definition}

\begin{definition} Define $\Omega(2m + 1,p^r) = SO'(2m + 1,p^r)$ (the commutator subgroup). \end{definition}

\subsection{Theorem and Proof}
 
\begin{theorem}\label{On} Let $p$ be a prime, $q = p^r$, and $l$ a prime with $l \neq 2,p$.  Let $k$ be a field with $\text{char } k \neq l$.  Let $d$ be the smallest positive integer such that $l \divides q^d - 1$, and let $n_0 = \lfloor \frac{n}{d} \rfloor$. 
{\small \begin{align*}
&\ed_k(P\Omega^\epsilon(n,q),l)\\
&= \ed_k(O^\epsilon(n,q),l)\\
&= \begin{cases} \ed_k(GL_m(\F_q),l), &n = 2m+1, \text{ d odd}\\
&\text{or } n = 2m, d \text{ odd}, \epsilon = + \\
\ed_k(GL_{m-1}(\F_q),l), &n = 2m, d \text{ odd}, \epsilon = - \\
\ed_k(GL_{2m}(\F_q),l), &n = 2m+1, \text{ d even}\\
&\text{or } n = 2m, \text{ d even}, n_0 \text{ even}, \epsilon = +\\
&\text{or } n = 2m, d \text{ even}, n_0 \text{ odd}, \epsilon = -\\
ed_k(GL_{2m-2}(\F_q),l), &n = 2m, d \text{ even}, n_0 \text{ odd}, \epsilon = +\\
&\text{or } n = 2m, d \text{ even}, n_0 \text{ even}, \epsilon = -\\
\end{cases}
\end{align*}}
\end{theorem}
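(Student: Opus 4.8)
The plan is to reduce the orthogonal case to the general linear case exactly as was done for the symplectic groups in Theorem \ref{PSp}, by identifying a Sylow $l$-subgroup of $O^\epsilon(n,q)$ (equivalently of $P\Omega^\epsilon(n,q)$, once we check the index is prime to $l$) with a Sylow $l$-subgroup of an appropriate $GL_N(\F_q)$. First I would record the relevant order formulas: from Grove and Stather, for $l\neq 2,p$ one has $|O^\epsilon(n,q)|_l = |SO^\epsilon(n,q)|_l = |\Omega^\epsilon(n,q)|_l$, since the relevant indices ($2$, and the spinor norm / $\{\pm\mathrm{Id}\}$ quotients) are powers of $2$; hence all four groups $O^\epsilon$, $SO^\epsilon$, $\Omega^\epsilon$, $P\Omega^\epsilon$ have isomorphic Sylow $l$-subgroups, and it suffices to work with $O^\epsilon(n,q)$. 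Then I would split into the cases $d$ odd and $d$ even.

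For $d$ odd, the key point is that $\F_{q^d}^\times$ contains the $l^s$-torsion, and a torus of the relevant orthogonal group decomposes as a product of copies of $\F_{q^d}^\times$ together with a small anisotropic/split complement. Concretely, writing $n = 2m+1$ or $n=2m$, the number of $\F_{q^d}^\times$-factors one can fit is $\lfloor m/d\rfloor$ in the cases $n=2m+1$ and $n=2m,\epsilon=+$, and $\lfloor (m-1)/d\rfloor$ in the case $n=2m,\epsilon=-$ (the $\epsilon=-$ form forces one extra anisotropic 2-dimensional block over $\F_{q^d}$ which contributes no $l$-torsion). I would exhibit explicit block-diagonal elements $E_i\in O^\epsilon(n,q)$ of order $l^s$ — each $E_i$ being a copy of the image of a primitive $l^s$-th root of unity $\epsilon\in\F_{q^d}$ placed in a $2d$-dimensional hyperbolic block as $\mathrm{diag}(E,(E^{-1})^T)$, exactly as in the symplectic proof — together with the permutation group $P_l(S_{m'})$ (with $m' = \lfloor m/d\rfloor$ resp. $\lfloor (m-1)/d\rfloor$) permuting these blocks, and check by the order formula that $\langle E_1,\dots,E_{m'}\rangle\rtimes P_l(S_{m'}) \cong (\Z/l^s\Z)^{m'}\rtimes P_l(S_{m'})$ is a Sylow $l$-subgroup; this is literally the Sylow $l$-subgroup of $GL_{m'}(\F_q)$ computed in Lemma \ref{3.2}, so Theorem \ref{GLn} gives the answer.

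For $d$ even, the relevant fact (again from Stather) is that $|O^\epsilon(n,q)|_l = |GL_N(\F_q)|_l$ for $N = 2m$ when $n = 2m+1$, or when $n=2m$ with ($\epsilon=+$, $n_0$ even) or ($\epsilon=-$, $n_0$ odd); and $N = 2m-2$ in the two remaining subcases ($n=2m$, $\epsilon=+$, $n_0$ odd) or ($n=2m$, $\epsilon=-$, $n_0$ even). In each instance I would argue that the orthogonal group sits inside $GL_N(\F_q)$ (for the $n=2m$, $N=2m$ cases this is the defining inclusion; for $n=2m+1$ the $(2m+1)$-dimensional form restricted to a suitable $2m$-dimensional nondegenerate subspace gives the embedding; for the $N=2m-2$ cases one peels off a $2$-dimensional anisotropic summand that contributes nothing $l$-locally), so by order-counting a Sylow $l$-subgroup of $O^\epsilon(n,q)$ is a Sylow $l$-subgroup of $GL_N(\F_q)$, and again Theorem \ref{GLn} finishes it. The parity conditions on $n_0$ are exactly what governs whether the full $2m$-dimensional space or only a $(2m-2)$-dimensional subspace supports a maximal $l$-torus of the expected type, because over $\F_{q^d}$ (with $d$ even, so $\sqrt{}$-type extensions come into play) a sum of hyperbolic planes has a definite type $\epsilon$ and definite parity, and reconciling that with the given $\epsilon$ and with $n_0 = \lfloor n/d\rfloor$ determines whether a leftover anisotropic plane is forced.

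The main obstacle I anticipate is the bookkeeping in the $d$-even case: correctly matching, for each of the eight listed subcases, the discriminant/type of the $O^\epsilon$ form with the type of a direct sum of hyperbolic and anisotropic planes over $\F_q$ and $\F_{q^d}$, so as to justify the precise value of $N$ (and in particular the appearance of the parity of $n_0$). Once the embedding $O^\epsilon(n,q)\hookrightarrow GL_N(\F_q)$ with matching $l$-part of the order is established in each subcase, everything else is a direct appeal to the already-proven Lemma \ref{3.2}, Lemma \ref{GLZ}, and Theorem \ref{GLn}; the $P\Omega^\epsilon$ statement needs no extra work beyond the prime-to-$l$ index observation at the start.
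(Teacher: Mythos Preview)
Your proposal is essentially correct and follows the same overall strategy as the paper: reduce to Sylow $l$-subgroups (using that all indices between $O^\epsilon$, $SO^\epsilon$, $\Omega^\epsilon$, $P\Omega^\epsilon$ are powers of $2$), then identify the Sylow $l$-subgroup with that of an appropriate $GL_N(\F_q)$ via an order-count plus an embedding, and finish by invoking Theorem~\ref{GLn}. For $n=2m+1$ with $d$ odd your explicit block-diagonal construction with blocks $\mathrm{diag}(E,(E^{-1})^T)$ is exactly the paper's.

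The paper organizes the even-dimensional case differently, and somewhat more cleanly: rather than analyzing each of the eight subcases directly, it uses the chain of embeddings $O(2m-1,q)\hookrightarrow O^\epsilon(2m,q)\hookrightarrow O(2m+1,q)$ and observes that since $l\neq 2$, exactly one of $q^m+1$, $q^m-1$ is prime to $l$, which determines whether the Sylow $l$-subgroup of $O^\epsilon(2m,q)$ agrees with that of $O(2m+1,q)$ or of $O(2m-1,q)$. This reduces everything to the already-settled odd-dimensional case, and Stather's formulas are then used only to read off which parity condition on $n_0$ corresponds to which side. Your case-by-case approach works too, but the sandwich argument avoids the discriminant/type bookkeeping you flagged as the main obstacle.

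One point to tighten: in the $d$-even subcases with $N=2m-2$ (and similarly for $n=2m+1$, $N=2m$) you write that ``the orthogonal group sits inside $GL_N(\F_q)$,'' but $O^\epsilon(2m,q)$ does not embed in $GL_{2m-2}(\F_q)$. What you actually need (and what your parenthetical about peeling off an anisotropic plane is gesturing at) is a subgroup $H\subset O^\epsilon(2m,q)$ with $|H|_l=|O^\epsilon(2m,q)|_l$ that also embeds in $GL_{2m-2}(\F_q)$ with matching $l$-part; the paper's choice $H=O(2m-1,q)$ does exactly this.
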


 \begin{remark} We do not need to prove the case $n = 2m+1, p = 2$ since $O^\epsilon(2m+1,2^r) \cong Sp(2m,2^r)$ (\cite{Gr}, Theorem 14.2), so this case is taken care of in the work on the symplectic groups. \end{remark}

\begin{proof}

\noindent By Grove, for $p \neq 2$ (\cite{Gr}, Theorem 9.11 and Corollary 9.12),

$$|P\Omega(2m+1,q)| = \frac{|O(2m+1,q)|}{4} \qquad \text{ and } \qquad |P\Omega^\epsilon(2m,q)| = \frac{|O^\epsilon(2m,q)|}{2(4,q^m - \epsilon 1)}.$$
For $p = 2$ (\cite{Gr}, Theorem 14.48 and Corollary 14.49),
$$|P\Omega^\epsilon(2m,q)| = \frac{|O^\epsilon(2m,q)|}{2}.$$
So in all cases, since $l \neq 2$, we have that
$|P\Omega^\epsilon(n,q)|_l = |O^\epsilon(n,q)|_l.$ Hence since $P\Omega^\epsilon(n,q)$ is a quotient of $O^\epsilon(n,q)$, we can conclude that their Sylow $l$-subgroups are isomorphic.

 Let $d$ be the smallest positive integer such that $l \divides q^d - 1$ and let $s = l^{v_l(q^d-1)}$. 

\subsubsection{\texorpdfstring{The case $n = 2m+1$}{The case n=2m+1}}

\noindent If $\mbf{d}$ \textbf{ is even}: Then $|O(2m+1,q)|_l = |GL_{2m+1}(\F_q)|_l = |GL_{2m}(\F_q)|_l.$ Hence since $O(2m+1,q)$ embeds in $GL_{2m+1}(\F_q)$ and $GL_{2m}(\F_q)$ embeds in $GL_{2m+1}(\F_q)$, we can conclude that the Sylow $l$-subgroups of $O(2m+1,q)$, $GL_{2m+1}(\F_q)$, and $GL_{2m}(\F_q)$ are isomorphic.

\noindent If $\mbf{d}$ \textbf{ is odd}:
Then by Stather (\cite{Sta}), letting $m_0 = \lfloor \frac{m}{d} \rfloor$, we have
\begin{align*}
|O(2m+1,q)|_l &= |GL_m(\F_q)|_l = l^{sm_0} \cdot P_l(S_{m_0})
\end{align*}
Let $\epsilon$ be primitive $l^s$-th root in $\F_{q^d}$, and let $E$ be the image of $\epsilon$ in $GL_d(\F_q)$.  Let
{\scriptsize $$E_1 = \begin{pmatrix}
1 &   &   &        &   &            &   &        &\\ 
  & E &   &        &   &            &   &        &\\
  &   & 1 &        &   &            &   &        &\\
  &   &   & \ddots &   &            &   &        &\\
  &   &   &        & 1 &            &   &        &\\
  &   &   &        &   & (E^{-1})^T &   &        & \\
  &   &   &        &   &            & 1 &        & \\
  &   &   &        &   &            &   & \ddots & \\
  &   &   &        &   &            &   &        & 1
\end{pmatrix}, \ldots,$$
$$E_{m_0} = \begin{pmatrix}
1 &        &   &   &                     &   &        &   &            & \\
  & \ddots &   &   &                     &   &        &   &            & \\
  &        & 1 &   &                     &   &        &   &            & \\
  &        &   & E &                     &   &        &   &            & \\
  &        &   &   & \text{Id}_{m-m_0d}  &   &        &   &            & \\
  &        &   &   &                     & 1 &        &   &            & \\
  &        &   &   &                     &   & \ddots &   &            & \\
  &        &   &   &                     &   &        & 1 &            & \\
  &        &   &   &                     &   &        &   & (E^{-1})^T & \\
  &        &   &   &                     &   &        &   &            & \text{Id}_{m-m_0d}
\end{pmatrix}$$}
Then for all $i$, $E_i \in O(2m+1,p^r)$. Note we can embed $P_l(S_{m_0})$ into $O(2m+1,q)$. Let 
\begin{align*} P = \langle E_1, \ldots, E_{n_0} \rangle \rtimes P_l(S_{m_0}) \cong (\Z/l^s\Z)^{n_0} \rtimes P_l(S_{m_0}) 
\end{align*}
Then $P \in \syl_l(O(2m+1,q))$, and $P$ is isomorphic to a Sylow $l$-subgroup of $GL_m(\F_q)$.

\subsubsection{\texorpdfstring{The case $n = 2m$}{The case n=2m}}

Note that $O^\epsilon(n,q)$ embeds into $O^\epsilon(n+1,q)$ via 
$$X \mapsto \begin{pmatrix} 1 & 0 \\
0 & X \end{pmatrix}.$$
By Grove (\cite{Gr}, Theorem 9.11 and Corollary 9.12),
$$|O^+(2m,q)| = 2q^{m(m-1)}(q^m - 1)\prod_{i=1}^{m-1} (q^{2i}-1).$$
$$|O^-(2m,q)| = 2q^{m(m-1)}(q^m + 1)\prod_{i=1}^{m-1} (q^{2i}-1).$$
and
$$|O(2m+1,1)| = 2q^{m^2}\prod_{i=1}^m (q^{2i}-1).$$
Thus
\begin{align*}
[O(2m+1,q):O^+(2m,q)] &= q^m(q^m + 1)\\
[O^+(2m,q):O(2m-1,q)] &= q^{m-1}(q^m-1)\\
[O(2m+1,q):O^-(2m,q)] &= q^m(q^m - 1)\\
[O^-(2m,q):O(2m-1,q)] &= q^{m-1}(q^m+1)
\end{align*}
Note that since $l \neq 2$, either $q^m+1$ or $q^m - 1$ is prime to $l$.

\noindent If $q^m+1$ is prime to $l$, then 
\begin{align*}
|O^+(2m,q)|_l = |O(2m+1,q)|_l\\
|O^-(2m,q)|_l = |O(2m-1,q)|_l
\end{align*}
Thus when $q^m+1$ is prime to $l$, the Sylow $l$-subgroups of $O^+(2m,q)$ are isomorphic to those of $O(2m+1,q)$, and the Sylow $l$-subgroups of $O^-(2m,q)$ are isomorphic to those of $O(2m-1,q)$.

\noindent If $q^m-1$ is prime to $l$, then
\begin{align*}
|O^+(2m,q)|_l = |O(2m-1,q)|_l\\
|O^-(2m,q)|_l = |O(2m+1,q)|_l
\end{align*}

Thus when $q^m-1$ is prime to $l$, the Sylow $l$-subgroups of $O^+(2m,q)$ are isomorphic to those of $O(2m-1,q)$, and the Sylow $l$-subgroups of $O^-(2m,q)$ are isomorphic to those of $O(2m+1,q)$.

We showed in the subsection on odd orthogonal groups that when $d$ is even, the Sylow $l$-subgroups of $O(2m+1,q)$ are isomorphic to those of $GL_{2m}(\F_q)$, and when $d$ is odd, the Sylow $l$-subgroups of $O(2m+1,q)$ are isomorphic to those of $GL_m(\F_q)$. 

Recall that we defined $n_0 = \lfloor \frac{2m}{d} \rfloor$. By Stather \cite{Sta},
$$|O^+(2m,q)|_l = \begin{cases} |GL_m(\F_q)|_l, &d \text{ odd}\\
|GL_{2m-2}(\F_q)|_l, &d \text{ even}, n_0 \text{ odd}\\
 |GL_{2m}(\F_q)|_l, &d \text{ even}, n_0 \text{ even} \end{cases}$$
and
$$|O^-(2m,q)|_l = \begin{cases} |GL_{m-1}(\F_q)|_l, &d \text{ odd}\\
|GL_{2m}(\F_q)|_l, &d \text{ even}, n_0 \text{ odd}\\
|GL_{2m-2}(\F_q)|_l, &d \text{ even}, n_0 \text{ even} \end{cases}.$$

In order for this to match up with the isomorphisms to the odd orthogonal groups, we must have that when $d$ is odd or $d$ is even with $n_0$ even, then $q^m + 1$ is prime to $l$.  When d is even with $n_0$ odd, then $q^m -1$ is prime to $l$. 

\noindent \textbf{Case 1: }$\mbf{d}$ \textbf{odd}

For $d$ odd, the Sylow $l$-subgroups of $O^+(2m,q)$ are isomorphic to those of $O(2m+1,q)$, which are isomorphic to those of $GL_m(\F_q)$ and the Sylow $l$-subgroups of $O^-(2m,q)$ are isomorphic to those of $O(2m-1,q)$, which are isomorphic to those of $GL_{m-1}(\F_q)$.

\noindent \textbf{Case 2: }$\mbf{d}$ \textbf{even}, $\mbf{n_0}$ \text{odd}

For $d$ even, $n_0$ odd, the Sylow $l$-subgroups of $O^+(2m,q)$ are isomorphic to those of $O(2m-1,q)$, which are isomorphic to those of $GL_{2m-2}(\F_q)$ and the Sylow $l$-subgroups of $O^+(2m,q)$ are isomorphic to those of $O(2m+1,q)$, which are isomorphic to those of $GL_{2m}(\F_q)$.

\noindent \textbf{Case 3: }$\mbf{d}$ \textbf{even}, $\mbf{n_0}$ \text{even}

For $d$ even, $n_0$ even, the Sylow $l$-subgroups of $O^+(2m,q)$ are isomorphic to those of $O(2m+1,q)$, which are isomorphic to those of $GL_{2m}(\F_q)$ and the Sylow $l$-subgroups of $O^-(2m,q)$ are isomorphic to those of $O(2m-1,q)$, which are isomorphic to those of $GL_{2m-2}(\F_q)$.

Putting the above results together, we get Theorem \ref{On}. \qedhere
\end{proof}

\section{The Unitary Groups}

\subsection{Definitions}

 For any prime $p$,  by the Artin-Schreier Theorem (\cite{La}, Theorem 6.4) we can write $\F_{p^{2r}} = \F_{p^r}[\alpha]$ where $\alpha$ is a root of $x^2 - x - \eta$. Then $\F_{p^{2r}} = \F_{p^r} + \alpha\F_{p^r}$. 
 
\begin{definition} For $p \neq 2$, we can change $x^2 - x - \eta$ by a linear change of variables to the form $x^2 + \eta'$. Let $\alpha$ be a root of $x^2 + \eta'$. Then we still have $\F_{p^{2r}} = \F_{p^r} + \alpha\F_{p^r}$. We will write elements in $\F_{p^{2r}}$ as $a = \dot{a} + \ddot{a}\alpha$, where $\dot{a}, \ddot{a} \in \F_{p^r}$.  For $a = \dot{a} + \ddot{a}\alpha$, define $\overline{a} = \dot{a} - \ddot{a}\alpha$. \end{definition}

\begin{remark} For any prime $p$, we have that for all $a \in \F_{p^{2r}}$, $\overline{\overline{a}} = a$. \end{remark}

\begin{definition} For $\mbf{x} = (x_i) \in (\F_{p^{2r}})^n$, let $\overline{\mbf{x}} = (\overline{x_i})$. \end{definition}

\begin{definition} For $M = (M_{ij}) \in GL(n,\F_{p^{2r}})$, let $\overline{M} = (\overline{M_{ij}})$. \end{definition}

\begin{definition} A matrix $\beta$ is called \textbf{Hermitian} if $\beta^T = \overline{\beta}$. \end{definition}

\begin{definition} Given a Hermitian matrix $\beta$, the unitary matrices are defined by $$U_\beta(n,p^{2r}) := \{M \in GL(n,\F_{p^{2r}}) : M^T\beta\overline{M} = \beta\}.$$ \end{definition}

\noindent By Grove (\cite{Gr}, Corollary 10.4), for any $p, r, n, \beta, \beta'$, $U_\beta(n,p^{2r}) \cong U_{\beta'}(n,p^{2r})$, so we may work with whichever choice of $\beta$ we like. So for the remainder of this paper, we will drop the $\beta$ in our notation.

\begin{definition} The special unitary matrices are defined by $$SU(n,p^{2r}) := \{M \in U(n,p^{2r}) : \det(M) = 1\}.$$ Define $$PSU(n,p^{2r}) := SU(n,p^{2r})/(Z(SU(n,p^{2r})).$$ \end{definition}

\begin{remark} Grove shows (\cite{Gr}, Proposition 11.17) that $$Z(SU(n,p^{2r})) = \{a\text{Id}_n : a \in \F_{p^{2r}}, a\overline{a} = 1, \text{ and } a^n = 1\}.$$
\end{remark}

\subsection{The Unitary Groups}

\begin{theorem}\label{Un}
Let $p$ be a prime, $q = p^r$, and $l$ a prime with $l \neq 2,p$.  Let $k$ be a field with $\text{char } k \neq l$. Let $d$ be the smallest positive integer such that $l \divides q^d - 1$. Then
$$\ed_k(U(n,q^2),l) = \begin{cases}\ed_k(GL_n(\F_{q^2}),l), &d = 2 \pmod{4}\\
\ed_k(GL_{\lfloor \frac{n}{2} \rfloor}(\F_{q^2}),l), &d \neq 2 \pmod{4}\end{cases}
$$
\end{theorem}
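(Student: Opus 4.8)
The plan is to follow the same template as the $GL$, symplectic, and orthogonal cases already treated: reduce to Sylow $l$-subgroups via Lemma \ref{Lempsyl}, identify a Sylow $l$-subgroup of $U(n,q^2)$ with a Sylow $l$-subgroup of a suitable general linear group over $\F_{q^2}$, and then quote Theorem \ref{GLn}. Concretely, I will show that $P \in \syl_l(U(n,q^2))$ satisfies $P \cong (\Z/l^s\Z)^{n_0} \rtimes P_l(S_{n_0})$ for an appropriate $n_0$, which by Lemma \ref{3.2} is exactly a Sylow $l$-subgroup of $GL_n(\F_{q^2})$ when $d \equiv 2 \pmod 4$ and of $GL_{\lfloor n/2\rfloor}(\F_{q^2})$ when $d \not\equiv 2 \pmod 4$. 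Once this is done, the representation-theoretic content (Steps 2 and 3 of the general outline) is inherited verbatim from the proof of Theorem \ref{GLn}.

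The first step is number-theoretic: determine the smallest size $d^*$ of a ``unitary block'' $GU(d^*,q) \leq U(n,q^2)$ whose order is divisible by $l$. Since $|GU(j,q)|$ is, up to a power of $p$, $\prod_{i=1}^j (q^i - (-1)^i)$, and since $l \neq 2$ (so at most one of $q^i-1,\ q^i+1$ is divisible by $l$), one checks that $l \mid q^i-1$ for $i$ even iff $d \mid i$, while $l \mid q^i+1$ for $i$ odd iff $d \equiv 2 \pmod 4$ and $(d/2)\mid i$. Running through the cases yields $d^* = d/2$ if $d\equiv 2\pmod 4$, $d^* = d$ if $d\equiv 0\pmod 4$, and $d^* = 2d$ if $d$ is odd, and in each case $\nu_l\bigl(q^{d^*}-(-1)^{d^*}\bigr) = \nu_l(q^{2d^*}-1) = s$ because the complementary factor is prime to $l$. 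By Stather \cite{Sta} (or directly from the order formula for $U(n,q^2)$ together with the generic order theory of finite groups of Lie type), this gives, with $n_0 = \lfloor n/d^*\rfloor$,
$$|U(n,q^2)|_l = l^{s n_0}\cdot |S_{n_0}|_l,$$
exactly as in the general linear case, the relative Weyl group contributing $S_{n_0}$; hence a Sylow $l$-subgroup has the form $(\Z/l^s\Z)^{n_0}\rtimes P_l(S_{n_0})$.

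To pin down the isomorphism type rather than just the order, I will produce this subgroup explicitly inside $U(n,q^2)$. Fixing (by Grove's classification) a Hermitian form that is an orthogonal sum of forms on $n_0$ disjoint $d^*$-blocks together with one on the remaining $(n-n_0 d^*)$-dimensional piece, choose a cyclic subgroup of order $l^s$ inside a maximal torus of $GU(d^*,q)$ of order $q^{d^*}-(-1)^{d^*}$, realized by a unitary $d^*\times d^*$ matrix $E$ over $\F_{q^2}$. Placing $n_0$ commuting block-diagonal copies $E_1,\dots,E_{n_0}$ in the disjoint blocks (and the identity on the leftover coordinates) and adjoining the monomial, hence unitary, permutation matrices that realize $P_l(S_{n_0})$ permuting the blocks, we obtain $P = \langle E_1,\dots,E_{n_0}\rangle \rtimes P_l(S_{n_0}) \cong (\Z/l^s\Z)^{n_0}\rtimes P_l(S_{n_0})$; comparing with the order formula above shows $P \in \syl_l(U(n,q^2))$. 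Since $\lfloor \lfloor n/2\rfloor/(d/2)\rfloor = \lfloor n/d\rfloor$ and $\lfloor \lfloor n/2\rfloor/d\rfloor = \lfloor n/(2d)\rfloor$, the value of $n_0$ coincides with the $n_0$ attached to $GL_n(\F_{q^2})$ when $d\equiv 2\pmod 4$ and to $GL_{\lfloor n/2\rfloor}(\F_{q^2})$ otherwise, so by Lemma \ref{3.2} $P$ is a Sylow $l$-subgroup of that general linear group. Finally $\nu_l\bigl((q^2)^{d''}-1\bigr) = s$ where $d''$ is the order of $q^2$ modulo $l$, so the hypothesis that $k$ contains a primitive $l^s$-th root of unity is precisely the hypothesis needed to apply Theorem \ref{GLn} over $\F_{q^2}$; combining Lemma \ref{Lempsyl} with Theorem \ref{GLn} then gives the stated formula.

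The main obstacle is the bookkeeping in the first two steps: getting the trichotomy for $d^*$ according to $d \bmod 4$ correct and verifying that the $l$-adic valuation stays equal to $s$ in all three branches, and then exhibiting honest unitary matrices of order $l^s$ in the smallest admissible block size and confirming, via the order count, that the constructed subgroup is genuinely Sylow. There is no new representation theory: once $P$ is identified with $(\Z/l^s\Z)^{n_0}\rtimes P_l(S_{n_0})$, the computation of $Z(P)$, the Wigner--Mackey classification of its irreducibles, and the resulting faithful-representation bounds are exactly those carried out for $GL_n(\F_q)$.
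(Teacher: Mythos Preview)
Your proposal is correct and follows essentially the same approach as the paper: reduce to Sylow $l$-subgroups and identify them with those of the appropriate $GL_m(\F_{q^2})$ via Stather's order formula plus an explicit block construction, then invoke Theorem \ref{GLn}. The one notable simplification in the paper that you miss is in the case $d \equiv 2 \pmod 4$: there the paper does not build any matrices at all, but just observes that $U(n,q^2) \subset GL_n(\F_{q^2})$ together with $|U(n,q^2)|_l = |GL_n(\F_{q^2})|_l$ forces the Sylow $l$-subgroups to coincide, so your uniform explicit construction and the accompanying $d^*$-trichotomy are unnecessary in that branch.
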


\begin{proof}

By Stather \cite{Sta}
$$|U(n,q^2)|_l = \begin{cases} |GL_n(\F_{q^2})|_l, &d = 2 \pmod{4}\\
|GL_{\lfloor \frac{n}{2} \rfloor}(\F_{q^2})|_l, &d \neq 2 \pmod{4}\end{cases}$$

\noindent \textbf{Case 1: } $\mbf{d = 2 \pmod{4}}$.

Since $U(n,q^2) \subset GL_n(\F_{q^2})$ and $|U(n,q^2)|_l = |GL_n(\F_{q^2})|_l$ in this case, we can immediately conclude that for $d = 2 \pmod{4}$, the Sylow $l$-subgroups of 
$U(n,q^2)$ and $GL_n(\F_{q^2})$ are isomorphic.

\noindent \textbf{Case 2: } $\mbf{d \neq 2 \pmod 4}$

Let $s = v_l(q^{d} - 1)$. let $\epsilon$ be a primitive $l^s$-root of unity in $\F_{q^{2d}}$. Let $E$ be the image of $\epsilon$ in $GL_d(\F_{q})$.

For $n = 2m$, let 
{\scriptsize $$E_1 = \begin{pmatrix}
  E &   &        &   &            &   &        &\\
    & 1 &        &   &            &   &        &\\
    &   & \ddots &   &            &   &        &\\
    &   &        & 1 &            &   &        &\\
    &   &        &   & (\overline{E^{-1}})^T &   &        & \\
    &   &        &   &            & 1 &        & \\
    &   &        &   &            &   & \ddots & \\
    &   &        &   &            &   &        & 1
\end{pmatrix}, \ldots,$$
$$E_{\lfloor \frac{m}{d} \rfloor} = \begin{pmatrix}
1 &        &   &   &                     &   &        &   &            & \\
  & \ddots &   &   &                     &   &        &   &            & \\
  &        & 1 &   &                     &   &        &   &            & \\
  &        &   & E &                     &   &        &   &            & \\
  &        &   &   & \text{Id}_{m-\lfloor \frac{m}{d} \rfloor d}  &   &        &   &            & \\
  &        &   &   &                     & 1 &        &   &            & \\
  &        &   &   &                     &   & \ddots &   &            & \\
  &        &   &   &                     &   &        & 1 &            & \\
  &        &   &   &                     &   &        &   & (\overline{E^{-1}})^T & \\
  &        &   &   &                     &   &        &   &            & \text{Id}_{m-\lfloor \frac{m}{d} \rfloor d}
\end{pmatrix}$$}

For $n = 2m+1$, let 
$$E_1' = \begin{pmatrix} 1 & \mbf{0}\\
\mbf{0} & E_1 \end{pmatrix}, \ldots, E_{\lfloor \frac{m}{d} \rfloor} = \begin{pmatrix} 1 & \mbf{0}\\
\mbf{0} & E_{\lfloor \frac{m}{d} \rfloor} \end{pmatrix}$$

$P_l(S_{\lfloor \frac{m}{d} \rfloor})$ acts on $\langle E_1, \ldots, E_{\lfloor \frac{m}{d} \rfloor} \rangle \cong \langle E_1', \ldots, E_{\lfloor \frac{m}{d} \rfloor}' \rangle$. 
We can embed $P_l(S_{\lfloor \frac{m}{d} \rfloor})$ into $U(n,q^2)$. Let 
\begin{align*}
P &= \langle E_1, \ldots, E_{\lfloor \frac{m}{d} \rfloor} \rangle \rtimes P_l(S_{\lfloor \frac{m}{d} \rfloor})
\end{align*}
Then $P \in \syl_l(U(n,q^2))$, and $P$ is isomorphic to a Sylow $l$-subgroup of $GL_m(\F_{q^2})$, which is isomorphic to a Sylow $l$-subgroup of $GL_{\lfloor \frac{n}{2} \rfloor}(\F_{q^2})$.
\qedhere
\end{proof}

\subsection{The Special Unitary Groups}

\begin{theorem}\label{SUn} Let $p$ be a prime, $q = p^r$, and $l$ a prime with $l \neq 2,p$.  Let $k$ be a field with $\text{char } k \neq l$. Then
$$\ed_k(SU(n,q^2),l) = \begin{cases} \ed_k(U(n,q^2),l), &l \nmid q+1\\
\ed_k(SL_n(\F_{q^2}),l), &l \divides q + 1\end{cases}$$\end{theorem}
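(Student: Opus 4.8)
The plan is to follow the now-familiar template: reduce to Sylow $l$-subgroups, identify the relevant semidirect-product structure, and compare it with the structure for $U(n,q^2)$ (resp.\ $SL_n(\F_{q^2})$) that has already been analyzed. By Lemma~\ref{Lempsyl} and Corollary~\ref{rootofunity} it suffices to compute the minimal dimension of a faithful representation of $P\in\syl_l(SU(n,q^2))$ over a field containing the appropriate roots of unity. First I would dispose of the case $l\nmid q+1$: here $|SU(n,q^2)|_l=|U(n,q^2)|_l$ since $[U(n,q^2):SU(n,q^2)]=\gcd(n,q+1)$ divides $q+1$ by Grove (\cite{Gr}, Proposition~11.17 and the surrounding index computations), so the Sylow $l$-subgroup of $SU(n,q^2)$, being contained in that of $U(n,q^2)$ and of the same order, is isomorphic to it, giving $\ed_k(SU(n,q^2),l)=\ed_k(U(n,q^2),l)$ directly from Theorem~\ref{Un}.

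The substantive case is $l\divides q+1$. Since $l\mid q+1$ we have $l\mid q^2-1$, so $d=1$ (or $d=2$ depending on whether $l\mid q-1$; the point is $l\mid q+1$ forces $d\in\{1,2\}$ and in fact $d\nmid$ something making $d\neq 2\pmod 4$ automatically), so we are in the regime where the unitary Sylow is built from the "twisted torus" $\langle E_i\rangle$ sitting inside $U(n,q^2)$ exactly as the diagonal torus sits inside $GL$ or $SL$. The key structural claim I would prove is: for $l\mid q+1$, writing $s=\nu_l(q+1)$, a Sylow $l$-subgroup of $SU(n,q^2)$ is isomorphic to $(\Z/l^s\Z)^{n-1}\rtimes P_l(S_n)$ — the same group (up to the value of $s$) that was shown to be a Sylow $l$-subgroup of $SL_n(\F_{q^2})$ in Section on the special linear groups. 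Concretely: take $\epsilon$ a primitive $l^s$-th root of unity in $\F_{q^2}$ with $\epsilon\overline{\epsilon}=\epsilon^{q+1}=1$ (this exists precisely because $l^s\mid q+1$), and build the same diagonal matrices $E_1,\dots,E_{n-1},E_n$ with entries $\epsilon,1/\epsilon$ as in the $SL_n$ construction; each lies in $SU(n,q^2)$ because $\epsilon\overline\epsilon=1$ makes the Hermitian form invariant and $\det=1$. Then $E_n=\prod_{i=1}^{n-1}E_i^{l^s-1}$ exactly as before, $P_l(S_n)$ embeds by permuting coordinates, and an order count against $|SU(n,q^2)|_l$ (using Stather's formula and the index $\gcd(n,q+1)$) shows the resulting group is a full Sylow $l$-subgroup. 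The center computation, the Wigner–Mackey classification of irreducibles, and the upper/lower bound argument are then \emph{verbatim} those of Lemmas~\ref{SLZ}, Proposition~\ref{LbSL}, and the proof of Theorem~\ref{SLn}, yielding $\ed_k(SU(n,q^2),l)=\ed_k(SL_n(\F_{q^2}),l)$.

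I expect the main obstacle to be purely bookkeeping: verifying that the divisibility relations among $d$, $s=\nu_l(q+1)$, $\nu_l(q-1)$, and $\nu_l(q^2-1)$ line up so that the "$d\neq 2\pmod 4$" branch of Theorem~\ref{Un} is the one in force whenever $l\mid q+1$, and that Stather's order formula for $|SU(n,q^2)|_l$ genuinely equals $|SL_n(\F_{q^2})|_l$ in this regime (with $s$ replaced by $\nu_l(q+1)$ in the relevant exponent) — essentially the unitary analogue of the identity $|GL_n|_l=l^{sn_0}|S_{n_0}|_l$. Once that arithmetic is pinned down, no new representation-theoretic input is needed, since everything reduces to the already-completed analysis of $(\Z/l^{s}\Z)^{n-1}\rtimes P_l(S_n)$; one only has to note that the conclusion $\ed_k(SL_n(\F_{q^2}),l)$ is stated intrinsically in terms of $n$ and the floor sums and does not see the value of $s$, so substituting $\nu_l(q+1)$ for $\nu_l(q-1)$ changes nothing in the final formula.
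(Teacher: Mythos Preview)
Your approach is essentially the paper's, and the substantive claim---that for $l\mid q+1$ a Sylow $l$-subgroup of $SU(n,q^2)$ is $(\Z/l^s\Z)^{n-1}\rtimes P_l(S_n)$ with $s=\nu_l(q+1)$, hence isomorphic to one of $SL_n(\F_{q^2})$---is correct. Two bookkeeping slips to fix:
\begin{itemize}
\item The index $[U(n,q^2):SU(n,q^2)]$ is $q+1$ (the image of $\det$ is the norm-$1$ subgroup of $\F_{q^2}^\times$), not $\gcd(n,q+1)$; the latter is $|Z(SU(n,q^2))|$. Your conclusion for $l\nmid q+1$ is unaffected.
\item When $l\mid q+1$ and $l\neq 2$ you have $l\nmid q-1$, so $d=2\equiv 2\pmod 4$. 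This is the branch of Theorem~\ref{Un} in which $\syl_l(U(n,q^2))\cong\syl_l(GL_n(\F_{q^2}))$, \emph{not} the $d\not\equiv 2\pmod 4$ branch you name.
\end{itemize}

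One minor difference in route: the paper does not rebuild the Sylow inside $SU$ by explicit matrices. It uses the $d\equiv 2\pmod 4$ case of Theorem~\ref{Un} to get $\syl_l(U(n,q^2))\cong\syl_l(GL_n(\F_{q^2}))$, checks $|SU(n,q^2)|_l=|SL_n(\F_{q^2})|_l=l^{s(n-1)}|S_n|_l$, and then passes to the $\det=1$ kernels on both sides to conclude the Sylows of $SU$ and $SL_n(\F_{q^2})$ are isomorphic. Your explicit construction is a perfectly good alternative; but either way, once the Sylow isomorphism is in hand the result follows immediately from Lemma~\ref{Lempsyl}, and the center computation, Wigner--Mackey analysis, and bounds you propose to redo are unnecessary---they are already packaged in $\ed_k(SL_n(\F_{q^2}),l)$.
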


\begin{proof}
By Grove (\cite{Gr}, Theorem 11.28 and Corollary 11.29),
$$|SU(n,q^2)| = \frac{|U(n,q^2)|}{q+1}$$

If $l \nmid q + 1$, then the Sylow $l$-subgroups of $SU(n,q^2)$ are isomorphic to the Sylow $l$-subgroups of $U(n,q^2)$. So we need only prove the case when $l \divides q + 1$. Thus in this section, we will assume $l \divides q + 1$. Then since $l \neq 2$, this implies that $l \nmid q - 1$. Also, since $q^2 - 1 = (q+1)(q-1)$, we must have $l \divides q^2 - 1$. Let $d'$ be the smallest positive integer such that $l \divides q^d - 1$. Then $d' = 2$.  So by the work on the unitary groups, $|U(n,q^2)|_l = |GL_n(\F_{q^2})|_l$. Note that since $l \nmid q - 1$, $v_l(q^2-1) = v_l(q+1)$. 

Note that when finding the Sylow $l$-subgroup of $GL_n(\F_{q^2})$, we would have $d$ the smallest power of $q^2$ such that $l \divides (q^2)^{d} - 1$. So in this case, we would have $d = 1$. Then we would set $s = v_l((q^2)^{d} - 1) = v_l(q^2 -1) = v_l(q+1)$.  We would have $n_0 = \lfloor \frac{n}{d} \rfloor = \lfloor \frac{n}{1} \rfloor = n$.  Thus in the present case,
$$|GL_n(\F_{q^2})|_l = l^{sn} \cdot |S_n|_l.$$
So \begin{align*}
|SU(n,q^2)|_l &= \frac{|U(n,q^2)|_l}{l^{v_l(q+1)}} = \frac{|GL_n(\F_{q^2})|_l}{l^s} = \frac{l^{sn} \cdot |S_n|_l}{l^s} = l^{s(n-1)} \cdot |S_n|_l = |SL_n(\F_{q^2})|_l
\end{align*}

Recall that $SU(n,q^2) = \{M \in U(n,q^2) : \det(M) = 1\}$ and $SL_n(\F_{q^2}) = \{M \in GL_n(\F_{q^2}) : \det(M) = 1\}$. Therefore, since the Sylow $l$-subgroups of $U(n,q^2)$  and $GL_n(\F_{q^2})$ are isomorphic, we can conclude that the Sylow $l$-subgroups of $SU(n,q^2)$ and $SL_n(\F_{q^2})$ are isomorphic.
\qedhere
\end{proof}

\subsection{The Projective Special Unitary Groups}

\begin{theorem}\label{PSUn} Let $p$ be a prime, $q = p^r$, and $l$ a prime with $l \neq 2,p$.  Let $k$ be a field with $\text{char } k \neq l$. Then
$$\ed_k(PSU(n,q^2),l) = \begin{cases} \ed_k(SU(n,q^2),l), &l \nmid n \text{ or } l \nmid q+1\\
\ed_k(PSL_n(\F_{q^2}),l), &l \divides n, \text{ } l \divides q + 1\end{cases}$$
\end{theorem}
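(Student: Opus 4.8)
The plan is to imitate, in order, the two-case structure used for the special unitary groups (Theorem \ref{SUn}) and for $PSL_n(\F_q)$ (Theorem \ref{PSLn}), reducing everything to cases already handled. First suppose $l \nmid n$ or $l \nmid q+1$. By Grove, $Z(SU(n,q^2))$ is cyclic of order $(n,q+1)$, so under this hypothesis $l \nmid |Z(SU(n,q^2))|$ and hence $|PSU(n,q^2)|_l = |SU(n,q^2)|_l$. Since $PSU(n,q^2)$ is a quotient of $SU(n,q^2)$, a Sylow $l$-subgroup of $SU(n,q^2)$ maps isomorphically onto a Sylow $l$-subgroup of $PSU(n,q^2)$, and Lemma \ref{Lempsyl} gives $\ed_k(PSU(n,q^2),l) = \ed_k(SU(n,q^2),l)$, which is the asserted value.

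Now assume $l \divides n$ and $l \divides q+1$; since $l \neq 2$ this forces $l \nmid q-1$, so $\nu_l(q^2-1) = \nu_l(q+1) =: s$, and over $\F_{q^2}$ the relevant parameter ``$d$'' equals $1$ and ``$n_0$'' equals $n$. Put $t = \nu_l(n)$. The key observation is that a primitive $l^s$-th root of unity $\epsilon \in \F_{q^2}^\times$ (which exists because $l^s \divides q^2-1$) automatically satisfies $\epsilon^{q+1} = 1$, i.e. $\epsilon\overline{\epsilon} = \epsilon^{q+1} = 1$, so $\epsilon$ is a norm-one scalar. Consequently (taking $\beta = \Id$) all the scalar and diagonal matrices that appeared in the $SL_n$/$PSL_n$ constructions actually lie in $SU(n,q^2)$: the diagonal matrices $E_1,\dots,E_n$ from the $SL_n(\F_{q^2})$ section are unitary and have determinant $1$, and the central scalars coincide with those in the $PSL_n$ case, namely $\epsilon\Id_n \in Z(SU(n,q^2))$ when $s \leq t$ and $\epsilon^{l^{s-t}}\Id_n \in Z(SU(n,q^2))$ when $s > t$.

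I would then transcribe the $PSL_n(\F_{q^2})$ argument verbatim. Since $l$ is odd, $P_l(S_n)$ consists of even permutations, so it embeds into $SU(n,q^2)$ via permutation matrices. In $PSU(n,q^2) = SU(n,q^2)/Z(SU(n,q^2))$ the relations $E_n = \prod_{i=1}^{n-1} E_i^{l^s-1}$ and, according as $s \leq t$ or $s > t$, either $E_{n-1} = \prod_{i=1}^{n-2} E_i^{i}$ or $E_{n-1}^{l^{s-t}} = \prod_{i=1}^{n-2} E_i^{i l^{s-t}}$ hold exactly as in the projective special linear case, because they only involve the scalar central elements identified above. Hence $\langle E_1,\dots,E_{n-1}\rangle \rtimes P_l(S_n)$ is isomorphic to the Sylow $l$-subgroup of $PSL_n(\F_{q^2})$; an order count using $|PSU(n,q^2)| = |SU(n,q^2)|/(n,q+1)$, Theorem \ref{SUn}, and the identity $\nu_l\big((n,q^2-1)\big) = \min(s,t) = \nu_l\big((n,q+1)\big)$ shows it has order $|PSU(n,q^2)|_l$, so it is a full Sylow $l$-subgroup. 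Lemma \ref{Lempsyl} then yields $\ed_k(PSU(n,q^2),l) = \ed_k(PSL_n(\F_{q^2}),l)$.

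The step I expect to require the most care is the last one: checking that the norm-one property of $\epsilon$ really makes the matrix identities in $SU(n,q^2)$ literally the same as those in $SL_n(\F_{q^2})$, and matching the center-order bookkeeping (the $\min(s,t)$ coming from $\nu_l((n,q+1))$ versus from $\nu_l((n,q^2-1))$) so that the constructed subgroup is Sylow on the nose rather than merely correct up to the $l$-part of the center. Once that is in place, the classification of irreducibles and the upper/lower bound arguments are identical to those in the proof of Theorem \ref{PSLn}, and the numerical value is inherited from there.
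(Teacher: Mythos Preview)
Your proposal is correct and follows the same route as the paper: dispose of the case $l\nmid(n,q+1)$ by an index argument, and in the remaining case show that a Sylow $l$-subgroup of $PSU(n,q^2)$ is isomorphic to one of $PSL_n(\F_{q^2})$, then invoke Lemma~\ref{Lempsyl}. The paper argues this last isomorphism abstractly (both groups arise from $SU$ and $SL_n(\F_{q^2})$, whose Sylow $l$-subgroups were already identified, by quotienting out central cyclic subgroups with the same $l$-part), whereas you carry out the explicit matrix construction and verify that the norm-one condition $\epsilon\overline{\epsilon}=1$ makes the $PSL_n$ generators literally unitary; your version is slightly more detailed but not a different argument.
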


\begin{proof}
By Grove (Corollary 11.29),
$$|PSU(n,q^2)| = \frac{|SU(n,q^2)|}{(n,q+1)}.$$

If $l \nmid n$ or $l \nmid q + 1$, then the Sylow $l$-subgroups of $PSU(n,q^2)$ are isomorphic to the Sylow $l$-subgroups of $SU(n,q^2)$. So we need only prove the case when $l \divides n, \text{ }l \divides q + 1$. Thus in this section, we will assume $l \divides n, \text{ } l \divides q + 1$. As before, this implies that $l \nmid q - 1$, $l \divides q^2 - 1$, and $v_l(q^2-1) = v_l(q+1)$. 

By Grove (\cite{Gr}, Proposition 1.1),
$$|PSL_n(\F_{q^2})| = \frac{|SL_n(\F_{q^2})|}{(n,q^2-1)}.$$
Thus 
$$|PSL_n(\F_{q^2})|_l = \frac{|SL_n(\F_{q^2})|_l}{l^{\min(v_l(n),v_l(q^2-1)}} = \frac{|SL_n(\F_{q^2})|_l}{l^{\min(v_l(n),v_l(q+1)}}.$$
By the work for the special unitary groups, we know that $|SL_n(\F_{q^2})|_l = |SU(n,q^2)|_l$. So 
 \begin{align*}
|PSU(n,q^2)|_l &= \frac{|SU(n,q^2)|_l}{l^{min(v_l(n), v_l(q+1))}} = \frac{|SL_n(\F_{q^2})|_l}{l^{min(v_l(n), v_l(q+1))}} = |PSL_n(\F_{q^2})|_l
\end{align*}

Since $PSU(n,q^2)$ and $PSL_n(\F_{q^2}))$ are obtained from $SU(n,q^2)$ and $SL_n(\F_{q^2})$ respectively by modding out by a cyclic group of order $l^{\min(s,t)}$ and the Sylow $l$-subgroups of $SU(n,q^2)$ and $SL_n(\F_{q^2})$ are isomorphic, we can conclude that the Sylow $l$-subgroups of $PSU(n,q^2)$ and $PSL_n(\F_{q^2})$ are isomorphic.
\qedhere 
\end{proof}

\newpage

\section{Appendix}

In this appendix, we provide some details for the computations in this article.

\subsection{Proof of Lemma \ref{Pl(Sn)}}\label{App2}

\begin{lma}[\ref{Pl(Sn)}]
Let $\sigma_{i}^{j}$ be the permutation which permutes the $i$th set of $l$ blocks of size $l^{j-1}$. Then 
$$\langle \{\sigma_i^j\}_{1 \leq j \leq \mu_l(n), 1 \leq i \leq \lfloor \frac{n}{l^{j}} \rfloor} \rangle \in \syl_l(S_n).$$ Let $P_l(S_n)$ denote this particular Sylow $l$-subgroup of $S_n$.
\end{lma}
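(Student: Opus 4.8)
The plan is to prove this by induction on $n$, exhibiting $P_l(S_n)$ as an iterated wreath product assembled one level at a time; the Sylow property then falls out of a count of orders. First I would record Legendre's formula, $|S_n|_l = l^{\nu_l(n!)}$ with $\nu_l(n!) = \sum_{j\ge 1}\lfloor n/l^j\rfloor = \sum_{j=1}^{\mu_l(n)}\lfloor n/l^j\rfloor$, together with the recursion $\nu_l(n!) = \lfloor n/l\rfloor + \nu_l(\lfloor n/l\rfloor!)$, which follows from $\lfloor \lfloor n/l\rfloor/l^{j}\rfloor = \lfloor n/l^{j+1}\rfloor$. Since any subgroup of $S_n$ of $l$-power order is automatically an $l$-subgroup, it suffices to prove that $P_l(S_n) := \langle \{\sigma_i^j\}\rangle$ has order exactly $l^{\nu_l(n!)}$.

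The core step is a structural recursion. For $n \ge l$, partition $\{1,\dots,n\}$ into the $\lfloor n/l\rfloor$ consecutive blocks of size $l$ (with at most $l-1$ leftover points, which every $\sigma_i^j$ fixes). Each $\sigma_i^j$ is supported on $l^j$ consecutive integers aligned to a multiple of $l^j$ and permutes them ``uniformly'': for $j=1$ it is an $l$-cycle on a single block, while for $j\ge 2$ it merely permutes these size-$l$ blocks bodily, preserving the order within each block. Consequently $N := \langle \sigma_1^1,\dots,\sigma^1_{\lfloor n/l\rfloor}\rangle$ is elementary abelian of rank $\lfloor n/l\rfloor$ (its generators have disjoint supports and order $l$) and is normal in $P_l(S_n)$, since conjugating the $l$-cycle $\sigma_i^1$ by any other generator again yields an $l$-cycle of the form $\sigma_{i'}^{1}$. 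Meanwhile $H := \langle \sigma_i^j : j\ge 2\rangle$ consists of block-uniform permutations, so its action on the $\lfloor n/l\rfloor$ blocks gives a homomorphism $H\to S_{\lfloor n/l\rfloor}$ which is injective (a block-uniform permutation fixing every block setwise is trivial) and which, using $\mu_l(\lfloor n/l\rfloor)=\mu_l(n)-1$ and $\lfloor\lfloor n/l\rfloor/l^{j-1}\rfloor=\lfloor n/l^{j}\rfloor$, sends the generating set $\{\sigma_i^j : 2\le j\le \mu_l(n)\}$ bijectively onto the defining generating set $\{\sigma_i^{j-1}\}$ of $P_l(S_{\lfloor n/l\rfloor})$; hence $H\cong P_l(S_{\lfloor n/l\rfloor})$. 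Since $N$ lies in the kernel of the block action we get $N\cap H=1$, and $N$ and $H$ together contain all the generators, so $P_l(S_n)=N\rtimes H\cong (\Z/l\Z)\wr P_l(S_{\lfloor n/l\rfloor})$ (base group $\Z/l\Z$, top group acting on $\lfloor n/l\rfloor$ points).

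Finally, the base case $P_l(S_n)=1$ for $n<l$ together with the identity $|P_l(S_n)| = l^{\lfloor n/l\rfloor}\,|P_l(S_{\lfloor n/l\rfloor})|$ gives, by induction, $|P_l(S_n)| = l^{\sum_{j\ge 1}\lfloor n/l^j\rfloor} = |S_n|_l$, so $P_l(S_n)\in\syl_l(S_n)$ as claimed.

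I expect the only real obstacle to be bookkeeping: one must set up the ``uniform shift'' formalism precisely enough that the normality of $N$, the injectivity of $H\to S_{\lfloor n/l\rfloor}$, and the matching of the multi-indexed generating sets all go through cleanly. None of this is deep, but the nested indices on the $\sigma_i^j$ make it easy to slip, so the write-up should fix once and for all the convention that each $\sigma_i^j$ is the cyclic ``block shift'' on its support and record the two floor identities above explicitly before invoking them.
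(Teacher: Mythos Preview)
Your proposal is correct and follows essentially the same inductive approach as the paper: both peel off the bottom layer $N=\langle\sigma_1^1,\dots,\sigma_{\lfloor n/l\rfloor}^1\rangle\cong(\Z/l\Z)^{\lfloor n/l\rfloor}$, identify the remaining generators with $P_l(S_{\lfloor n/l\rfloor})$ acting on blocks, and conclude via the order recursion $\nu_l(n!)=\lfloor n/l\rfloor+\nu_l(\lfloor n/l\rfloor!)$. Your write-up is in fact somewhat more careful than the paper's about verifying normality of $N$, the injectivity of the block-action map on $H$, and the matching of generating sets, which is all to the good.
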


\begin{proof}\footnote{See \cite{MeyReich}, Corollary 4.2} Let $n' = \lfloor \frac{n}{l} \rfloor$, and let
$$\sigma_1^1 = (1, \cdots, l), \cdots, \sigma_{n'}^1 = ((n' - 1)l + 1, \cdots , n' l).$$

\noindent \textbf{Base Case:} If $n' = 1$, then $n = l + k$ for $k < l$.  Thus the only factor of $n!$ divisible by $l$ is $l$, so we have $|S_n|_l = l$, and $P_l(S_n) = (\Z/l\Z) \in \syl_l(S_n)$ (generated by $\sigma_1^1 = (1, \cdots, l)$).

\noindent \textbf{Induction Step:}

Let $D \cong (\Z/l\Z)^{n'}$.  Then $S_{n'}$ acts on $D$ by permuting the $\sigma_{i}^{1}$. And $D \rtimes S_{n'}$ embeds into $S_n$. Write $n = ln' + *$ for $* < l$; then 
\begin{align*}
v_l(n!) &= v_l((ln' + *)!)\\
 &= v_l((ln')!)\\
&= \sum_{i=1}^{ln'} v_l(i)\\
&= \sum_{i=1}^{n'} v_l(li)\\
&= \sum_{i=1}^{n'}  1 + \sum_{i=1}^{n'} v_l(i) \\
&= n' + v_l(n'!)\\
&= v_l(|D|) + v_l(|S_{n'}|)\\
&= v_l(|D \rtimes S_{n'}|)
\end{align*} Thus $D \rtimes S_{n'}$ embeds into $S_n$ with index prime to $l$.  Therefore, $P_l(S_n) \cong D \rtimes P_l(S_{n'}) \in \syl_l(S_n).$ 

Let $\mu_l(n)$ be the highest power of $l$ such that $\lfloor \frac{n}{l^{\mu_l(n)}} \rfloor > 0$. Let
{\small \begin{align*}
\sigma_1^2 &= (1,l+1,\cdots, l(l-1) + 1)\\
&\cdots\\
&\sigma_{\lfloor \frac{n}{l^2} \rfloor}^2 = (l^2(\lfloor \frac{n}{l^2} \rfloor - 1)+1, l^2(\lfloor \frac{n}{l^2} - 1) + l + 1), \cdots, l^2\lfloor \frac{n}{l^2}\rfloor - l + 1)\\
&\vdots\\
\sigma_1^{\mu_l(n)} &= (1,l^{\mu_l(n)-1}+1, \cdots, l^{\mu_l(n)-1}(l-1) + 1),\\\
&\cdots\\
 &\sigma_{\lfloor \frac{n}{l^{\mu_l(n)}} \rfloor}^{\mu_l(n)} = (l^{\mu_l(n)}(\lfloor \frac{n}{l^{\mu_l(n)}} \rfloor  - 1) + 1, (l^{\mu_l(n)}(\lfloor \frac{n}{l^{\mu_l(n)}} \rfloor - 1) + l^{\mu_l(n)-1} + 1,\\
 &\qquad \qquad \cdots, l^{\mu_l(n)}\lfloor \frac{n}{l^{\mu_l(n)}} \rfloor - l^{\mu_l(n) - 1}+1)
\end{align*}}
Then $P_l(S_n)$ is generated by $\{\sigma_{i}^{j}\}$. And for $j_0$ fixed $\{\sigma_{i}^{j_0}\}$ generates a subgroup of order $(\Z/l\Z)^{\lfloor \frac{n}{l^{j_0}} \rfloor}$. $\sigma_{i}^{j}$ permutes the $i$th set of $l$ blocks of size $l^{j-1}$.
\qedhere
\end{proof}

\subsection{Remark \ref{remark1}}\label{App1} 

Duncan and Reichstein calculated the essential $p$-dimension of the pseudo-reflection groups: For $G$ a pseudo-reflection group with $k[V]^G = k[f_1,\cdots,f_n]$, $d_i = \text{deg}(f_i),$ $\ed_k(G,p) = a(p) = |\{i : d_i \text{ is divisible by } p\}|$ (\cite{DR}, Theorem 1.1). These groups overlap with the groups above in a few small cases (The values of $d_i$ are in \cite{ST}, Table VII):  
\begin{enumerate}[(i)]
\item Group 23 in the Shephard-Todd classification, $W(H_3) \cong \Z/2\Z \times PSL_2(\F_5)$: $d_1, \ldots, d_3$ are $2,6,10$; so 
$$\ed_k(W(H_3),3) = 1 = \ed_k(PSL_2(\F_5),3).$$

\item Group 24 in the Shephard-Todd classification, $W(J_3(4)) \cong \Z/2\Z \times PSL_2(5)$: $d_1, \ldots, d_3$ are $4,6,14$; so
$$\ed_k(W(J_3(4)),3) = 1 = \ed_k(PSL_2(5),3)$$
and 
$$\ed_k(W(J_3(4)),7) = 1 = \ed_k(PSL_2(5),7).$$

\item Group 32 in the Shephard-Todd classification, $W(L_4) \cong \Z/3\Z \times Sp(4,3)$: $d_1, \ldots, d_4$ are $12,18,24,30$; so 
$$\ed_k(W(L_4),5) = 1 = \ed_k(Sp(4,3),5).$$

\item Group 33 in the Shephard-Todd classification, $W(K_5) \cong \Z/2\Z \times PSp(4,3) \cong \Z/2\Z \times PSU(4,2^2)$: $d_1, \ldots, d_5$ are $4, 6, 10, 12, 18$; so 
$$\ed_k(W(K_5),5) = 1 = \ed_k(PSp(4,3),5) = \ed_k(PSU(4,2^2),5)$$
and 
$$\ed_k(W(K_5),3) = 3 = \ed_k(PSU(4,2^2),3).$$

\item Group 35 in the Shephard-Todd classification, $W(E_6) \cong O^-(6,2)$: 
$d_1, \ldots, d_6$ are $2,5,6,8,9,12$; so 
$$\ed_k(W(E_6),5) = 1 = \ed_k(O^-(6,2),5).$$
and 
$$\ed_k(W(E_6),3) = 3 = \ed_k(O^-(6,2),3).$$
%

\item Group 36 in the Shephard-Todd classification, $W(E_7) \cong \Z/2\Z \times Sp(6,2)$:\\
$d_1, \ldots, d_7$ are $2,6,8,10,12,14,18$; so 
$$\ed_k(W(E_7),5) = 1 = \ed_k(Sp(6,2),5),$$
$$\ed_k(W(E_7),3) = 3 = \ed_k(Sp(6,2),3),$$
and
$$\ed_k(W(E_7),7) = 1 = \ed_k(Sp(6,2),7).$$

\item Group 37 in the Shephard-Todd classification, $W(E_8)$ contains $O^+(8,2)$ as in index 2 subgroup: $d_1, \ldots, d_8$ are $2,8,12,14,18,20,24,$ $30$; so
$$\ed_k(W(E_8),3) = 4 = \ed_k(O^+(8,2),3),$$
$$\ed_k(W(E_8),5) = 2 = \ed_k(O^+(8,2),5),$$
and
$$\ed_k(W(E_8),7) = 1 = \ed_k(O^+(8,2),3).$$

\end{enumerate}

\bibliographystyle{plain}
\bibliography{references}

\end{document}